\def\titlerunning#1{\gdef\titrun{#1}}
\def\author#1{\gdef\autrun{\def\and{\unskip, }#1}\gdef\@author{#1}}
\def\address#1{{\def\and{\\\hspace*{18pt}}\renewcommand{\thefootnote}{}%
\footnote {#1}}%
\markboth{\autrun}{\titrun}}
\def\email#1{e-mail: #1}
\def\subjclass#1{{\renewcommand{\thefootnote}{}%
\footnote{\emph{Mathematics Subject Classification (2010):} #1}}}
\def\keywords#1{\par\medskip
\noindent\textbf{Keywords.} #1}
\theoremstyle{plain}
\newtheorem{theoreme}{Théorème}[section]
\newtheorem{corollaire}[theoreme]{Corollaire}
\newtheorem{lemme}[theoreme]{Lemme}
\newtheorem{proposition}[theoreme]{Proposition}
\theoremstyle{definition}
\newtheorem*{merci}{Remerciements}
\theoremstyle{remark}
\newtheorem*{remarque}{Remarque}
\newtheorem*{remarques}{Remarques}
\newcommand\numberthis{\stepcounter{equation}\tag{\theequation}}
\let\Re\relax
\DeclareMathOperator{\cond}{cond}
\DeclareMathOperator{\Re}{Re}
\renewcommand{\bar}{\overline}
\newcommand{\ssum}[1]{\sum_{\substack{#1}}}
\newcommand{\ssumm}[1]{\underset{\substack{#1}}{\sum\sum}}
\newcommand{\e}{{\rm e}}
\newcommand{\dd}{{\rm d}}
\newcommand{\ca}{{\mathfrak a}}
\newcommand{\cb}{{\mathfrak b}}
\renewcommand{\hat}{\widehat}
\newcommand{\ee}{{\varepsilon}}
\newcommand{\rhoQ}{{\rho}}
\newcommand{\rhod}{{\varrho}}
\newcommand{\NN}{{\mathbb N}}
\newcommand{\CC}{{\mathbb C}}
\newcommand{\RR}{{\mathbb R}}
\newcommand{\ZZ}{{\mathbb Z}}
\newcommand{\PP}{{\mathbb P}}
\newcommand{\1}{{\mathbf 1}}
\newcommand{\cC}{{\mathcal C}}
\newcommand{\cD}{{\mathcal D}}
\newcommand{\sQ}{{\mathscr Q}}
\newcommand{\cI}{{\mathcal I}}
\newcommand{\cL}{{\mathcal L}}
\newcommand{\cP}{{\mathscr P}}
\newcommand{\cS}{{\mathcal S}}
\newcommand{\cB}{{\mathscr B}}
\newcommand{\cM}{{\mathscr M}}
\newcommand{\cE}{{\mathscr E}}
\newcommand{\cH}{{\mathscr H}}
\newcommand{\reg}{{\text{rég}}}
\newcommand{\exc}{{\text{exc}}}
\newcommand{\Mrg}{\cM^\reg}
\newcommand{\Mex}{\cM^\exc}
\newcommand{\ud}{{\frac12}}
\newcommand{\tsigma}{{\tilde\sigma}}
\newcommand{\hV}{{\widehat V}}
\newcommand{\tQ}{{\tilde Q}}
\newcommand{\mo}{\mathopen}
\newcommand{\mc}{\mathclose}
\DeclareMathOperator{\card}{card}
\DeclareMathOperator{\supp}{supp}
\newcommand{\sumb}[1]{\mathop{\underset{#1}{\sum}^{\flat}}}
\newcommand{\vphi}{{\varphi}}
\renewcommand{\tilde}{\widetilde}
\newcommand{\avoidbreak}{\postdisplaypenalty=100}
\renewcommand{\mod}[1]{\ ({\rm mod\ }{#1})}
\numberwithin{equation}{section}
\begin{document}


\baselineskip=17pt


\titlerunning{Équirépartition des polynômes quadratiques et valeurs friables de polynômes}

\title{Niveau de répartition des polynômes quadratiques \\ et crible majorant pour les entiers friables}

\author{R. de la Bretèche \and S. Drappeau}

\date{\today}

\maketitle

\address{R. de la Bretèche: Institut de Mathématiques de Jussieu-Paris Rive Gauche, 
Université  Paris Diderot, Sorbonne Paris Cité, UMR 7586, Bâtiment Sophie Germain, 
Case Postale 7012, F-75251 Paris CEDEX 13, France; \email{regis.delabreteche@imj-prg.fr}
\and
S. Drappeau: Aix Marseille Université, CNRS, Centrale Marseille, I2M UMR 7373, 13453 Marseille, France; \email{sary-aurelien.drappeau@univ-amu.fr}}

\subjclass{Primary 11N32; Secondary 11B25, 11L05, 11N75}


\begin{abstract}
We obtain new estimates on the level of distribution of the set~$\{Q(n)\}$ where~$Q\in\ZZ[X]$ is irreducible quadratic, for well-factorable moduli, improving a result due to Iwaniec. As a by-product of our arguments, we study the Chebyshev problem of estimating~$\max\{P^+(n^2-D), n\leq x\}$ and make explicit, in Deshouillers-Iwaniec's state-of-the-art result, the dependence on the Selberg eigenvalue conjecture.

Combined with the construction of an upper-bound sieve for numbers free of large factors, we obtain new upper bounds for the quantity~$\Psi_Q(x, y) = |\{n\leq x: p\mid Q(n)\Rightarrow p\leq y\}|$ for~$Q\in\ZZ[X]$ linear or quadratic.

\keywords{polynomial values, quadratic congruences, arithmetic progressions, friable numbers}
\end{abstract}

\section{Introduction}

\subsection{Niveau de répartition de suites polynomiales et problème de Tchebychev}

Soit~$Q$ un polynôme à coefficients entiers. La question d'estimer les ``sommes de congruence''~$A_d(x) := \left|\{n\in \ZZ\cap[1,x], Q(n)\equiv 0\mod{d}\}\right|$, lorsque~$x\to+\infty$, le plus uniformément possible par rapport à l'entier~$d\geq 1$, est au cœur de la théorie multiplicative des nombres. Toute réponse partielle à cette question permet, en conjonction avec les méthodes de cribles~\cite{HR,FI-OC}, d'approcher la fréquence avec laquelle~$Q$ prend des valeurs sous une contraine multiplicative~: par exemples des valeurs premières~\cite[Theorem~2.6]{HR}, ou ayant un grand facteur premier~\cite{Tenenbaum-ES2}, ou encore n'ayant que des petits facteurs premiers, cas sur lequel nous nous concentrerons ci-dessous.

Dans le présent travail, nous nous intéressons à ce problème lorsque~$Q$ est quadratique, dans le cas particulier important des modules~$d$ pondérés par un poids ``bien factorisable'' au sens de la théorie du crible linéaire, qui a notamment été étudié dans~\cite{Iwaniec}.

\begin{theoreme}\label{th:exporep-quad-intro}
Soient~$\eta>0$,~$x\geq 1$, $Q\leq x^{1+25/178-\eta}$, et~$(\lambda(q))$ une fonction arithmétique bornée et ``bien factorisable'' au sens de~\cite[page~199]{FoIw}. Soit~$D\in\ZZ$ qui n'est pas un carré d'entier, et~$V:\RR\to\CC$ une fonction lisse à support compact inclus dans~$\RR_+^\ast$. Alors
\begin{equation}
\begin{aligned}
\sum_{q\leq Q} \lambda(q) \Big(\ssum{n\in\NN \\ q|n^2-D} V\Big(\frac nx\Big) - x{\hat V}(0) \frac{\rhoQ(q)}{q}\Big) \ll_{\eta, V, D} x^{1-\eta/3}
\end{aligned} \label{eq:exporep-quad-intro}
\end{equation}
avec~${\hat V}(0) = \int_\RR V(t)\dd t$ et~$\rhoQ(q)=|\{\nu\mod{q}, \nu^2\equiv D\mod{q}\}|$.
\end{theoreme}

Le gain~$25/178 = 1/7,\!12$ améliore le gain correspondant~$1/15$ de~\cite{Iwaniec} (voir aussi~\cite{RJLO}), ainsi que le gain~$1/9$ qui découlerait d'une utilisation de la conjecture~$R^\ast$ de Hooley sur les sommes d'exponentielles incomplètes (\textit{cf.}~\cite[page~185]{Iwaniec}). Pour obtenir cette amélioration, nous faisons appel à des majorations de type ``grand crible'' sur les coefficients de Fourier des formes cuspidales de~$GL_2$, adaptant un argument de T\'o{th}~\cite{Toth}, en utilisant une récente extension~\cite{D-Kuz} par le second auteur des majorations de sommes d'exponentielles de Deshouillers-Iwaniec~\cite{DI}.

Les méthodes qui sous-tendent le Théorème~\ref{th:exporep-quad-intro} s'appliquent naturellement au problème de Tchebychev de minorer la fonction~$P_D(x) = P^+(\prod_{x<n\leq 2x}(n^2-D))$, pour l'historique duquel nous référons à~\cite{Hooley-Pplus} (voir aussi~\cite{Dartyge}). Le dernier résultat en date sur cette question précise, du à Deshouillers et Iwaniec~\cite{DI-Pplus}, implique en particulier que
$$ P_D(x) \geq x^{1,2024} $$
pour tout~$x$ suffisamment grand. D'un autre côté, la conjecture de Selberg sur les valeurs propres du Laplacien hyperbolique (\textit{cf.} \cite{DI}, section 1.3) permettrait d'obtenir, pour tout~$\ee>0$ suffisamment petit, la valeur $\sqrt{3/2}-\ee \geq 1,2247$ en exposant. Nos résultats permettent de rendre explicite, dans les arguments de Deshouillers-Iwaniec~\cite{DI-Pplus}, la dépendance vis-à-vis de la conjecture de Selberg.
\begin{theoreme}\label{th:DI-P}
Pour~$\theta\in[0, 1/4]$, définissons~$\kappa(\theta)\in[1, 2]$ comme l'unique réel satisfaisant
$$ \int_1^{\kappa(\theta)} \frac{t\dd t}{1-2\theta t} = \frac1{4(1-2\theta)}. $$
Pour tout~$\ee>0$ et~$D\in\ZZ$ qui n'est pas un carré d'entier, nous avons
$$ P_D(x) \gg_{\ee, D} x^{\kappa(\theta)-\ee} $$
pour tout~$\theta\geq0$ qui est admissible pour la conjecture de Ramanujan-Selberg. En particulier,~$\theta=7/64$ convient~\cite{Kim}; nous avons donc pour~$x$ suffisamment grand
\begin{equation}
P_D(x) \geq x^{1,2182}.\label{eq:Pplus-incond}
\end{equation}
\end{theoreme}

\subsection{Crible majorant pour les entiers friables}

Notre application principale qui motive le Théorème~\ref{th:exporep-quad-intro} concerne la majoration de la fréquence avec laquelle~$Q$ prend des valeurs sans grand facteur premier. Nous ne supposerons plus nécessairement que~$Q$ est quadratique. Dans le présent travail, nous améliorons les résultats connus dans le cas où~$Q$ est linéaire ou produit de deux facteurs linéaires, et nous obtenons les premiers résultats non triviaux sur cette question lorsque~$Q$ est quadratique irréductible.

On dit qu'un entier $n$ est $y$-friable si, et seulement si, son plus grand facteur premier noté $P^+(n)$ est $\leq y$. Nous adopterons la convention $P^+(1)=1$.  
Les travaux d'Hildebrand et Tenenbaum~\cite{HT} complétés par ceux de Saias~\cite{Saias} permettent d'évaluer asymptotiquement $\Psi(x,y):=\card\{ S(x,y)\}$ où $S(x,y):=\{ n\leq x\,:\, P^+(n)\leq y\}$ dans un large domaine en $y$.

Nous souhaitons estimer la densité des entiers $y$-friables dans la suite $\{ Q(n)\}$. Malheureusement seul le cas des polynômes de degré $1$ a été pour l'instant complètement résolu. Nous pourrons nous reporter à~\cite{FT,LB98} pour l'uniformité en fonction des coefficients.
Dans le cas du degré $\geq 2$, même une majoration du bon ordre de grandeur semble intéressant et permettrait d'importantes applications.

Lorsque  $Q$ est un polynôme de $\ZZ[X]$, posons $$\Psi_Q(x,y):=\card\{ n\leq x\,:\, P^+(Q(n))\leq y\}.$$
Puisque l'on peut trivialement se ramener au cas des polynômes sans facteur carré, nous factorisons
\begin{equation}\label{factorisationQ} 
àQ(X):=\prod_{j=1}^r Q_j(X)
\end{equation}
où $Q_j$ sont des polynômes irréductibles non proportionnels de $\ZZ[X]$ de degré $d_j$. Martin~\cite{Martin} a conjecturé l'équivalent asymptotique
$$\Psi_Q(x,y)\sim   x\prod_{j=1}^r \frac{\Psi (x^{d_j},y)}{x^{d_j}}$$ dans un large domaine en $y$.
En désignant par $\rhod$ la fonction de Dickmann et en utilisant~\cite{Hild86},  lorsque $y\geq \exp\{ (\log\log x)^{3/5+\varepsilon}\} $, nous espérons donc
$$\Psi_Q(x,y)\sim   x\prod_{j=1}^r \rhod(d_ju)$$
où $u:=(\log x)/(\log y)$. 

Lorsque~$(d_1,r)\neq (1,1)$, cette relation n'a été établie que certains cas, et lorsque~$u$ est restreint à de petits intervalles bornés. Nous renvoyons à la section 4.3 du survol~\cite{Granville}.

C'est donc à l'aune de cette prévision que l'on pourra mesurer la qualité des majorations que nous établirons.
Ainsi, nous conjecturons 
\begin{equation}\label{eq:psiQ-conj}
\Psi_Q(x,y)=x\rho(u)^{\sum_{j=1}^r d_j+o(1)}
\end{equation}
lorsque $u$ tend vers l'infini dans un large domaine en $y$.

Lorsque $Q$ est un polynôme en plusieurs variables, il est possible d'obtenir certaines estimations asymptotiques et nous renvoyons le lecteur intéressé aux articles de~\cite{BBDT,Lachand2, Lachand3}. Dans le cas d'un polynôme univarié, qui nous intéresse ici, des minorations du bon ordre de grandeur~$\Psi_Q(x, y)\gg x$ sont établies par Dartyge, Martin et Tenenbaum~\cite{DMT} pour des petites valeurs de~$u$. Dans le cas particulier d'un produit de deux formes linéaires, Hildebrand~\cite{Hild-Balogs} obtient une minoration du bon ordre de grandeur; nous renvoyons à~\cite{BET} pour une minoration effective. Enfin, lorsque~$Q$ est un produit quelconque de formes linéaires, une minoration qualitative mais dépendant explicitement de~$u$ est obtenue dans~\cite{BW}.

L'objectif du présent travail est de montrer des majorations de~$\Psi_Q(x, y)$ qui approchent aussi près que possible la taille conjecturée~\eqref{eq:psiQ-conj}. Une conséquence de nos résultats simple à énoncer  est la suivante.

\begin{theoreme}\label{th1simple} Soit $Q$ un polynôme de $\ZZ[X]$ fixé avec la factorisation \eqref{factorisationQ} et $d_1\leq d_2\leq \cdots\leq d_r$. Soit~$\psi:\RR_+\to\RR_+$ telle que~$\lim_{x\to+\infty}\psi(x)=+\infty$. Nous avons
$$\Psi_Q(x,y)\ll x\rhod(u)^{c_Q+o(1)}$$
lorsque~$x$ tend vers l'infini,~$(\log x)^{\psi(x)} \leq y\leq x$ et
$$ \def\arraystretch{1.2}
\begin{array}{rll}
c_Q = {}& \kern-0.7em \tfrac85  & \qquad (d_1=d_2=1,\, r\geq 2), \\
c_Q = {}& \kern-0.7em \tfrac32  & \qquad (d_1=1,\, d_2\geq 2), \\
c_Q = {}& \kern-0.7em 1+\tfrac{25}{178} & \qquad (d_1=2) \\
c_Q = {}& \kern-0.7em 1 & \qquad (d_1\geq 3,~\text{\cite{Khmyrova}}).
\end{array} $$
\end{theoreme}
Nous notons que~$\frac{25}{178} = \frac1{7,\!12} \approx 0,\!1404$. Le dernier cas permet de retrouver un résultat de Khmyrova~\cite{Khmyrova}, précisé ultérieurement par Timofeev~\cite{Timofeev}. L'estimation du Théorème~\ref{th1simple} dans le cas~$d_1 = 2$ est, à notre connaissance, la première faisant intervenir une valeur~$c_Q>1$.

Chacune de ces majorations repose sur une inégalité essentiellement du type
\begin{equation}
1_{P^+(n)\leq y}(n)\leq \sum_{\substack{d\mid n\\ P^+(d)\leq y\\ R/y<d\leq R }}1\label{eq:majo-basique-crible}
\end{equation}
pour tout $R\geq n$. Le choix du paramètre $R$ doit être optimisé en fonction des résultats d'équirépartition à notre disposition. Dans le premier cas, il s'agira d'une rapide adaptation des récents travaux du second auteur, dans le deuxième, du théorème de Harper~\cite{Harper} de type Bombieri--Vinogradov pour les entiers friables. Dans le cas quadratique, nous utiliserons une variante du Théorème~\ref{th:exporep-quad-intro}. Dans le dernier cas, nous choisirons $R=x$ sans pouvoir utiliser de résultats spécifiques d'équirépartition.

Notons aussi que d'après~\cite{GY-Sunits} (voir également~\cite{Schinzel}), un polynôme~$Q$ avec la factorisation~\eqref{factorisationQ} et de degré au moins~$2$, ne peut pas prendre des valeurs trop friables~: en effet, il découle du corollaire~4 de~\cite{GY-Sunits} que
$P^+(Q(n))\gg ( \log_2n)(\log_3n)/\log_4n$
lorsque $n$ tend vers l'infini\footnote{Ici $\log_k$ désigne la $k$-ième itérée du logarithme.}. Ainsi, par exemple, la $y$-friabilité de $n$ et celle de $n+1$ ne sont pas indépendantes lorsque $y$ prend des très petites valeurs par rapport à la taille de $n$.

À titre d'illustration de l'efficacité de notre méthode dans le cas de deux facteurs linéaires, nous appliquons ces estimations à l'étude de l'ensemble~$\cC$ des entiers divisibles par le carré de leur plus grand facteur premier,
$$ \cC = \{n\geq 1:\ P^+(n)^2 | n\}. $$
Il est aisé d'établir que lorsque~$x$ tend vers l'infini,~$|\cC \cap [1, x]|  = x\e^{-(1+o(1))\sqrt{2(\log x)\log\log x}}. $ 
L'ensemble des entiers~$n$ tels que~$(n, n+1)\in\cC^2$ est en revanche beaucoup plus délicat à étudier. Nous établissons la majoration suivante, qui améliore~\cite{DKDL}.
\begin{corollaire}\label{cor:dkdl}
Lorsque~$x$ tend vers l'infini, l'estimation
\begin{equation}
|\{n\leq x:\ (n, n+1)\in\cC^2\}| \leq x  \e^{-(c+o(1))\sqrt{2(\log x)\log\log x}}\label{eq:majo-DKDL}
\end{equation}
a lieu avec~$c=4/\sqrt{5} \approx 1,\!789 $.
\end{corollaire}

\begin{remarque}
Dans~\cite{DKDL}, les auteurs obtiennent~$c=25/24 \approx 1,\!042$. Conjecturellement, la valeur optimale attendue est~$c=2$. Cela correspond à l'heuristique que les événements~$n\in\cC$ et~$n+1\in\cC$ surviennent de façon statistiquement indépendante.
\end{remarque}

Une autre application concerne la minoration du nombre d'entiers friables dans les petits intervalles établie au Théorème 5 de \cite{G16}. Lorsque $\varepsilon\in \mo{]}0,1/6\mc{[}$, il existe $u_0=u_0(\varepsilon)$ tel que lorsque  $x\geq 2 $, $u_0\leq u\leq (\log x)^{1/6-\varepsilon}$ et $ \rhod(u)^{-(3+\varepsilon)}\leq h\leq \sqrt{x}$, l'on ait
$$ \Psi(x+h\sqrt{x},x^{1/u})-\Psi(x ,x^{1/u})\geq \rhod(u)^2\frac{h\sqrt{x}}{(\log x)^3}. $$
La méthode repose de manière cruciale sur des majorations de cardinal d'ensembles d'entiers~$n$ tels que les valeurs de deux formes affines en~$n$ soient simultanément friables. Les majorations obtenues au Théorème 1.1 avec un exposant $c_Q = 8/5$ dans le cas $r=2$, $d_1=d_2=1$ permettent d'obtenir des résultats intéressants à ce sujet et de remplacer l'exposant $3+ \varepsilon$ par $3/2+\varepsilon$ dans la minoration de $h$ ci-dessus. La valeur conjecturelle $c_Q=2$ fournirait un exposant~$1/2+ \varepsilon$.

\begin{merci}
Les auteurs prennent plaisir à remercier Adam Harper et Gérald Tenenbaum pour des discussions sur le présent travail.
\end{merci}

\subsection*{Notations et plan}

Lorsque~$(a, q)=1$, nous notons~$\bar{a}\mod{q}$ la classe inverse de~$a$ modulo~$q$. Le conducteur d'un caractère de Dirichlet~$\chi$ est noté~$\cond\chi$. Étant donnés~$m, n\in\NN_{>0}$, nous notons~$m \mid n^\infty$ lorsque tous les facteurs premiers de~$m$ divisent~$n$. Ainsi, nous notons~$(m, n^\infty)$ le plus grand diviseur de~$m$ dont tous les facteurs premiers divisent~$n$. La lettre~$\rhod$ désigne la fonction de Dickman, tandis que la notation~$\rhoQ_Q(q)$ pour un polynôme~$Q$ et un entier~$q$ désigne le nombre de racines de~$Q$ modulo~$q$. Nous noterons~$\rho_Q(q)=\rho(q)$ lorsque le contexte sera clair. Dans les section~\ref{sec:kuz}, \ref{sec:final-hfix} et~\ref{sec:final-hmoy} uniquement, la notation~$(\rho_{f,\ca}(n))$ désigne la suite des coefficients de Fourier d'une forme de Maass~$f$. Pour~$k\geq 1$, nous définissons la fonction diviseur généralisée~$\tau_k$ par~$\sum_n \tau_k(n) n^{-s} = \zeta(s)^k$ pour tout~$s>1$. Nous utiliserons l'abbréviation~$\e(x) = \e^{2\pi i x}$ ($x\in\RR$).

Pour tout~$u\geq 1$, nous notons
$$ H(u) := \exp\Big\{\frac{u}{(\log 2u)^2}\Big\}. $$

Pour un ensemble~$\cI\subset\RR_+$, nous notons
$$ \Psi(\cI, y ; q, a) := |\{n\in\cI, P^+(n)\leq y, n\equiv a\mod{q}\}|, $$
$$ \Psi_q(\cI, y) := |\{n\in\cI, P^+(n)\leq y, (n, q)=1\}|, $$
$$ \Psi(\cI, y; \chi) = \sum_{n\in\cI, \ P^+(n)\leq y} \chi(n). $$
Nous utiliserons à plusieurs reprises l'estimation
\begin{equation}
\Psi(x, y) = x\rhod(u) \e^{O_\ee(u)} \qquad ((\log x)^{1+\ee} \leq y \leq x),\label{eq:compar-psi-rho-unif}
\end{equation}
qui découle directement des formules~(2.6) et~(2.7) de~\cite{HT}.

\bigskip{}

Le plan de ce travail est le suivant. Dans la section 2, nous établissons des résultats d'équirépartition des entiers friables en progressions arithmétiques de module friable. Dans la section 3, nous définissons une certaine fonction~$\vartheta(q)$, qui jouera dans notre contexte le rôle d'un crible majorant pour les entiers friables. Dans la section 4, nous considérons le cas~$d_1=1$, où nous mettons en jeu les résultats de la section 2. Dans la section 5, nous considérons le cas quadratique~$d_1=2$, en supposant acquis le Théorème~\ref{th:exporep-quad-intro}. Dans la section 6, nous étudions le cas~$d_1\geq3$. Dans la section 7, nous déduisons le Corollaire~\ref{cor:dkdl}. Enfin, dans la section 8 nous prouvons le Théorème~\ref{th:exporep-quad-intro}.

\section{Équirépartition en progressions arithmétiques}

Les travaux de Harper et du second auteur permettent de démontrer une approximation de~$\Psi(\cI, y_1 ; q, a)$ par~$\Psi_q(\cI, y_1)/\vphi(q)$ en moyenne sur les modules~$q$. En vue de nos applications, nous montrons dans cette section une version de ces résultats qui est restreinte aux modules~$q$ qui sont~$y_2$-friables.

\begin{theoreme}\label{th:equidistrib}
Soient~$\ee, A>0$ et~$k\geq 1$. Il existe~$C, \delta>0$ tels que pour~$x^\ee \leq Q\leq x^{3/5-\ee}$ et $(x, \eta, y_1, y_2)\in\RR^4$ avec~$(\log x)^C \leq y_1, y_2 \leq x$, $y_1\leq x^{1/C}$ et $\eta\in[\e^{-\delta\sqrt{\log y}}, 1]$, et pour tout entier non nul~$q_0\leq x^\delta$ avec~$(q_0, a_1a_2)=1$ et~$P^+(q_0)\leq y_2$, l'on ait
\begin{equation}\label{eq:equidistrib-35}
\begin{aligned}
\ssum{Q/2 < q \leq Q \\ P^+(q) \leq y_2 \\ (q, a_1a_2)=1} \tau_k(q) \Big|\Psi([x, {}& (1+\eta)x], y_1 ; q_0q, a_1\bar{a_2}) - \frac{\Psi_{q_0q}([x, (1+\eta)x], y_1)}{\vphi(q_0q)}\Big| \\ &\ll_{\ee, A, k}  \eta \Psi(x, y_1) \frac{\Psi(Q, y_2)}{\vphi(q_0)Q} \frac{\e^{O_k(u_2)}}{(\log x)^A}
\end{aligned}
\end{equation}
uniformément pour~$0< |a_1|, |a_2| \leq x^\delta$, où l'on a posé~$u_2 = (\log x)/\log y_2$.

De plus, lorsque~$Q\leq x^{1/2-\ee}$, l'on a
\begin{equation}
\label{eq:equidistrib-12}
\begin{aligned}
\ssum{Q/2 < q \leq Q \\ P^+(q) \leq y_2} \tau_k(q)\max_{(a, q_0q)=1}{}& \Big|\Psi([x, (1+\eta)x], y_1 ; q_0q, a) - \frac{\Psi_{q_0q}([x, (1+\eta)x], y_1)}{\vphi(q_0q)}\Big| \\ &\ll_{\ee, A, k}  \eta \Psi(x, y_1) \frac{\Psi(Q, y_2)}{\vphi(q_0)Q} \frac{\e^{O_k(u_2)}}{(\log x)^A}
\end{aligned}
\end{equation}
\end{theoreme}

\begin{proof}
Les arguments de~\cite{Harper}, notamment l'inégalité~(3.1) et les calculs qui la suivent, montrent que pour prouver la borne annoncée, il suffit de montrer que
\begin{equation}\label{eq:somme-chi}
\begin{aligned}
\ssum{Q/2 < q \leq Q \\ P^+(q) \leq y_2}\frac{\tau_k(q)}{\vphi(q_0q)}\ssum{\chi\mod{q_0q}\\1<\cond \chi \leq x^\delta}{}& \big|\Psi([x, (1+\eta)x], y_1 ; \chi)\big| \\ &
\ll_{\ee, A, k} \frac{\eta \Psi(x, y_1) \Psi(Q, y_2)\e^{O_k(u_2)} }{\vphi(q_0)Q (\log x)^A}.
\end{aligned}
\end{equation}
où~$\chi$ désigne un caractère de Dirichlet, et~$\cond\chi$ est le conducteur de~$\chi$. La section 3.3 de l'article de Harper~\cite{Harper} traite du cas~$y_2 = Q$ et~$\eta=1$. Les changements nécessaires pour tenir compte des conditions~$P^+(q)\leq y_2$ sont mentionnés dans~\cite{LV, D-fr}. Nous nous contentons donc d'indiquer que le point essentiel est que pour tout~$r\leq x^\delta$,
\begin{equation}\label{eq:majo-q}
\ssum{Q/2 < q \leq Q \\ P^+(q)\leq y_2 \\ r|q} \frac{\tau_k(q)}{\vphi(q)} \ll (\e^{u_2}\log x)^{O_k(1)}\frac{\tau_k(r)r^{1-\alpha_2}}{\vphi(r)}\frac{\Psi(Q, y_2)}Q.
\end{equation}
Ici,~$\alpha_2 = \alpha(x, y_2)$ et~$u_2 = (\log x)/\log y_2$. Lorsque~$C$ est choisi suffisamment grand, par l'inégalité triangulaire et les calculs de la section~3.3 de~\cite{Harper}, nous obtenons une borne
$$ \ll_\ee \frac{\Psi(x, y_1)\Psi(Q, y_2)}{\vphi(q_0)Q}(\e^{u_2}\log x)^{O_k(1)}\big\{y_1^{-2\delta} + \e^{-2\delta\sqrt{\log x}}\big\} $$
pour la contribution à la somme~\eqref{eq:somme-chi} provenant des caractères dont la série de Dirichlet n'a pas de zéro ``de Siegel'', c'est-à-dire trop proche de~$1$ (voir~\cite{Harper}, section~3.3, définition de~${\mathcal G}_1$ pour la définition exacte). Étant donnée l'hypothèse~$\eta\geq \e^{-\delta\sqrt{\log y}}$, nous concluons que ces caractères fournissent une contribution acceptable.

Il reste alors à traiter la contribution~$\cS_1$ au membre de gauche de~\eqref{eq:somme-chi} des caractères éventuels de conducteurs au plus~$\min\{y^\eta, \e^{\eta\sqrt{\log x}}\}$ dont la série de Dirichlet possède un zéro réel supérieur à~$1-M/\min\{\log y, \sqrt{\log x}\}$ pour des constantes absolues adéquates~$\eta$ et~$M$. Si de tels caractères existent, ils sont tous induits par un unique caractère primitif~$\chi_1$, de conducteur~$r_1\leq \min\{y_1^b, \e^{b\sqrt{\log x}}\}$ (notés respectivement~$\chi_{\text{bad}}^*$ et~$r_{\text{bad}}$ dans~\cite{Harper}), pour un certain réel~$b>0$~:
\begin{equation}
\label{eq:def-S1}
\cS_1 := \ssum{Q/2 < q \leq Q \\ P^+(q) \leq y_2,\ r_1 |q}\frac{\tau_k(q)}{\vphi(q_0q)}\ssum{\chi\mod{q_0q}\\ \chi_1 \text{ induit } \chi } \big|\Psi([x, (1+\eta)x], y_1 ; \chi)\big|.
\end{equation}

Tout d'abord, nous observons que l'inégalité triangulaire permet de nous ramener trivialement au contexte de la section 3.3 de~\cite{Harper}, ce qui fournit la borne
\begin{equation}
\big|\Psi([x, (1+\eta)x], y_1 ; \chi)\big| \ll_\ee \Psi(x, y_1) (\log x)^2\label{eq:majo-Psi-chi1}
\end{equation}
pour~$C$ suffisamment grand en fonction de~$\ee$. Cette borne découle directement de la première équation, page~16 de~\cite{Harper} dans le cas~$x^{1/(\log\log x)^2}\leq y$. Dans le cas complémentaire, nous utilisons l'inégalité de la dernière équation, page~17 de~\cite{Harper} ainsi que
\begin{equation}
\sum_{p|q_0}p^{-\alpha} \leq \sum_{p\leq p_\ast} p^{-\alpha} \ll \log\log p_\ast + p_\ast^{1-\alpha} \qquad (\pi(p_\ast)= \omega(q_0)),\label{eq:majo-euler-p-alpha}
\end{equation}
et la majoration élémentaire~$p_\ast\ll \log q_0 \ll \log x$, qui implique~$p_\ast^{1-\alpha} \ll y_1^{(1-\alpha)/3} \ll u_1^{1/2}$. Les majorations~\eqref{eq:majo-q}, \eqref{eq:majo-Psi-chi1} et la minoration~$r_1\gg_A (\log x)^{10A}$ provenant du théorème de Siegel~\cite[théorème~II.8.33]{ITAN} fournissent
\begin{equation}
\cS_1 \ll_{\ee, A, k} \Psi(x, y_1) \frac{\Psi(Q, y_2)}{\vphi(q_0)Q} \frac{\e^{O_k(u_2)}}{r_1^{1/2} H(u_1)^\delta (\log x)^A }. \label{eq:S1-Harper}
\end{equation}
Cette borne est acceptable si~$r_1 > \e^{\delta\sqrt{\log y_1}}$, ou bien si~$H(u_1)> \e^{\sqrt{\log y_1}}$.

Nous pouvons donc supposer dans ce qui suit que~$r_1\leq \e^{\delta\sqrt{\log y_1}}$ et~$H(u_1)\leq \e^{\sqrt{\log y_1}}$. En particulier, nous avons~$y_1 \gg_\ee \e^{(\log x)^{2/3-\ee}}$.

Nous reprenons les arguments de la section~6 de~\cite{FT}, et en particulier les équations~(6.29) et~(6.36). Celles-là fournissent
$$ \Psi([x, (1+\eta)x], y_1 ; \chi_1) \ll |F(\log((1+\eta)x)) - F(\log x)| + \Psi(x, y_1) \e^{-2\delta\sqrt{\log y_1}}, $$
avec
$$ F(t) := \e^{\beta t} \int_0^\infty y_1^{-\beta v}\big\{\omega(\tfrac{t}{\log y_1}-v)-\e^{-\gamma}\big\}K(y_1^v)\dd v, \qquad K(t) := \sum_{n\leq t} \chi_1(n), $$
où~$\omega$ désigne la fonction de Buchstab~\cite[équation~(6.5)]{FT} et~$\beta$ désigne le zéro exceptionnel, proche de~$1$, de~$L(s, \chi_1)$. Pour~$\e^t \not\in y_1\NN$, nous avons
$$ F'(t) = \beta F(t) + \e^{\beta t} \int_0^\infty y_1^{-\beta v} \omega'\big(\tfrac{t}{\log y_1}-v\big)K(y_1^v)\dd v - y_1^\beta K(\e^t/y_1) $$
où~$\omega'$ est prolongée par continuité à droite aux points où~$\omega$ est non dérivable. En utilisant le lemme~6.2 de~\cite{FT}, l'inégalité de Pólya-Vinogradov~\cite[théorème~II.8.10]{ITAN} et le fait que~$\eta\leq 1$, il vient
\begin{align*}
& |F(\log((1+\eta)x)) - F(\log x)| \\ \ll{}& \eta \big( x^\beta \int_0^\infty y_1^{-\beta v} \rhod(u_1-v) H(u_1)^{-\delta}\e^{2\delta v} |K(y_1^v)|\dd v + y_1^\beta \sqrt{r_1}\log r_1\big) \\
\ll{}& \eta x^\beta \rhod(u_1)
\end{align*}
où nous avons utilisé l'hypothèse~$y_1\leq x^{1/C}$ pour majorer le terme~$y_1^\beta \sqrt{r_1}\log r_1$. Nous déduisons
$$ \Psi([x, (1+\eta)x], y_1 ; \chi_1) \ll \Psi(x, y_1)\big\{\eta x^{\beta-1} \log x + \e^{-\delta \sqrt{\log y_1}}\big\}. $$
Nous insérons cela dans~\eqref{eq:def-S1}, en utilisant encore~\eqref{eq:majo-q} et~\eqref{eq:majo-euler-p-alpha}. Nous obtenons
\begin{equation}
\cS_1 \ll_{\ee, A, k} \Psi(x, y_1)\frac{\Psi(Q, y_2)}{\vphi(q_0)Q}\e^{O_k(u_2)}\big\{\eta(\log x)^{-A} + \e^{-\delta\sqrt{\log y_1}}\big\}.\label{eq:S1-FT}
\end{equation}
En réinterprétant le paramètre~$\delta$, la conjonction des deux bornes~\eqref{eq:S1-Harper} et~\eqref{eq:S1-FT} implique donc la borne annoncée
$$ \cS_1 \ll_{\ee, A, k} \eta \Psi(x, y_1) \frac{\Psi(Q, y_2)}{\vphi(q_0) Q}\frac{\e^{O_k(u_2)}}{(\log x)^A}. $$
\end{proof}

\section{Crible majorant pour les entiers friables}

La fonction de~$n$ au membre de droite de l'inégalité~\eqref{eq:majo-basique-crible} fonctionne de façon analogue à un crible, en ce qu'elle majore la fonction indicatrice d'un ensemble d'entiers au moyen d'informations sur les diviseurs de petite taille des éléments de cet ensemble. Dans cette section nous établissons la version précise de l'inégalité~\eqref{eq:majo-basique-crible} que nous utiliserons.

\begin{proposition}\label{prop:poids}
Pour tous~$\ee, \kappa>0$, il existe~$u_0>0$ tel que lorsque~$(R, y)\in\RR^2$, $(\log R)^{2+\ee} \leq y\leq R^{1/u_0}$ et~$D\in\NN$, il existe une fonction~$\vartheta = \vartheta_{R, y, D, \kappa}:\NN \to \RR$ ayant les propriétés suivantes.
\begin{enumerate}[label=(\roman*)]
\item Nous avons~$|\vartheta(q)| \leq \tau_3(q)$ pour tout~$q\in\NN$.
\item Nous avons~$\vartheta(q)=0$ si~$q\not\in [R, y^4R]$, $(q, D)>1$ ou~$P^+(q)>y$.
\item Pour tout~$n\in\NN$, nous avons
$$ \1_{P^+(n)\leq y} \leq \sum_{q|n} \vartheta(q) $$
si~$n/(n, D^\infty) \geq R$, et~$\sum_{q|n}\vartheta(q) \geq 0$ dans tous les cas.
\item Pour~$m\in\NN$, et~$f$ une fonction multiplicative avec~$0\leq f(p^\nu)\leq \kappa$, nous avons
$$ \Big|\sum_{(q, m)=1} \frac{\vartheta(q)f(q)}{q}\Big| \leq \e^{O_{\ee, \kappa}((\log R)/\log y)}\rhod(\tfrac{\log R}{\log y}). $$
La constante implicite dépend au plus de~$\ee$ et~$\kappa$.
\end{enumerate}
\end{proposition}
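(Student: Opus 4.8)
First I would isolate the combinatorial core. Fix a $y$-friable $n$ and set $n' := n/(n, D^\infty)$; write the prime factorisation of $n'$ with multiplicity in \emph{decreasing} order, $n' = p_1\cdots p_k$ with $p_1\geq\cdots\geq p_k$, and put $N_i := p_1\cdots p_i$ (so $N_0 = 1$). Then $N_i/N_{i-1} = p_i\leq P^+(n')\leq y$, so if $n'\geq R$ there is a unique index $j$ with $N_{j-1}<R\leq N_j$, and the divisor $q := N_j$ of $n'$ satisfies $R\leq q<Ry$, $P^+(q)\leq y$, $(q,D)=1$, and $q/P^-(q) = N_{j-1}<R$. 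Consequently the nonnegative weight
$$ \vartheta_1(q) := \1\big[R\leq q<Ry,\ q/P^-(q)<R,\ P^+(q)\leq y,\ (q,D)=1\big] $$
obeys $\sum_{q\mid n}\vartheta_1(q)\geq\vartheta_1(N_j) = 1 = \1_{P^+(n)\leq y}\1_{n'\geq R}$ when $n'\geq R$, and $\sum_{q\mid n}\vartheta_1(q)\geq 0$ always, using that $(q,D)=1$ makes $q\mid n$ and $q\mid n'$ equivalent. Thus $\vartheta_1$ already satisfies (i), (ii), (iii). The only point that needs work is (iv): writing $q = q''p$ with $p = P^-(q)$, the constraint $q/P^-(q)<R$ forces $q''\in[R/y, R)$, a single multiplicative decade, so by~\eqref{eq:compar-psi-rho-unif} and Mertens' theorem $\sum_q\vartheta_1(q)/q$ is of order $\rhod(\log R/\log y)$ times a slowly growing factor — a factor which, though slowly growing, the hypotheses do not permit to be absorbed into $\e^{O(u)}$ (where $u := \log R/\log y$).

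To repair (iv) one has to build a little cancellation into the weight while keeping the lower bound (iii). The plan is to mollify $\vartheta_1$ by a combinatorial sieve acting on the ``lower part'' $q'' = q/P^-(q)$ of $q$: replace the indicator of the conditions on $q''$ by a truncated upper-bound sieve weight $\lambda$, supported on $[1, y^3]$, bounded by $1$, with $\sum_{d\mid m}\lambda(d)\geq 0$ for all $m$, so that the divisor sum $\sum_{q\mid n}\vartheta(q)$ keeps its lower bound, while the free sum picks up the saving coming from $\sum_d\lambda(d)f(d)/d$, which removes the spurious slowly-growing factor. A naive sift would however ruin (iii) precisely for those $n'$ whose anatomy is dominated by small or medium prime factors; the weight must therefore be assembled from a bounded number of pieces (a trichotomy according to the size of, say, the $z$-smooth part of $n'$, for a suitable $z<y$). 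This several-term, at-most-triple-convolution structure of shape $\lambda*\1*\vartheta_1$ is what produces the bound $|\vartheta|\leq\tau_3$ in (i) and the window $[R, y^4 R]$ in (ii): $y^3$ from the sieve level, one further factor $y$ from the extracted prime $p$. The parameters (the sieve level, the threshold $z$, and $u_0$) are fixed in terms of $\ee$ and $\kappa$, which is why $\vartheta$ depends on $\kappa$; in particular $u_0$ is taken large enough that $R\geq y^{u_0}$ leaves room for a length-$y^3$ sieve window, and $(\log R)^{2+\ee}\leq y$ guarantees $y\geq(\log(y^4 R))^{1+\ee'}$ so that~\eqref{eq:compar-psi-rho-unif} is available at all scales $\leq y^4 R$.

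With the weight defined, properties (i)--(iii) are verified exactly as for $\vartheta_1$ above, the upper-bound-sieve property of $\lambda$ supplying the nonnegativity and lower bound in (iii). Property (iv) is then the main computation: one expands $\sum_{(q,m)=1}\vartheta(q)f(q)/q$ along the convolution, evaluates the $\lambda$-sum by Rankin's trick together with the classical estimate $\sum_{d\leq z}\mu(d)/d\ll(\log z)^{-1}$ (the fundamental lemma of the sieve) to extract the cancellation, and bounds the remaining $\vartheta_1$-type sum by~\eqref{eq:compar-psi-rho-unif}, Mertens' theorem for the sums over $p$, and the differential-delay-equation estimates for $\rhod$. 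The hypothesis $0\leq f(p^\nu)\leq\kappa$ enters only through the fact that $\tau_\kappa$-weighted friable sums are governed by the $\kappa$-analogue of the Dickman function, which differs from $\rhod$ by a factor $\e^{O_\kappa(u)}$; this is the origin of the factor $\e^{O_{\ee,\kappa}(u)}$ in the stated bound.

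The crux is the design in the second paragraph. One needs a weight whose \emph{divisor} sum keeps the clean inequality $\sum_{q\mid n}\vartheta(q)\geq\1_{P^+(n)\leq y}\1_{n'\geq R}$ intact, yet whose \emph{free} sum $\sum_q\vartheta(q)f(q)/q$ enjoys the cancellation supplied by the sieve — the tension being that the sifting which produces that cancellation is precisely what can destroy the lower bound on the exceptional anatomies of $n'$, and it is resolving this tension that forces the several-term, $\tau_3$-bounded construction and the enlarged support $[R, y^4 R]$. Everything else — the combinatorial inequality, Rankin's trick, Mertens, and the $\rhod$-asymptotics — is routine.
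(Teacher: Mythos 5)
Your opening combinatorial step is a genuine mirror image of the paper's: you accumulate the \emph{largest} prime factors of $n'$ (so the divisor $q$ satisfies $q/P^{-}(q)<R\leq q<Ry$ and the implicit anatomical condition is $P^{+}(n'/q)\leq P^{-}(q)$), whereas the paper accumulates the \emph{smallest} (so $d_0$ satisfies $R\leq d_0<RP^{+}(d_0)$ and the implicit condition is $P^{-}(m/d_0)\geq P^{+}(d_0)$). Both give (i)--(iii) for the naive indicator, and you correctly observe that dropping the anatomical condition makes the free sum in (iv) too large by roughly a $\log y$ (the paper says this explicitly).

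The crux, however, is mis-set-up. You propose a sieve ``acting on the lower part $q''=q/P^{-}(q)$ of $q$,'' i.e.\ on a piece of $q$ itself. That cannot recover the lost information, because the condition that was dropped is a statement about the \emph{complementary} divisor $n'/q$, not about any sub-part of $q$. The paper's repair is precisely to majorise $\1_{P^{-}(m/d_0)\geq P^{+}(d_0)}$ by $\sum_{d_1\mid n/d_0,\,(d_1,d_0D)=1}\xi_{d_1}$, where $(\xi_{d_1})$ are upper-bound sieve weights of level $y^2$ and dimension $\kappa$ sifting out primes $<P^{+}(d_0)$ from $m/d_0$; then $\vartheta(q)=\sum_{q=d_0d_1}\xi_{d_1}$. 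In your mirror decomposition the correct analogue would be a sieve on $n'/q$ against primes in the interval $(P^{-}(q),y]$. Because you have the sieve on the wrong factor, you are led to the belief that ``a naive sift would ruin (iii)'' and hence to a trichotomy; neither is needed. An upper-bound sieve satisfies $\sum_{d\mid m}\xi_d\geq 0$ for every $m$, and $=1$ when the anatomical condition holds, so the convolution with $\1[\cdots\text{ conditions on }d_0\cdots]$ preserves the lower bound $\sum_{q\mid n}\vartheta(q)\geq\1_{P^{+}(n)\leq y}$ with no case analysis, and $|\vartheta(q)|\leq\tau_2(q)\leq\tau_3(q)$ comes for free from the two-variable sum, not from a genuine triple convolution $\lambda*\1*\vartheta_1$.

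Finally, the estimate (iv) is not as routine as your last paragraph suggests. What is actually used is the sieve bound $\sum_{(d_1,m)=1}\xi_{d_1}f(d_1)/d_1\ll_\kappa\prod_{p<P^{+}(d_0),\,p\nmid m}(1+f(p)/p)^{-1}$ (from \cite[lemme~5]{FI}), followed by a split of the $d_0$-sum according to $\ell=P^{+}(d_0)$, a Rankin bound on the inner friable sum, the estimate $(R/\ell)^{\beta_\ell-1}=\e^{O(v_\ell)}\Psi(R/\ell,\ell)/(R/\ell)$, Buchstab's identity to resum over $\ell$, and \eqref{eq:compar-psi-rho-unif}. Quoting ``Rankin plus $\sum_{d\leq z}\mu(d)/d\ll(\log z)^{-1}$'' misses both the specific sieve inequality and the Buchstab recursion, which is where the clean factor $\rhod(\log R/\log y)$ ultimately comes from.
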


\begin{proof}
Soit~$m := n/(n, D^\infty)$, supposons~$m\geq R$ et écrivons~$m=p_1 \dotsb p_J$ où les nombres~$p_j$ sont premiers et~$p_j \leq p_{j+1}$ pour tout~$j$. En raisonnant comme dans~\cite{Khmyrova, Vaughan}, considérons la suite~$(c_j)_0^J$ de diviseurs de~$m$ définie par~$c_0=1$ et, pour~$j\geq 1$, $c_j = p_1 \dotsb p_j$. Puisque par hypothèse~$c_J = m\geq R>1$, nous avons nécessairement~$c_{j-1}<R\leq c_j$ pour un certain~$j\geq 1$. Notons~$d_0$ l'entier~$c_j$ ainsi formé. Il s'agit d'un diviseur de~$n$, premier avec~$D$, satisfaisant les conditions~$R\leq d_0 < RP^+(d_0)$ et~$P^-(m/d_0)\geq P^+(d_0)$. Nous avons donc
\begin{equation}
\1_{P^+(n)\leq y} \leq \ssum{R\leq d_0 < RP^+(d_0) \\ P^+(d_0)\leq y \\ d_0|n, (d_0, D)=1} \1_{P^-(m/d_0)\geq P^+(d_0)}.\label{eq:premajo-HV}
\end{equation}
Cette majoration implique bien~\eqref{eq:majo-basique-crible}. Il est cependant important, pour les grandes valeurs de~$y$, de ne pas oublier la condition~$P^-(m/d_0)\geq P^+(d_0)$, au risque d'engendrer un facteur supplémentaire de l'ordre de~$\log y$ dans nos applications. Pour contourner cela, pour chaque entier~$d_0$ apparaissant dans~\eqref{eq:premajo-HV}, nous introduisons des coefficients de crible majorant~$(\xi_{d_1})$ de niveau~$y^2$ et dimension~$\kappa$, pour les nombres premiers~$p< P^+(d_0)$, en suivant la construction donnée par exemple dans~\cite[Lemma~5]{FI} où~$s$ et~$z$ sont remplacés respectivement par~$2$ et~$P^+(d_0)$. En particulier, on a~$\xi_1=1$, $|\xi_{d_1}|\leq 1$, et
$$ \xi_{d_1} \neq 0 \Rightarrow \begin{cases} d_1\leq y^2, \\ P^+(d_1)<P^+(d_0), \\ \mu^2(d_1)=1. \end{cases} $$
Enfin, ces coefficients vérifient la majoration uniforme (\cite[lemme~5]{FI})
$$ \sum_{(d_1, m)=1} \frac{\xi_{d_1}f(d_1)}{d_1} \ll_\kappa \prod_{\substack{p < P^+(d_0) \\ p\nmid m}} \Big(1+\frac{f(p)}p\Big)^{-1} \qquad (m\in\NN), $$
lorsque la fonction~$f$ est multiplicative avec~$0\leq f(p^\nu) \leq \kappa$. Notons que ces propriétés impliquent bien
$$ \1_{P^-(m/d_0)\geq P^+(d_0)} \leq \ssum{d_1|n/d_0 \\ (d_1, d_0D)=1} \xi_{d_1}. $$
Nous insérons cette majoration dans~\eqref{eq:premajo-HV}. En posant, pour tout~$q\in\NN$,
\begin{equation}
\vartheta(q) := \underset{\substack{q=d_0d_1 \\ R \leq d_0 < RP^+(d_0) \\ P^+(d_0)\leq y \\ (d_0d_1, D)=(d_0, d_1)=1}}{\sum \sum}\ \xi_{d_1},\label{eq:def-theta}
\end{equation}
il résulte de ce qui précède que
$$ \1_{P^+(n)\leq y} \leq \ssum{R\leq d_0 < RP^+(d_0) \\ P^+(d_0)\leq y \\ d_0|n, (d_0, D)=1} \ssum{d_1|n/d_0 \\ (d_1, d_0D)=1} \xi_{d_1} = \sum_{q|n} \vartheta(q). $$

Il reste à montrer la propriété~(iv) qui porte sur une majoration en moyenne. Nous avons
\begin{align*}
S :={}& \sum_{(q, m)=1} \frac{\vartheta(q)f(q)}{q} \\={}& \ssum{R\leq d_0 < RP^+(d_0) \\ (d_0, m)=1} \frac{f(d_0)}{d_0} \ssum{(d_1, md_0D)=1} \frac{\xi_{d_1}f(d_1)}{d_1}  \\ \ll_\kappa {}& \ssum{R\leq d_0 < RP^+(d_0) \\ (d_0, m)=1} \frac{f(d_0)}{d_0}\prod_{\substack{p<P^+(d_0) \\ p\nmid md_0 D}}\Big(1+\frac{f(p)}p\Big)^{-1}.\end{align*}
Nous séparons la somme sur~$d_0$ suivant~$\ell=P^+(d_0)$, $\ell^\nu \| d_0$. Nous obtenons
$$ S \ll_\kappa \ssum{\ell\text{ premier} \\ \ell \leq y \\ \ell^{\nu-1}\leq R} \frac{f(\ell^\nu)}{\ell^\nu} \prod_{\substack{p<\ell \\ p\nmid mD}} \Big(1 + \frac{f(p)}p\Big)^{-1} \ssum{R/\ell^\nu \leq d < R/\ell^{\nu-1} \\ P^+(d)<\ell \\ (d, mD)=1 } \frac{f(d)}{d}\prod_{p|d}\Big(1+\frac{f(p)}p\Big), $$
Notons~$\beta_\ell = \alpha(R/\ell, \ell)$ si~$\ell\geq (\log R)^{2+\ee}$, et~$\beta_\ell=\tfrac{1+\ee}{2+\ee}>\tfrac12$ sinon. La somme sur~$d$ ci-dessus est
$$ \leq \Big(\frac{R}{\ell^\nu}\Big)^{-1+\beta_\ell} \ssum{P^+(d)<\ell \\ (d, mD)=1} \frac{f(d)}{d^{\beta_\ell}}\prod_{p|d}\Big(1+\frac{f(p)}p\Big) \ll_{\ee,\kappa} \Big(\frac{R}{\ell^\nu}\Big)^{-1+\beta_\ell} \prod_{\substack{p<\ell \\ p\nmid mD}}\Big(1+\frac{f(p)}{p^{\beta_\ell}}\Big) $$
où nous avons utilisé le fait que~$\beta_\ell>\min\{\tfrac34, \tfrac12+\tfrac\ee{10}\}$ lorsque~$u_0$ (donc~$R$) est suffisamment grand. Nous avons donc
\begin{equation}
\begin{aligned}
S {}&\ll \ssum{\ell \text{ premier} \\ \ell \leq y \\ \ell^{\nu-1} \leq R} \Big(\frac{R}{\ell^\nu}\Big)^{-1+\beta_\ell}\frac{1}{\ell^\nu}\prod_{\substack{p<\ell \\ p\nmid mD}}\Big(1+O_\kappa\Big(\frac{p^{1-\beta_\ell}-1}p\Big)\Big) \\
{}&\ll \ssum{\ell \text{ premier} \\ \ell \leq y} \Big(\frac{R}{\ell}\Big)^{-1+\beta_\ell}\frac{1}{\ell}\prod_{\substack{p<\ell \\ p\nmid mD}}\Big(1+O_\kappa\Big(\frac{p^{1-\beta_\ell}-1}p\Big)\Big). \label{eq:S-majo-prefinal}
\end{aligned}
\end{equation}
Le produit eulérien est borné par
\begin{equation}
\exp\Big\{O_\kappa\Big(\sum_{p<\ell}\frac{p^{1-\beta_\ell}-1}p\Big)\Big\} = \exp\Big\{O_\kappa\Big(\min\Big\{\frac{\log R}{\log \ell}, \frac{\ell^{\frac1{2+\ee}}}{\log \ell}\Big\}\Big)\Big\}.\label{eq:majo-somme-prem}
\end{equation}
en utilisant~\cite[lemme~3.6]{BT05}. Nous insérons cette majoration dans le membre de droite de~\eqref{eq:S-majo-prefinal}, et notons~$S_1+S_2$ la somme résultante, avec~$S_1$ la contribution de~$\ell\leq (\log R)^{2+\ee}$ et~$S_2$ la contribution complémentaire. Ainsi, nous avons
$$ S_1 \ll R^{-\frac1{2+\ee}} \ssum{\ell\text{ premier} \\ \ell\leq (\log R)^{2+\ee}}\frac{\e^{O_\kappa(\ell^{1/(2+\ee)}/\log \ell)}}{\ell^{(1+\ee)/(2+\ee)}} \ll R^{-\frac1{2+\ee} +O_\kappa(1/\log\log R)}. $$
Considérons ensuite~$S_2$, en notant~$v_\ell := (\log R)/\log \ell$ et~$u := (\log R)/\log y$. Nous avons ainsi
$$ S_2 \ll \ssum{\ell\text{ premier} \\ (\log R)^{2+\ee} < \ell\leq y} \frac{\e^{O_\kappa(v_\ell)}}{\ell (R/\ell)^{1-\beta_\ell}}. $$
Nous utilisons ensuite l'estimation, déduite de la formule~(7.8) de~\cite{HT}
$$ (R/\ell)^{\beta_\ell-1} = \e^{O_\kappa(v_\ell)} \frac{\Psi(R/\ell, \ell)}{R/\ell}. $$
En découpant la somme en intervalles~$y^{1/(j+1)}<\ell \leq y^{1/j}$, nous obtenons
\begin{align*}
\ssum{\ell\text{ premier} \\ (\log R)^{2+\ee} < \ell\leq y} \frac{\e^{O_\kappa(v_\ell)}}{\ell (R/\ell)^{1-\beta_\ell}} {}& \leq \frac1R \ssum{j\geq 1 \\ (\log R)^{2j} \leq y} \e^{O_\kappa(ju)} \ssum{\ell\text{ premier} \\ \ell\leq y^{1/j}} \Psi(R/\ell, \ell) \\ {}& = \frac1R \ssum{j\geq 1 \\ (\log R)^{2j} \leq y} \e^{O_\kappa(ju)} \Psi(R, y^{1/j})
\end{align*}
par l'identité de Buchstab~\cite{Buchstab}. Nous utilisons ensuite l'estimation~\eqref{eq:compar-psi-rho-unif}, qui fournit
$$ \Psi(R, y^{1/j}) \ll \e^{O_\kappa(u)}\Big(\frac{\e^{O_\kappa(1)}}{u\log u}\Big)^{(j-1)u} \Psi(R, y), $$
pour conclure que~$S_2 \ll \e^{O_\kappa(u)}\frac{\Psi(R, y)}{R}$ lorsque~$u_0$ est suffisamment grand. Lorsque~$y\geq (\log R)^{2+2\ee}$, nous avons~$R^{\frac{1+\ee}{2+\ee} + O(1/\log\log R))}\ll_\ee \Psi(R, y)$, et ainsi
$$ S_1 + S_2 \ll_\ee \e^{O_\kappa(u)}\frac{\Psi(R, y)}{R} \ll \e^{O_\kappa(u)} \rhod(u)  $$
comme annoncé, en utilisant encore l'estimation~\eqref{eq:compar-psi-rho-unif}.

\end{proof}

\section{Cas d'un facteur linéaire}

Nous étudions dans cette section le cas~$d_1=1$ et~$r\geq 2$.

\subsection{Cas~$d_2=1$}

Le cas de deux facteurs linéaires présente une structure particulière, puisque dans ce cas nous pouvons utiliser des résultats d'équirépartition spécifiques.

\begin{theoreme}\label{th:correlation-2lin}
Soit~$\ee>0$ fixé. Il existe~$C, \delta>0$ tels que pour des entiers~$a$, $b$, $c$, $d$ de valeurs absolues au plus~$x^{\delta}$, avec~$a>0$, $c>0$, $\Delta := ad - bc\neq 0$ et $(a, c)=1$, l'on ait
\begin{equation}\label{eq:card}
\begin{aligned}
\card\{n\in[x, (1+\eta)x]: &\ P^+(an+b) \leq y_1,\ P^+(cn+d)\leq y_2 \} \\ &\ll_\ee \eta \Psi(x, y_1) \rhod(u_2)^{3/5-\ee}
\end{aligned}
\end{equation}
pour~$(\log x)^C\leq y_1 \leq y_2 \leq x$, $\e^{-\delta\sqrt{\log y_1}} \leq \eta \leq 1$ et~$y_2 \leq y_1^C$. Ici,~$u_j = (\log x)/\log y_j$.
\end{theoreme}

De cette estimation, combinée avec le théorème~2.(ii) et le corollaire~2 de Hildebrand et Tenenbaum~\cite{HT}, nous déduisons le corollaire suivant.

\begin{corollaire}
Pour tout~$\ee>0$, il existe~$C, \delta >0$ pouvant dépendre de~$\ee$ tels que lorsque~$(\log x)^C \leq y \leq x$, $\e^{-\delta\sqrt{\log y}}\leq \eta \leq 1$ et~$1\leq a, |b| \leq x^\delta$, l'on ait
$$ |\{n\in[x, (1+\eta)x]:\ P^+(n(an+b)) \leq y\}| \ll_\ee \eta x \rhod(u)^{8/5-\ee}. $$
\end{corollaire}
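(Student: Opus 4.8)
The plan is to derive the corollary directly from Theorem~\ref{th:correlation-2lin}, applied to the pair of linear forms $n$ and $an+b$. Using the identity $P^+(mn)=\max\{P^+(m),P^+(n)\}$, the set to be counted equals $\{n\in[x,(1+\eta)x]:P^+(n)\leq y,\ P^+(an+b)\leq y\}$. Since $1\leq|b|$ we have $b\neq0$, so the forms $X$ and $aX+b$ are non-proportional; applying Theorem~\ref{th:correlation-2lin} with its parameters $(a,b,c,d)$ taken to be $(1,0,a,b)$ — for which the theorem's hypotheses $a=1>0$, $c=a\geq1>0$, $\Delta=1\cdot b-0\cdot a=b\neq0$, $(1,a)=1$, and all coefficients of absolute value $\leq x^\delta$ all hold — and with $y_1=y_2=y$ (so that $u_1=u_2=u$, and $y_1\leq y_2\leq y_1^C$ holds once $C\geq1$), we obtain, for suitable $C,\delta>0$ with $C\geq2$,
\[
|\{n\in[x,(1+\eta)x]:P^+(n(an+b))\leq y\}|\ \ll_\ee\ \eta\,\Psi(x,y)\,\rhod(u)^{3/5-\ee}
\]
in the indicated range of $y,\eta,a,b$. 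By~\eqref{eq:compar-psi-rho-unif} (valid since $y\geq(\log x)^C\geq(\log x)^2$), one has $\Psi(x,y)\leq x\rhod(u)\e^{O_\ee(u)}$, so the right-hand side is $\ll_\ee\eta\,x\,\rhod(u)^{8/5-\ee}\,\e^{O_\ee(u)}$.

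It then remains to absorb the factor $\e^{O_\ee(u)}$. To this end I would run the preceding step with $\ee/2$ in place of $\ee$ and split into two ranges of $u$. If $u\geq u_0(\ee)$ for a suitably large $u_0(\ee)$, the standard decay $-\log\rhod(u)=(1+o(1))u\log u$ of the Dickman function gives $\e^{O_{\ee/2}(u)}\leq\rhod(u)^{-\ee/2}$, hence $\rhod(u)^{8/5-\ee/2}\e^{O_{\ee/2}(u)}\leq\rhod(u)^{8/5-\ee}$ and the desired bound follows. If $u<u_0(\ee)$, then $\rhod(u)^{8/5-\ee}\geq\rhod(u_0(\ee))^{8/5-\ee}\gg_\ee1$, while the set in question contains at most $\eta x+1\leq2\eta x$ integers (the last inequality because $\eta\geq\e^{-\delta\sqrt{\log y}}\geq\e^{-\delta\sqrt{\log x}}$ forces $\eta x\to\infty$), so again its cardinality is $\ll_\ee\eta x\rhod(u)^{8/5-\ee}$. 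One finally takes for $C,\delta$ in the corollary the constants furnished by Theorem~\ref{th:correlation-2lin} with $\ee/2$, enlarging $C$ so that $C\geq2$ and shrinking $\delta$ so that the hypotheses of the corollary entail those of the theorem.

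The substance is entirely contained in Theorem~\ref{th:correlation-2lin}, and the argument above is bookkeeping; the only mildly delicate point is the treatment of the loss $\e^{O_\ee(u)}$ in the comparison of $\Psi(x,y)$ with $x\rhod(u)$. This loss is harmless precisely because $\rhod$ decays like $u^{-u}$, so that for large $u$ it is swallowed by an arbitrarily small power of $1/\rhod(u)$, whereas for bounded $u$ the trivial estimate for an interval of length $\eta x$ already suffices. Alternatively, one may replace the use of~\eqref{eq:compar-psi-rho-unif} by the sharper information on $\Psi(x,y)/(x\rhod(u))$ contained in Theorem~2.(ii) and Corollary~2 of~\cite{HT}.
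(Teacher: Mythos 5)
Your proof is correct and follows exactly the route the paper intends: apply Theorem~\ref{th:correlation-2lin} with $(1,0,a,b)$ in place of $(a,b,c,d)$ and $y_1=y_2=y$, then pass from $\Psi(x,y)$ to $x\rhod(u)$. The only part that needs care is the $\e^{O_\ee(u)}$ loss in~\eqref{eq:compar-psi-rho-unif} (the asymptotic $\Psi(x,y)\sim x\rhod(u)$ is \emph{not} available down to $y=(\log x)^C$), and you handle it correctly by running the theorem at $\ee/2$ and splitting according to whether $u$ is large or bounded, using $-\log\rhod(u)\sim u\log u$ in the first case and the trivial bound $\eta x+O(1)\ll\eta x$ in the second. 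The paper's terse reference to Theorem~2.(ii) and Corollary~2 of~\cite{HT} serves the same bookkeeping purpose; your appeal to~\eqref{eq:compar-psi-rho-unif} plus absorption is an equivalent and perfectly acceptable way to carry it out, and you note the alternative explicitly.
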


\begin{proof}[Démonstration du Théorème~\ref{th:correlation-2lin}]
Nous pouvons supposer que~$(a, b)=(c, d)=1$ sans perte de généralité. Supposons de plus~$y_2\leq x^{1/C}$ puisque dans le cas contraire la borne énoncée est triviale. Notons~$S$ le membre de gauche de~\eqref{eq:card}. Nous écrivons
$$ S = S_1 + S_2, $$
où~$S_1$ est la contribution des entiers~$n$ satisfaisant~$\prod_{p^\nu \| cn+d, \ p\nmid ac\Delta}p^\nu \geq x^{3/5-\ee}$, et~$S_2$ la contribution complémentaire.

Nous majorons d'abord~$S_1$. La Proposition~\ref{prop:poids} avec~$D=ac|\Delta|$,~$R=x^{3/5-\ee}$ et~$\kappa=2$, fournit l'existence d'une fonction~$\vartheta$ satisfaisant les conditions~(i)-(iv) de la Proposition~\ref{prop:poids}, en particulier
$$ S_1 \leq \sum_q \vartheta(q) \ssum{x \leq n \leq (1+\eta)x \\ P^+(an+b)\leq y_1 \\ q|cn+d} 1. $$
La relation de congruence s'écrit~$c(an+b) \equiv -\Delta \mod{aq}$. Par construction de~$\vartheta$, nous avons~$(q, ac\Delta)=1$. Notons~$X_1 := ax+b$, $X_2=(1+\eta)ax+b$ et~$\cI = [X_1, X_2]$ ; on remarque que~$(X_2-X_1)/X_1 \asymp \eta$. Alors
\begin{equation}
S_1 \leq \sum_{(q, ac\Delta)=1} \vartheta(q) \Psi(\cI, y_1 ; aq, -\bar{c}\Delta) \label{eq:majoS1}
\end{equation}
Nous appliquons l'estimation~\eqref{eq:equidistrib-35} du Théorème~\ref{th:equidistrib}. Considérons d'abord le terme principal~$T_1$ qui en résulte. On obtient
$$ T_1 = \sum_{(q, ac\Delta)=1} \vartheta(q)\frac{\Psi_{aq}(\cI, y_1)}{\vphi(aq)} = \ssum{m\in \cI \\ P^+(m)\leq y_1 \\ (m, a)=1}  \sum_{(q, mac\Delta)=1} \frac{\vartheta(q)}{\vphi(aq)}. $$
L'estimation (iv) de la Proposition~\ref{prop:poids} permet de majorer la somme sur~$q$ par
$$ \frac{\e^{O(u_2)}}{\vphi(a)} \frac{\Psi(R, y_2)}{R}. $$
Nous avons donc
$$ T_1 \ll \frac{\e^{O(u_2)}}{\vphi(a)} \frac{\Psi(R, y_2)}{R} \Psi_a(2ax+b, y_1). $$
Le théorème~2.4 de~\cite{BT05} et le théorème~4 de~\cite[theorem 4]{Hildebrand-Short} permettent d'écrire
$$ \Psi_a(2ax+b, y_1) \ll \prod_{p|a,\ p\leq y_1}(1-p^{-\alpha(x, y_1)}) \Psi(2ax+b, y_1) \ll a\prod_{p|a,\ p\leq y_1}(1-p^{-1}) \Psi(x, y_1). $$
Le théorème de Mertens et les inégalités~$y_1 \geq \log x$ et~$\omega(a)\ll \log x$ fournissent aisément~$\prod_{p|a,\ p\leq y_1}(1-p^{-1}) \ll \vphi(a)/a$. Nous avons donc obtenu
$$ T_1 \ll \e^{O(u_2)} \frac{\Psi(R, y_2)}{R} \Psi(x, y_1). $$
Puisque
$$ \e^{O(u_2)}\frac{\Psi(R, y_2)}{R} \ll_\ee \rhod(u_2)^{3/5-2\ee}, $$
nous avons bien la majoration requise, en utilisant~\eqref{eq:compar-psi-rho-unif}.

Nous écrivons ensuite
$$ U_1 := S_1-T_1 \leq \sum_{(q, ac\Delta)=1} |\vartheta(q)| \Big|\Psi(\cI, y_1 ; aq, -\bar{c}\Delta) - \frac{\Psi_{aq}(\cI, y_1)}{\vphi(aq)}\Big|. $$
La Proposition~\ref{prop:poids}.(i) fournit
$$ U_1 \leq \ssum{R \leq q \leq y^4 R \\ P^+(q)\leq y_2 \\ (q, ac\Delta)=1} \tau_3(q) \Big| \Psi(\cI, y_1 ; aq, -\bar{c}\Delta) - \frac{\Psi_{aq}(\cI, y_1)}{\vphi(aq)} \Big|. $$
L'estimation~\eqref{eq:equidistrib-35} du Théorème~\ref{th:equidistrib} montre que la somme sur~$q$ est
$$ \ll_{\ee, A} \eta \frac{\Psi(ax+b, y_1)}{a} \frac{\Psi(R, y_2)}{R}\frac{\e^{O(u_2)}}{(\log x)^A}. $$
pour tout~$A>0$. On en déduit, de même que précédemment et lorsque~$C$ est choisi suffisamment grand,
\begin{equation}
U \ll_\ee \eta \Psi(x, y_1) \rhod(u_2)^{3/5-\ee} .\label{eq:majo-final-U}
\end{equation}
Ce majorant est de nouveau de l'ordre de grandeur de la borne annoncée.

Enfin, concernant la contribution~$S_2$, nous avons trivialement
\begin{equation}
S_2 \leq \ssum{n\leq 2x \\ P^+(cn+d)\leq y \\ (cn+d, (ac|\Delta|)^\infty) \geq x^{1/5}} 1 \leq \ssum{n\leq 2x^{1+\delta} \\ (n, (ac|\Delta|)^\infty)\geq x^{1/5}} 1 \ll x^{1-\delta}\label{eq:majo-DD-linlin}
\end{equation}
si~$\delta$ est suffisamment petit. Cela est bien de l'ordre de grandeur annoncé en vertu de nos hypothèses sur~$(\eta, y_1, y_2)$.
\end{proof}

\subsection{Cas~$d_2>1$}

\begin{theoreme}\label{th:linQ}
Soient~$\ee>0$,~$(a, b)\in\NN^2$ et~$Q\in\ZZ[X]$ fixés. Il existe~$C, \delta>0$ tels que lorsque~$a\neq 0$ et~$Q$ est de degré~$g\geq 1$, l'on ait
\begin{equation}\label{eq:majo-linQ}
\begin{aligned}
|\{n\leq x :\ {}& P^+((an+b)Q(n))\leq y \}| \ll_{\ee, Q, a, b} x\rhod(u)^{3/2-\ee}
\end{aligned}
\end{equation}
pour~$(\log x)^C \leq y \leq x$.
\end{theoreme}

\begin{proof}
Nous supposons sans perte de généralité que~$Q$ irréductible, non proportionnel à~$aX+b$, que $(a, b)=1$ et que~$y\leq x^{1/10}$. Soit~$D^*$ le discriminant de~$Q$, ainsi que
\begin{equation}
\label{eq:def-D}
D := a^{g+1} Q(-b/a) D^*
\end{equation}
De même que dans la preuve du Théorème~\ref{th:correlation-2lin}, nous écrivons le membre de gauche de~\eqref{eq:majo-linQ} sous la forme~$S_1+S_2$, avec~$S_1$ la contribution des~$n$ tels que~$Q(n)/(Q(n), D^{\infty})\geq x^{1/2-\ee}$, et~$S_2$ la contribution complémentaire.

Nous étudions d'abord~$S_1$. Nous utilisons la Proposition~\ref{prop:poids} avec~$R=x^{1/2-\ee}$, l'entier~$D$ défini en~\eqref{eq:def-D} et~$\kappa = g$. Nous obtenons alors
\begin{align*}
S_1 {}&\leq \sum_q \vartheta(q) \ssum{n \leq x \\ P^+(an+b)\leq y \\ Q(n) \equiv 0\mod{q}}1 \\
{}&= O_\ee(x^{\ee}|b|) + \sum_q \vartheta(q) \ssum{\lambda\mod{q} \\ Q(\lambda)\equiv 0\mod{q}} \Psi(ax+b, y ; aq, a\lambda+b).
\end{align*}
Notons que nécessairement~$(a, a\lambda+b)=1$. De plus, on a~$(q, a\lambda+b) | a^gQ(-b/a)$. Par définition de~$D$ et la Proposition~\ref{prop:poids}.(ii), cela fournit~$(q, a\lambda+b)=1$; nous sommes donc en mesure d'appliquer l'estimation~\eqref{eq:equidistrib-12}. Notons
$$ \rhoQ_Q(q) := |\{\lambda\mod{q}: Q(\lambda)\equiv 0\mod{q}\}|. $$
Nous écrivons
$$ S_1 \ll_\ee x^\ee |b| + T_1 + U_1, $$
$$ T_1 = \sum_q \vartheta(q) \rhoQ_Q(q) \frac{\Psi_{aq}(ax+b, y)}{\vphi(aq)}, $$
$$ U_1 = \ssum{R \leq q \leq y^4R \\ P^+(q) \leq y \\ (q, D)=1} \tau_3(q) \rhoQ_Q(q) \max_{(c, q)=1} \Big| \Psi(ax+b, y ; aq, c) - \frac{\Psi_q(ax+b, y)}{\vphi(aq)}\Big|. $$
Notons que~$\rhoQ_Q(q) \leq g^{\omega(q)} $ lorsque~$(q, D^*)=1$ (voir par exemple la formule~(2.5) de~\cite{Henriot}). Nous avons d'une part, grâce à la Proposition~\ref{prop:poids}.(iv),
\begin{align*}
T_1 {}& = \frac1{\vphi(a)} \ssum{m\in S(ax+b, y) \\ (m, a)=1} \sum_{(q, m)=1} \frac{\vartheta(q) \rhoQ_Q(q)}{\vphi(q)}
\\ {}& \ll_{Q, a} \e^{O(u)} \frac{\Psi(ax+b, y)\Psi(R, y)}{R} 
\\ {}& \ll_{\ee, Q, a, b} x \rhod(u)^{3/2-2\ee}
\end{align*}
qui est de l'ordre de grandeur annoncé, quitte à diminuer~$\ee>0$. D'autre part, l'estimation~\eqref{eq:equidistrib-12} fournit pour tout~$A>0$ fixé
$$ U_1 \ll_{\ee, a} \frac{\e^{O(u)}\Psi(ax+b, y)\Psi(R, y)}{(\log x)^A R} $$
ce qui est à nouveau l'ordre de grandeur annoncé.

En ce qui concerne~$S_2$, qui est pour rappel la contribution au membre de gauche de~\eqref{eq:majo-linQ} des entiers~$n$ avec~$Q(n)/(Q(n), D^\infty)\leq R$, nous notons tout d'abord que pour~$x$ suffisamment grand, nous avons~$Q(n)\geq x^{g-1/2}$ pour~$n\geq x^{1-1/(3g)}$. De plus, on a pour une certaine constante~$C = C_Q$ la majoration~$\rhoQ_Q(m) \leq C^{\omega(m)}$ (voir par exemple la formule (2.14) de~\cite{Tenenbaum-ES}). Ainsi, de façon similaire à~\eqref{eq:majo-DD-linlin}, nous avons
$$ S_2 \leq \ssum{m|D^\infty \\ m\geq x^{g-1}} \ssum{n\leq x \\ m| Q(n)} 1 \ll_Q \ssum{m|D^\infty \\ x \leq m\ll x^g} C^{\omega(m)}\Big(\frac xm + 1\Big) \ll_Q x^{1/2}, $$
par la majoration de Rankin et l'inégalité~$\sum_{m|D^\infty} C^{\omega(m)} m^{-1/3g} \ll_Q 1$.
\end{proof}

\section{Cas d'un facteur quadratique,~$d_1=2$}

Dans le cas quadratique, nous montrons la majoration suivante, dont l'exposant dépasse le seuil~$c_Q=1$.
\begin{theoreme}\label{th:majo-quad}
Soit~$\delta$ réel donné, $0<\delta < 25/178$, et~$Q\in\ZZ[X]$ quadratique irréductible. Lorsque~$C>0$ est suffisamment grande en fonction de~$\delta$ et~$Q$, nous avons
\begin{equation}\label{eq:majo-quad}
\begin{aligned}
|\{n\leq x :\ {}& P^+(Q(n))\leq y \}| \ll_{\delta, Q} x\rhod(u)^{1+\delta}. 
\end{aligned}
\end{equation}
pour~$(\log x)^C \leq y \leq x$.
\end{theoreme}

Notre démonstration de ce résultat utilise l'équirépartition des racines des congruences quadratiques, due notamment à Hooley~\cite{HooleyQuad}. Notons
\begin{equation}
\theta \in [0, 7/64]\label{eq:def-RStheta}
\end{equation}
une borne en direction de la conjecture de Selberg-Ramanujan~\cite[chapitre~15.5]{IK}. La majoration~$\theta\leq 7/64$ est due à Kim et Sarnak~\cite{Kim}. Nous montrons le résultat suivant.

\begin{theoreme}\label{th:exporep-quad}
Soient~$\ee>0$,~$x, M, N\geq 1$, $MN\leq x^2$, et~$(a_m), (b_n)$ deux suites complexes bornées en module par~$1$. Soient~$D\in\ZZ$ qui n'est pas un carré d'entier, et~$V:\RR\to\CC$ une fonction lisse à support compact inclus dans~$\RR_+^\ast$. Alors
\begin{equation}
\begin{aligned}
\sum_{M\leq m \leq 2M} \ssum{N\leq n \leq 2N \\ (n, m)=1} a_m b_n {}& \Big(\ssum{k\in\NN \\ mn | k^2 - D} V\Big(\frac kx\Big) - x{\hat V}(0) \frac{\rhoQ(mn)}{mn}\Big) \\ & \ll_{\ee, D, V} x^{\ud+\ee}M^{\ud} + x^{1+\ee} N^{\frac32-\theta} M^{-\frac14+\frac\theta2},
\end{aligned} \label{eq:exporep-quad}
\end{equation}
où nous avons noté~${\hat V}(\xi) = \int_\RR V(t)\e(-t\xi)\dd t$ pour~$\xi\in\RR$.
\end{theoreme}
Notons que nous avons en toute circonstance la majoration triviale~$O(x^\ee MN)$. En utilisant~$\theta\leq 7/64$, et avec le choix~$M\leq x^{1-\eta/2}$, $N\leq x^{25/178-\eta/2}$, nous obtenons le Théorème~\ref{th:exporep-quad-intro} cité dans l'introduction. Le premier résultat de ce type est dû à Iwaniec, qui considère dans~\cite{Iwaniec} le cas~$D=-4$ et montre un exposant de répartition~$1+1/15$. Une extension à un polynôme quadratique quelconque est montrée par Lemke-Oliver dans~\cite{RJLO}. Iwaniec mentionne que la conjecture~$(R^\ast)$ de Hooley, qui porte sur des majorations de sommes courtes d'exponentielles, permettrait d'obtenir l'exposant~$1/9$~; notons que~$25/178 > 1/9$. Notre amélioration est basée sur l'utilisation dans ce contexte de méthodes issues de la théorie spectrale des formes automorphes~\cite{DFIWeyl, Toth}. La conjecture de Selberg~$\theta=0$ entraînerait la validité de la majoration~\eqref{eq:majo-quad} pour tout~$\delta<1/6$.

Le Théorème~\ref{th:exporep-quad} peut être généralisé au cas d'un polynôme quadratique irréductible quelconque, cependant dans le présent travail nous tirons parti de la possibilité de nous réduire \textit{a priori} au cas de~$X^2-D$, ce qui clarifie la présentation.

La preuve du Théorème~\ref{th:exporep-quad} étant indépendante du reste du présent travail, nous reportons sa démonstration à la section~\ref{sec:equidist}.

\begin{proof}[Démonstration du Théorème~\ref{th:majo-quad} à partir du Théorème~\ref{th:exporep-quad}]

Soient~$\eta, \delta>0$ des paramètres avec $\delta \leq 1/6$. Nous posons~$Q(X) = aX^2 + bX + c$, et notons~$S$ la somme du membre de gauche de~\eqref{eq:majo-quad}. Nous supposons sans perte de généralité que~$4a\leq y\leq x^{\eta}$, la borne annoncée étant triviale dans le cas contraire. De plus, par un découpage dyadique, nous pouvons supposer que les entiers~$n$ comptés dans~$S$ satisfont~$x/2 < n \leq x$. Enfin, la relation~$4a(aX^2 + bX + c) = (2aX+b)^2+4ac-b^2$ nous permet de supposer sans perte de généralité que~$Q(X) = X^2-D$, où~$D=b^2-4ac$ n'est pas un carré d'entier. Nous posons~$D^* = 2D$.

De même que dans la preuve du Théorème~\ref{th:linQ}, nous écrivons le membre de gauche de~\eqref{eq:majo-quad} comme~$S_1+S_2$, où~$S_1$ est la contribution des entiers~$n$ tels que
\begin{equation}
\prod_{\substack{p^\nu \| n^2-D \\ p\nmid D^*,  \nu\leq 2}} p^\nu \geq x^{6/5},\label{eq:cond-quad-S1}
\end{equation}
et~$S_2$ est la contribution complémentaire. Lorsque l'entier~$n$ est compté dans~$S_2$, alors l'une des deux inégalités suivantes est satisfaite~:
$$ \prod_{\substack{p^\nu \| n^2-D \\ p|D^*}} p^\nu \gg_D x^{1/5}, \qquad \text{ou} \qquad \prod_{\substack{p^\nu \| n^2-D \\ \nu\geq 3}} p^\nu \geq x^{1/5}. $$
Le nombre des entiers satisfaisant la première inégalité est majoré par~$O_D(x^{9/10})$ de façon similaire à la preuve du Théorème~\ref{th:linQ}. Le nombre des entiers satisfaisant la seconde inégalité est majoré, en suivant le raisonnement de~\cite[lemme~3.7]{Tenenbaum-ES}, par
\begin{equation}
\ssum{x^{1/5}\leq m \ll x^2 \\ p^\nu\| m \Rightarrow \nu\geq 3} \ssum{n\leq x \\ m|n^2-D} 1 \ll_D \sum_m C^{\omega(m)}\Big(\frac xm + 1\Big) \ll_{\ee, D} x^{13/15+\ee} + x^{2/3+\ee}.\label{eq:quad-majo-S2}
\end{equation}
Cela fournit un terme d'erreur acceptable.

Nous considérons maintenant~$S_1$. Nous fixons une fonction~$V$ de classe~$\cC^\infty$ sur~$\RR_+$ satisfaisant
$$ \1_{[1/2, 1]} \leq V \leq \1_{[1/3, 2]}. $$
Nous avons ainsi
$$ S_1 \leq \ssum{P^+(n^2-D)\leq y \\ n\text{ vérifie \eqref{eq:cond-quad-S1}}} V\Big(\frac nx\Big). $$
Nous appliquons la Proposition~\ref{prop:poids} avec les paramètres~$R=x^{1+\delta}$ (nous rappelons que~$\delta\in[0, 1/6]$), $\kappa=2$ pour la fonction~$f(q) = \rhoQ_Q(q)$, et avec~$n$ remplacé par~$\prod_{p^\nu \| n^2-D, p\nmid D, \nu\leq 2} p^\nu$. Cette dernière quantité est bien supérieure à~$R$ grâce à l'hypothèse~\eqref{eq:cond-quad-S1}. Nous obtenons donc l'existence d'une fonction~$\vartheta$ satisfaisant les propriétés~(i)-(iv) de la Proposition~\ref{prop:poids}. Puisque~$\1_{P^+(n)\leq y} \leq \1_{P^+(\prod_{p^\nu \| n^2-D, p\nmid D, \nu\leq 2} p^\nu)\leq y}$, nous en déduisons
\begin{align*}
S_1 \leq{}& \ssum{n\leq x \\ n\text{ vérifie \eqref{eq:cond-quad-S1}}} V\Big(\frac nx\Big) \sumb{q|n^2-D} \vartheta(q) \\
\leq {}& O_D(x^{14/15}) + \sumb{q} \vartheta(q) \ssum{n \in\NN \\ q|n^2-D} V\Big(\frac nx\Big) \numberthis\label{eq:ineg-quad-S11},
\end{align*}
où le symbole~$\sum^\flat$ indique que la somme est restreinte aux entiers~$q$ tels que~$p^\nu\|q\Rightarrow \nu\leq 2$. Dans la seconde inégalité, nous avons réintégré les entiers~$n$ qui ne satisfont pas~\eqref{eq:cond-quad-S1}, à l'aide de la borne~\eqref{eq:quad-majo-S2}. Soit~$S'_1$ la somme du membre de droite de~\eqref{eq:ineg-quad-S11}. Le terme principal attendu est
$$ T'_1 = x\hV(0) \sumb{q} \frac{\vartheta(q)\rhoQ(q)}q \ll \e^{O(u)} x \frac{\Psi(x^{1+\delta}, y)}{x^{1+\delta}}. $$

Nous étudions ensuite la différence
\begin{equation}
S'_1 - T_1 = \sumb{q} \vartheta(q) \Big(\ssum{n\in\NN \\ q|n^2-D} V\Big(\frac nx\Big) - x\hV(0)\frac{\rhoQ(q)}{q}\Big).\label{eq:diff-S1-majoquad}
\end{equation}
Nous rappelons que~$\vartheta$ est définie en~\eqref{eq:def-theta}, et nous considérons la variable de sommation~$d_0$ qui y intervient. Posons~$Q_1, Q_2\geq 1$ deux réels avec~$Q_1Q_2 = R$. Puisque~$d_0\geq R$, nous pouvons factoriser de façon unique~$d_0 = m\ell$, avec
$$ \begin{cases} Q_2 \leq \ell < Q_2 P^-(\ell)^\nu \quad (P^-(\ell)^\nu \| \ell), \\ P^+(m)<P^-(\ell). \end{cases} $$
Notons que par notre restriction sur la somme~$\sum_q^\flat$, nous avons~$P^-(\ell)^\nu \leq y^2$. Nous avons donc
$$ \vartheta(q) = \ssum{q = m\ell d_1 \\ R \leq m\ell < R P^+(\ell) \\ Q_2 \leq \ell < Q_2 P^+(\ell)^\nu \\ P^+(m) < P^-(\ell),\ P^+(\ell)\leq y \\ (m\ell d_1, D^*)=(m\ell, d_1)=1} \xi_{d_1} $$
où l'entier~$\nu$ dans la somme est déterminé par~$P^+(\ell)^\nu \|\ell$. Nous remarquons que dans cette somme, le poids de crible $\xi_{d_1}$ dépend implicitement de~$P^+(m\ell)=P^+(\ell)$~: ainsi, il ne dépend pas de~$m$. Nous séparons les variables~$m$ et~$\ell$ au moyen, par exemple, du lemme~13.11 de~\cite{IK} (avec~$z=x$), puis nous renommons~$q_1\gets m$ et~$q_2\gets \ell d_1$. Nous obtenons donc l'existence d'une fonction~$W:\RR^3\to\CC$ telle que pour toute fonction~$F:\RR\to\CC$,
$$ \int_{\RR^3} |W({\bf t})|\dd{\bf t} \ll (\log x)^3, \qquad \sumb{q} \vartheta(q) F(q) = \int_{\RR^3} W({\bf t}) T_F({\bf t}) \dd {\bf t}, $$
où nous avons posé~${\bf t}=(t_1, t_2, t_3)$ et
\begin{align*}
T_F(\mathbf{t}) {}&= \ssumm{y^{-2}Q_1 \leq q_1 \leq yQ_1 \\ Q_2 \leq q_2 \leq y^2 Q_2 \\ (q_1, q_2)=1} \alpha_{q_1, {\bf t}} \beta_{q_2, {\bf t}} F(q_1q_2), \\
\alpha_{q_1,\mathbf{t}} {}&= \1_{\substack{(q_1, D^*)=1\\p^\nu\|q_1\Rightarrow \nu\leq2}} q_1^{i(t_1-t_2)} P^+(q_1)^{-it_3}, \\
\beta_{q_2,\mathbf{t}} {}&= \1_{\substack{(q_2, D^*)=1\\p^\nu\|q_2\Rightarrow \nu\leq2}} \ssum{q_2=\ell d_1 \\ (\ell, d_1) = 1 \\ Q_1 < \ell \leq Q_1P^+(\ell)^\nu \\ P^+(\ell)\leq y} \xi_{d_1} \ell^{i(t_1-t_2)}P^+(\ell)^{it_2} P^-(\ell)^{it_3}.
\end{align*}
Nous appliquons cela avec~$F(q)$ remplacé par la parenthèse intérieure du membre de droite de~\eqref{eq:diff-S1-majoquad}. Uniformément pour~${\bf t}\in\RR^3$, et~$(Q_1', Q_2')\in\RR^2$ satisfaisant~$y^{-2}Q_1 /2 \leq Q_1' \leq yQ_1$ et~$Q_2/2\leq Q_2' \leq y^2Q_2$, le Théorème~\ref{th:exporep-quad} ainsi que la borne~$\tau(q_2)\ll Q_2^\ee$ fournissent
$$ \ssumm{Q'_1 < q_1 \leq 2Q'_1 \\ Q'_2 < q_2 \leq 2Q'_2 \\ (q_1, q_2)=1} \alpha_{q_1, \mathbf{t}} \beta_{q_2, \mathbf{t}} \Big(\ssum{n\in\NN \\ q_1q_2|n^2-D} V\Big(\frac nx\Big) - x\hV(0)\frac{\rhoQ(q_1q_2)}{q_1q_2}\Big)
\ll_{\ee, D} y^{O(1)}x^{1+\ee}(x^{-\ud}Q_1^{\ud} + Q_1^{-\frac14+\theta/2}Q_2^{\frac32-\theta}). $$
Nous optimisons par le choix~$Q_1 = x^{1-K\eta}$ et~$Q_2=x^{\kappa-K\eta}$ avec~$\kappa=\tfrac{1-2\theta}{6-4\theta}$ et~$K>0$ une constante absolue suffisamment grande. Cela fixe donc
\begin{equation}\label{eq:expr-delta}
\delta = 1+ \frac{1-2\theta}{6-4\theta} -2K\eta.
\end{equation}
Notre hypothèse~$y\leq x^\eta$, une somme dyadique sur~$Q_1'$ et~$Q_2'$, et une intégration par rapport à~$(t_1, t_2, t_3)$ fournissent donc
$$ S'_1 - T'_1 \ll_D x^{1+O(\eta) - \frac12K\eta} \ll_D x^{1-\eta} $$
si~$K$ est suffisamment grande. Ce terme d'erreur est acceptable pour~$y\geq (\log x)^C$, avec~$C=C(\eta)$. Nous concluons en utilisant la borne de Kim-Sarnak~$\theta\leq 7/64$~\cite{Kim} dans~\eqref{eq:expr-delta}, et en choisissant~$\eta$ suffisamment petit.

\end{proof}

\section{Le cas~$d_1\geq 3$}

Dans cette section, nous utilisons ce qui précède pour retrouver des résultats de Khmyrova~\cite{Khmyrova} et Timofeev~\cite{Timofeev}, à l'uniformité en~$Q$ près.

\begin{theoreme}\label{th:Qgen}
Soit~$Q\in\ZZ[X]$ irréductible de degré~$g\geq 3$. Il existe alors~$C>0$ tel que l'on ait
\begin{equation}\label{eq:majo-Qgen}
\begin{aligned}
|\{n\leq x :\ {}& P^+(Q(n))\leq y \}| \leq \e^{O_Q(u)} x\rhod(u)
\end{aligned}
\avoidbreak
\end{equation}
pour~$(\log x)^C \leq y \leq x$. La constante implicite et~$C$ dépendent au plus de~$Q$.
\end{theoreme}

\begin{proof}
Soit~$D$ le discriminant de~$Q$, et~$S$ le membre de gauche de~\eqref{eq:majo-Qgen}. De façon similaire à précédemment, nous nous restreignons à compter la contribution~$S_1$ à~$S$ constituée des entiers~$n$ tels que~$Q(n)/(Q(n), D^\infty)\geq x$. Nous appliquons alors la Proposition~\ref{prop:poids} avec~$\kappa = \operatorname{deg}(Q)$ et~$R = x/y^5$. Nous avons alors
\begin{align*}
S_1 {}&
\leq \sum_q \vartheta(q) \ssum{\lambda\mod{q} \\ Q(\lambda)\equiv 0\mod{q}} \ssum{n\leq x \\ n\equiv \lambda\mod{q}} 1 \\ {}&
\ll x\Big|\sum_q \frac{\vartheta(q)\rhoQ_Q(q)}q\Big| + \ssum{x/y^5 \leq q \leq x/y \\ P^+(q)\leq y} \tau_3(q)\rhoQ_Q(q) \\ {}&
\leq \e^{O_Q(u)}x\rhod(u)
\end{align*}
comme annoncé.
\end{proof}

\section{Application aux entiers~$n$ tels que~$P^+(n)^2|n$}

Dans cette section, nous prouvons le Corollaire~\ref{cor:dkdl}. Notons
$$ \cL := \e^{\sqrt{(\log x)\log\log x}}. $$

\begin{lemme}\label{lem:cL}
Soient~$\ee>0$,~$\ee\leq a<b \leq \ee^{-1}$ et~$\alpha, \beta>0$ donnés. Lorsque~$x$ tend vers l'infini,
$$ \sum_{\cL^a \leq p \leq \cL^b} \frac{\rhod(\alpha\tfrac{\log x}{\log p})}{p^{1+\beta}} \ll_{\ee, \alpha, \beta} \cL^{-\gamma + o(1)},  \qquad \text{avec}\quad \gamma := \inf_{t\in[a, b]}\Big(\frac{\alpha}{2t}+\beta t\Big). $$
\end{lemme}
\begin{proof}
En notant~$u_p = (\log x)/\log p$, nous avons
$$ \sum_{\cL^a \leq p \leq \cL^b} \frac{\rhod(\alpha\tfrac{\log x}{\log p})}{p^{1+\beta}} \ll \cL^{o(1)} \sum_{\cL^a \leq p \leq \cL^b} \frac{p^{-\beta}u_p^{-\alpha u_p}}{p} \ll \cL^{-\gamma+o(1)} \sum_{\cL^a \leq p \leq \cL^b} \frac1{p} \ll \cL^{-\gamma+o(1)}. $$
\end{proof}

\begin{proof}[Démonstration du Corollaire~\ref{cor:dkdl}]
Nous remarquons tout d'abord qu'en procédant à un découpage dyadique, il suffit de montrer la majoration~\eqref{eq:majo-DKDL} avec la contrainte supplémentaire
\begin{equation}
x/2 < n \leq x.\label{eq:n-dyad}
\end{equation}
Notons~$E(x)=|\{x/2 < n \leq x : P^+(n)^2|n, P^+(n+1)^2|n+1\}|$. Pour toute paire de nombres premiers distincts~$(p, q)$, nous notons~$E_{p, q}(x)$ la contribution à~$E(x)$ des entiers~$n$ tels que~$p=P^+(n)$ et~$q=P^+(n+1)$, de sorte que
\begin{equation}\label{eq:E-somme}
E(x) = 2\ssum{p, q \text{ premiers} \\ p< q} E_{p, q}(x).
\end{equation}
Nous avons la borne triviale
$$ E_{p, q}(x) \leq \min\{\Psi(x/p^2, p), \Psi(x/q^2, q)\}. $$
La majoration
$$ \sum_{p\leq \cL^{1/6} \text{ ou } p\geq \cL^{3}} \Psi(x/p^2, p) \ll x\cL^{-3}+\Psi(x, \cL^{1/6}) \ll x\cL^{-2\sqrt{2}} $$
permet donc de restreindre la somme~\eqref{eq:E-somme} à~$p, q \in [\cL^{1/6}, \cL^3]$. Nous fixons dans ce qui suit une telle paire~$(p, q)$.

Le théorème de Bezout nous assure l'existence et l'unicité d'une paire~$(r, s)$ d'entiers tels que~$q^2r-p^2s=1$, avec~$1\leq r \leq p^2$ et~$1\leq s \leq q^2$. Les conditions~$p^2|n$ et~$q^2|n+1$ nous permettent de paramétrer~$n$ sous la forme
$$ n=p^2(s+q^2m), \qquad n+1 = q^2(r+p^2m). $$
Au vu de la condition~\eqref{eq:n-dyad}, nous avons~$x/(3(pq)^2) \leq m \leq x/(pq)^2$. Ainsi,
$$ E_{p, q}(x) \leq \big|\big\{m \in [\tfrac{x}{3(pq)^2}, \tfrac x{(pq)^2}] :\ P^+(q^2m+s)\leq p, P^+(p^2m+r)\leq q\big\}\big|. $$
Le Théorème~\ref{th:correlation-2lin} (avec~$\Delta=1$) fournit donc
$$ E_{p, q}(x) \ll_\ee \cL^{o(1)} \frac{x}{(pq)^2} \frac{\Psi(x, p)}{x} \frac{\Psi(x^{3/5-\ee}, q)}{x^{3/5-\ee}}. $$

Nous sommons cette borne sur~$(p, q)$ avec~$p<q$, et notons~$\kappa=3/5-\ee$. Lorsque~$x$ tend vers l'infini, le Lemme~\ref{lem:cL} fournit d'une part
$$ \sum_{p<q \leq \cL^{3}} \frac{\Psi(x^{\kappa}, q)}{q^2 x^{\kappa}} \ll \sum_{p<q \leq \cL^{3}} \frac{\rhod(\kappa\tfrac{\log x}{\log q})}{q^2} \ll
\begin{dcases}
\cL^{-\sqrt{2\kappa}+o(1)}, & (p\leq\cL^{\sqrt{\kappa/2}}), \\
\cL^{o(1)}\frac{\rhod(\kappa\tfrac{\log x}{\log p})}{p}, & (p\geq \cL^{\sqrt{\kappa/2}}),
\end{dcases} $$
et d'autre part
\begin{align*}
&\sum_{\cL^{1/6} \leq p \leq \cL^{\sqrt{\kappa/2}}} \frac{\rhod(\tfrac{\log x}{\log p})}{p^2} \ll \cL^{-\sqrt{1/(2\kappa)}-\sqrt{\kappa/2}+o(1)}, \\
&\sum_{\cL^{\sqrt{\kappa/2}}<p \leq \cL^3} \frac{\rhod(\tfrac{\log x}{\log p}) \rhod(\kappa\tfrac{\log x}{\log p})}{p^3} \ll \cL^{-2\sqrt{1+\kappa}+o(1)}.
\end{align*}
Puisque~$2\sqrt{1+\kappa}\leq (\sqrt{2}+1/\sqrt{2})\sqrt{\kappa}+1/\sqrt{2\kappa}$, nous obtenons
$$ E(x) \ll x\cL^{-2\sqrt{2}} + \ssum{\cL^{1/6} \leq p < q \leq \cL^3} E_{p, q}(x) \ll_\ee \cL^{-2\sqrt{1+\kappa}+o(1)}. $$
Le résultat annoncé suit en faisant tendre~$\ee$ vers~$0$.

\end{proof}

\section{Niveau de répartition de~$\{n^2-D\}$}\label{sec:equidist}

Dans cette section, nous montrons le Théorème~\ref{th:exporep-quad}. Celui-ci découle immédiatement de la proposition suivante, grâce à l'inégalité de Cauchy--Schwarz.
\begin{proposition}\label{prop:equidistrib}
Soient~$\ee>0$, $x, M, N\geq 1$, $MN\leq x^2$,~$(b_n)\in\CC^\NN$ avec~$\|b\|_\infty \leq 1$, $D\in\ZZ$ qui n'est pas un carré d'entier, et~$V:\RR\to\CC$ une fonction lisse à support compact inclus dans~$\RR_+^\ast$. Alors
\begin{equation}
\begin{aligned}
\sum_{M< m \leq 2M} \Big|\ssum{N< n \leq 2N \\ (n, m)=1} b_n {}& \Big(\ssum{k\in\NN \\ mn | k^2 - D} V\Big(\frac kx\Big) - x{\hat V}(0) \frac{\rhoQ(mn)}{mn}\Big)\Big|^2 \\
& \ll_{\ee, V, D} \Big(1 + x\Big(\frac M{N^2}\Big)^{-\frac32+\theta}\Big)x^{1+\ee}.
\end{aligned}
\label{eq:equidistrib}
\end{equation}
\end{proposition}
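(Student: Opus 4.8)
The plan is to follow the approach of T\'oth~\cite{Toth}, reducing~\eqref{eq:equidistrib} to a sum of Kloosterman sums and invoking at the last step the extension~\cite{D-Kuz} of the Deshouillers--Iwaniec bounds, in which the dependence on the admissible exponent~$\theta$ towards the Ramanujan--Selberg conjecture is made explicit. First I would detect the condition~$mn\mid k^2-D$ by writing the inner sum over~$k$ as a sum over the residue classes~$\nu\bmod mn$ with~$\nu^2\equiv D$, and applying Poisson summation in~$k$ to each class:
$$ \ssum{k\in\NN\\ mn\mid k^2-D}V\Big(\frac kx\Big) = \frac{x}{mn}\sum_{h\in\ZZ}\hV\Big(\frac{hx}{mn}\Big)S_D(h;mn),\qquad S_D(h;q):=\ssum{\nu\bmod q\\ \nu^2\equiv D}\e\Big(\frac{h\nu}{q}\Big). $$
The term~$h=0$ contributes exactly~$x\hV(0)\rhoQ(mn)/(mn)$ and thus cancels the main term, so the quantity inside the absolute value in~\eqref{eq:equidistrib} equals~$\tfrac xm\sum_n b_n n^{-1}\sum_{h\neq0}\hV(hx/(mn))\,S_D(h;mn)$. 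Since~$\hV$ is Schwartz and~$mn\leq 2MN\leq 2x^2$, the contribution of the terms with~$|h|>x^{\ee}MN/x$ is~$O_A(x^{-A})$ and may be discarded; in particular, if~$MN\leq x^{1-\ee}$ then the left-hand side of~\eqref{eq:equidistrib} is negligible, which already accounts for the summand~$x^{1+\ee}$ in the bound, so I may henceforth assume~$MN>x^{1-\ee}$.

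Next I would square out and sum over~$m$, obtaining (using that~$S_D(h;q)$ is real) a multiple sum over~$m\asymp M$, $n_1,n_2\asymp N$ with~$(n_1n_2,m)=1$, and~$0<|h_i|\leq x^{\ee}MN/x$, of
$$ b_{n_1}\overline{b_{n_2}}\,\frac{x^2}{n_1n_2m^2}\,\hV\Big(\frac{h_1x}{mn_1}\Big)\overline{\hV\Big(\frac{h_2x}{mn_2}\Big)}\;S_D(h_1;mn_1)\,S_D(h_2;mn_2). $$
After a standard preliminary reduction of the moduli to the squarefree case coprime to~$2D$ (the ramified remainder being~$O(x^{1+\ee})$), the Chinese remainder theorem factors~$S_D(h;mn)=S_D(h\bar n;m)\,S_D(h\bar m;n)$, and completing the square in the classical Gauss-sum evaluation gives~$S_D(a;m)=\varepsilon_m m^{-1/2}\ssum{t\bmod m\\(t,m)=1}\big(\tfrac tm\big)\,\e\big(\tfrac{-tD-\overline{4t}\,a^2}{m}\big)$. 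Substituting this for the two~$m$-dependent factors and changing variables turns~$S_D(h_1\bar n_1;m)\,S_D(h_2\bar n_2;m)$ into
$$ \varepsilon_m^2\,\frac1m\ssum{s\bmod m\\(s,m)=1}\Big(\frac sm\Big)\,S\Big(-(1+s)D,\ -\overline 4\big((h_1\bar n_1)^2+\bar s\,(h_2\bar n_2)^2\big);\,m\Big), $$
a short combination of Kloosterman sums; the residual factors~$S_D(h_i\bar m;n_i)$, which depend on~$m$ only through~$\bar m\bmod n_i$, are treated likewise. Separating the smooth weight~$m^{-2}\hV(\cdot)\,\overline{\hV(\cdot)}$ into pure powers of~$m$ by Mellin inversion, one is left with a sum of Kloosterman sums of exactly the shape handled by the Kuznetsov formula.

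It remains to apply the estimate of~\cite{D-Kuz} to this sum of Kloosterman sums in the variables~$m$, $n_1$, $n_2$, $h_1$, $h_2$ (and~$s$). The diagonal contribution --- where the Kloosterman sums degenerate, essentially when~$h_1n_2=\pm h_2n_1$ --- together with the error terms collected above is estimated trivially and yields the term~$x^{1+\ee}$; the genuinely off-diagonal part is where the exponent~$\theta$ enters the bound, and after inserting the ranges~$m\asymp M$, $n_i\asymp N$, $|h_i|\ll MN/x$ it produces the term~$x^{1+\ee}\cdot x(M/N^2)^{-3/2+\theta}$. I expect the main obstacle to be precisely this last bookkeeping: tracking the numerous~$\gcd$ conditions created by the two applications of the Chinese remainder theorem, handling cleanly the primes dividing~$2D$ and the non-squarefree~$m$, normalising the resulting Kloosterman-sum sum into the exact form required by~\cite{D-Kuz}, and checking that the exponent of~$M/N^2$ that comes out is exactly~$3/2-\theta$ --- which depends on making the correct choice of which variable plays the role of the modulus and which that of the spectral parameter in the Kuznetsov formula.
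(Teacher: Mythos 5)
Your sketch diverges structurally from the paper's proof, and at the decisive step it has a genuine gap that I do not think is mere bookkeeping.

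The paper does \emph{not} go through Poisson summation in~$k$ followed by a Gauss-sum evaluation of~$S_D(h;q)=\sum_{\nu^2\equiv D\,(q)}\e(h\nu/q)$. Instead it applies the dispersion method directly to the left-hand side (the expansion~$S_1-2x\overline{\hat V(0)}\Re S_2+|x\hat V(0)|^2S_3$ in~\eqref{eq:dispersion}), parametrises~$k_j=m\ell_j+v$, uses the combinatorial Lemme~\ref{lemme:congruence} to merge the congruence conditions at the two moduli, and reduces each~$S_j$ to the ``type~I'' equidistribution Lemme~\ref{lemme:type1}, which counts pairs~$(m,\Omega)$ with~$\Omega^2\equiv D\ (mq)$. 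That lemma, in turn, is proved via the Gauss correspondence between roots of~$\nu^2\equiv D$ and~$\Gamma_0(qd)$-orbits of binary quadratic forms of discriminant~$D$ (Hooley, Duke--Friedlander--Iwaniec, T\'oth). Only after passing through that correspondence do genuine Kloosterman sums~$S_{\infty\ca}(h,n;\gamma)$ appear, with frequencies~$h,n$ \emph{independent} of the modulus~$\gamma$, which is exactly what the Kuznetsov formula and the large-sieve inequalities of~\cite{D-Kuz,DI} require. You announce that you will follow T\'oth, but the route you actually describe --- Poisson in~$k$, CRT, then the quadratic Gauss-sum evaluation of~$S_D(a;m)$ --- is the classical route of Iwaniec's 1978 paper, not T\'oth's.

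The gap is at the end. After you substitute the Gauss-sum expression and change variables, the Kloosterman sum you obtain is
$$ S\Big(-(1+s)D,\ -\overline 4\big((h_1\bar n_1)^2+\bar s(h_2\bar n_2)^2\big);\ m\Big), $$
weighted by a Jacobi symbol~$(\tfrac sm)$ and summed over~$s\ (\mathrm{mod}\ m)$, and the residual factors~$S_D(h_i\bar m;n_i)$ bring in~$\bar m\ (\mathrm{mod}\ n_i)$ as well. In other words, the two ``frequencies'' of your Kloosterman sum, and the Sali\'e-type twist, all depend on the modulus~$m$ over which you need to sum. This is \emph{not} the shape handled by the Kuznetsov formula or by~\cite{D-Kuz}: those require a sum~$\sum_{c\equiv 0(q)}\sum_{m,n}a_m b_n S(m,n;c)g(\cdot)$ in which the frequencies~$m,n$ run freely, independently of~$c$. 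There is no ``correct choice of which variable plays the role of the modulus'' that makes your expression fit; the mismatch is structural, which is precisely why Iwaniec was limited to the Weil bound (exponent~$1/15$) and why this paper switches to the Gauss correspondence for quadratic forms to unlock the spectral input. Your sketch would reproduce Iwaniec's bound, not the one claimed in~\eqref{eq:equidistrib}. You are also missing the two auxiliary lemmas (the congruence bijection and the type-I equidistribution estimate) around which the paper's dispersion computation is organised, and the reduction to moduli coprime to~$2D$ is more delicate than ``standard'' since~$(b_n)$ is not assumed squarefree-supported --- the paper devotes a separate subsection to it and modifies Iwaniec's construction accordingly.
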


\begin{remarque}
Le membre de gauche de~\eqref{eq:equidistrib} est majoré trivialement par~$M^{-1}x^{2+\ee}$, ce qui permet en particulier de supposer d'emblée que~$M\geq N^2$, et justifie l'intérêt d'avoir une valeur de~$\theta$ aussi petite que possible.
\end{remarque}

La proposition précédente sera déduite du lemme suivant, qui concerne l'équirépartition des racines de congruences quadratiques.

\begin{lemme}\label{lemme:type1}
Soient~$(q, r, d)\in\NN^3$ avec~$(q, 2Dr)=1$ et~$d|q$, $\lambda\mod{d}$ une classe inversible, et~$\omega\mod{d}$ une classe de résidus telle que~$\omega^2\equiv D\mod{d}$. Soient~$M\gg qd$, $f$ une fonction lisse à support compact inclus dans~$\RR_+^\ast$, satisfaisant
\begin{equation}
\|f^{(j)}\|_\infty \ll_j 1,\label{eq:majo-der-f}
\end{equation}
soient~$0\leq \alpha < \beta < 1$ et
\begin{equation}
P_f(M ; q, r, d, \lambda, \omega, \alpha, \beta) := \ssum{(m, \Omega)\in \cD \\ \alpha \leq \frac\Omega{mq} < \beta} f\Big(\frac{m}{M}\Big),\label{eq:def-Pf}
\end{equation}
où~$\cD$ est l'ensemble des paires~$(m, \Omega)$ telles que
$$ (m, qr)=1, \quad m\equiv \lambda\mod{d} $$
$$ \Omega^2\equiv D\mod{mq}, \quad \Omega \equiv \omega \mod{d}. $$
Alors pour tout~$\ee>0$, l'on a
\begin{equation}
P_f(M ; q, r, d, \lambda, \omega, \alpha, \beta) = A_f(M ; q, r, d, \alpha, \beta) + O_{\ee, D, f}\big((qrM)^\ee d^{\frac34}(qd)^{\ud-\theta}M^{\ud+\theta}\big).\label{eq:type1}
\end{equation}
Ici, le terme principal~$A_f$ est défini par
$$ A_f(M ; q, r, d, \alpha, \beta) = (\beta-\alpha)M{\hat f}(0) C_D \frac{A(qr)\rhoQ(q/(q, d^\infty))}{\vphi(d)}, $$
où~$A(qr) = \prod_{p|qr}(1+1/p)^{-1}$ et~$C_D>0$ est une constante qui ne dépend que de~$D$. La constante implicite ne dépend que de~$\ee$,~$D$, et des constantes implicites dans~\eqref{eq:majo-der-f}.
\end{lemme}

Ce résultat découle des majorations de sommes d'exponentielles suivantes.
\begin{lemme}\label{lemme:somme-expo}
Sous les hypothèses et notations du Lemme~\ref{lemme:type1}, les majorations suivantes ont lieu pour tout~$\ee>0$.
\begin{enumerate}
\item  Pour~$1\leq |h| \leq qd^\ud$,
\begin{equation}
\sum_{(m, \Omega)\in \cD} f\Big(\frac{m}{M}\Big)\e\Big(\frac{h\Omega}{mq}\Big) \ll_{\ee, D, f} |h| (qr)^\ee + (rM)^\ee d^{\frac34}(qd, h)^\theta (qd)^{\ud-\theta} M^{\ud+\theta}. \label{eq:majo-expo}
\end{equation}
\item Pour~$\ud\leq H \ll qM$,
\begin{equation}
\frac1H\sum_{H< |h| \leq 2H} \Big|\sum_{(m, \Omega)\in \cD} f\Big(\frac{m}{M}\Big)\e\Big(\frac{h\Omega}{mq}\Big)\Big| \ll_{\ee, D, f} H (qr)^\ee + (rM)^\ee d^{\frac34} (qd)^{\ud-\theta} M^{\ud+\theta}. \label{eq:somme-expo-moyh}
\end{equation}
\item Supposons~$d=1$,~$\ud\leq Q\ll M$,~$\ud \leq H \ll QM$, que~$t\in[0, 1]$, et soit~$\cI\subset[H, 2H]$ un intervalle, et~$(f_q)_{Q<q\leq 2Q}$ une suite de fonctions de fonctions satisfaisant~\eqref{eq:majo-der-f} et $f_q(v)\neq 0 \Rightarrow v\asymp 1$ uniformément en~$q$. Alors
\begin{equation}
\begin{aligned}
\frac1Q \ssum{Q<q\leq 2Q\\ (q, 2Dr)=1}\Big|\frac1H \sum_{h\in \cI} \e(th){}& \sum_{(m, \Omega)\in \cD} f_q\Big(\frac{m}{M}\Big)\e\Big(\frac{h\Omega}{mq}\Big)\Big| \\
{}& \ll_{\ee, D, f} H (Qr)^\ee + (rM)^\ee\Big\{ M^{\ud} + H^{-\ud} Q^{\ud-\theta} M^{\ud+\theta}\Big\}. \label{eq:somme-expo-moyqh}
\end{aligned}
\end{equation}
\end{enumerate}
Les constantes implicites ne dépendent que de~$\ee$, $D$, et des constantes implicites dans~\eqref{eq:majo-der-f}.
\end{lemme}
\begin{remarques}\hfill
\begin{itemize}
\item Dans le cas~$d=1$, en utilisant la borne de Selberg~$\theta\leq1/4$ (\textit{cf.}~\cite{DI}, theorem~4), nous retrouvons, à l'uniformité en~$h$ près, un résultat de Duke, Friedlander et Iwaniec~\cite[formule~(25)]{DFIWeyl} et T\'{o}th~\cite[formule~(15)]{Toth}.
\item Les majorations~\eqref{eq:majo-expo} et~\eqref{eq:somme-expo-moyh} sont également valables pour~$M\ll qd$, mais elles sont alors moins précises que la majoration triviale~$O_{\ee, D, f}(q^\ee M^{1+\ee})$.
\end{itemize}
\end{remarques}

\subsection{Démonstration du \texorpdfstring{Lemme~\ref{lemme:somme-expo}}{Lemme somme-expo}}

Nous nous concentrons dans un premier temps sur la majoration~\eqref{eq:majo-expo}. Nous supposons sans perte de généralité que~$h>0$, quitte à considérer le nombre complexe conjugué.

\subsubsection{Coprimalité}

Soit~$S'(M, q, \lambda)$ le membre de gauche de~\eqref{eq:majo-expo}. Une inversion de Möbius fournit
\begin{equation}
S'(M, q, \lambda) = \ssum{\ell|qr \\ (\ell, d)=1}\mu(\ell) S(M/\ell, q\ell, \lambda\bar{\ell})\label{eq:mobius}
\end{equation}
où~$\mu$ désigne la fonction de Möbius et
$$ S(M, q, \lambda) := \ssum{m\in\NN \\ m\equiv \lambda\mod{d}}\ssum{\Omega \in \NN \\ \alpha mq\leq \Omega < \beta mq \\ \Omega^2 \equiv D \mod{mq} \\ \Omega \equiv \omega \mod{d}} f\Big(\frac{m}{M}\Big)\e\Big(\frac{h\Omega}{mq}\Big). $$
Il nous suffira donc de montrer que~$S(M, q, \lambda)$ est majorée par le membre de droite de~\eqref{eq:majo-expo}.

\subsubsection{Correspondance de Gauss}

Soit
$$ \sQ_D = \{Q(X, Y) = AX^2+2BXY+CY^2,\ (A, B, C)\in\ZZ^3,\ B^2-AC = D\}. $$
Pour~$Q\in\sQ_D$, nous notons~$(A(Q), B(Q), C(Q))$ les coefficients dans l'écriture ci-dessus. Le groupe~$\Gamma = PSL_2(\ZZ)$ agit sur~$\sQ_D$ par
$$ \sigma Q (x, y) = Q((x, y)\sigma) \qquad (\sigma\in\Gamma) $$
où le produit dans le membre de droite est le produit matriciel. En particulier,
\begin{equation}\label{eq:action-Gamma}
\begin{aligned}
B(\sigma Q) {}&= \alpha\gamma A + (\alpha\delta+\beta\gamma)B + \beta\delta C, \\
C(\sigma Q) {}&= \gamma^2A + 2\gamma\delta B + \delta^2 C = Q(\gamma, \delta)
\end{aligned}
\end{equation}
si~$\sigma=\left(\begin{smallmatrix} \alpha & \beta \\ \gamma & \delta \end{smallmatrix}\right)$. En raisonnant de façon identique à~\cite[p. 427]{DFIWeyl} (voir aussi~\cite[section~6.1]{Kow}), nous obtenons
\begin{equation}\label{eq:gauss}
S(M, q, \lambda) = \sum_{Q\in \Gamma\backslash\sQ_D} \ssum{\sigma\in \Gamma_\infty \backslash\Gamma/\Gamma_Q \\ \cP(\sigma)} f\Big(\frac{C(\sigma Q)}{qM}\Big) \e\Big(\frac{h B(\sigma Q)}{C(\sigma Q)}\Big),
\end{equation}
où~$\Gamma_\infty = \{\left( \begin{smallmatrix} 1 & n \\ 0 & 1 \end{smallmatrix} \right), n\in \ZZ)\}$ et~$\cP(\sigma)$ désigne la propriété
$$ \cP(\sigma) \Leftrightarrow \left\{ \begin{aligned}& C(\sigma Q) \equiv \lambda q \mod{qd}, \\ &B(\sigma Q) \equiv \omega \mod{d} ,\end{aligned}\right. $$
et~$\Gamma_Q\subset \Gamma$ est le stabilisateur de~$Q$.

\subsubsection{Localisation des variables}

Soit~$\sigma=\left(\begin{smallmatrix}\ast&\ast\\\gamma&\delta\end{smallmatrix}\right)$ un élément générique en indice de la somme du membre de droite de~\eqref{eq:gauss}. Nous introduisons, suivant T\'o{th}~\cite[lemme~4.2]{Toth}, une fonction~$\Psi :\Gamma \to \RR$ qui permet d'encoder le quotient par~$\Gamma_Q$. Cette fonction satisfait~$\sum_{\tau\in\Gamma_Q} \Psi(\sigma \tau) = 1$ pour tout~$\sigma \in\Gamma$. Dans le cas~$D<0$, la fonction~$\Psi$ est constante, et dans le cas contraire~$\Psi(\sigma)$ est une fonction~$\cC^\infty$ du rapport~$\delta/\gamma$. Nous nous donnons aussi une fonction lisse~$w:\RR\to\RR_+$ satisfaisant
$$ \1_{|t|\leq \ud}\leq w(t) \leq \1_{|t|\leq 2}, \qquad w(t)+w(1/t)=1 $$
pour~$t\neq 0$. Nous insérons le poids~$w(\gamma/\delta)+w(\delta/\gamma)$ dans le membre de droite de~\eqref{eq:gauss}. Dans la contribution du terme~$w(\gamma/\delta)$, nous appliquons aux sommes sur~$\sigma$ et~$Q$ les involutions changeant~$Q(X, Y)$ en~$\tQ = Q(Y, X)$, et~$\sigma$ en~$\tsigma = \left( \begin{smallmatrix}-\beta & -\alpha \\ \delta & \gamma \end{smallmatrix} \right)$. Nous avons alors
$$ B(\tsigma\tQ) = -B(\sigma Q), \qquad C(\tsigma \tQ) = C(\sigma Q). $$
Nous obtenons
\begin{equation}
S(M, q, \lambda) = S(h, \Psi_1) + S(-h, \Psi_2),\label{eq:SMq-Shpsi}
\end{equation}
avec
\begin{equation}
(\Psi_1(\sigma), \Psi_2(\sigma)) = (w(t)\Psi(t), w(t)\Psi(1/t)) \qquad (\sigma = \left(\begin{smallmatrix}\ast&\ast\\\gamma&\delta\end{smallmatrix}\right) \in \Gamma_\infty\backslash\Gamma, \quad t=\gamma/\delta),\label{eq:def-psi12}
\end{equation}
\begin{equation}
F_{\Psi, Q}(\sigma) = \Psi(\sigma) f\Big(\frac{C(\sigma Q)}{qM}\Big),\label{eq:def-Fpsi}
\end{equation}
et
\begin{equation}
S(h, \Psi) = \sum_{Q\in \Gamma\backslash\sQ_D}\ssum{\sigma\in \Gamma_\infty \backslash\Gamma \\ \cP(\sigma)} F_{\Psi, Q}(\sigma) \e\Big(\frac{h B(\sigma Q)}{C(\sigma Q)}\Big). \label{eq:def-Shpsi}
\end{equation}
Nous fixons dorénavant~$\Psi\in\{\Psi_1, \Psi_2\}$, en notant que cette fonction (donc la fonction~$F_{\Psi, Q}$ associée) est nulle dès que~$|t|\geq 2$ (avec la notation~\eqref{eq:def-psi12}).

\subsubsection{Simplification de la phase}

Nous écrivons la définition~\eqref{eq:def-Shpsi} comme~$S(h, \Psi) = \sum_{Q\in\Gamma\backslash\sQ_D} S_Q(h, \Psi)$. La somme sur~$Q$ est finie et son nombre de termes dépend au plus de~$D$. Il nous suffira donc de borner~$S_Q(h, \Psi)$ séparément pour chaque~$Q$. Pour~$\sigma\in\Gamma$, nous définissons
$$ \phi_\sigma = \frac\alpha\gamma \in\RR/\ZZ \qquad (\sigma = \left(\begin{smallmatrix} \alpha & \ast \\ \gamma & \ast \end{smallmatrix}\right), \quad \gamma\neq 0). $$
L'identité, due à Hooley~\cite[formule~(27)]{HooleyQuad},
\begin{equation}
\e\Big(\frac{hB(\sigma Q)}{C(\sigma Q)}\Big) = \e(h\phi_\sigma) + O(h(qM)^{-1})\label{eq:approx-triv-exp}
\end{equation}
est alors établie de façon similaire au lemme 4.3 de Toth~\cite{Toth}. En remplaçant dans~$S_Q(h, \Psi)$, nous obtenons
\begin{equation}
S_Q(h, \Psi) = T_Q(h, F_{\Psi, Q}) + O(h),\label{eq:SQ-TQ}
\end{equation}
avec
$$ T_Q(h, F) = \ssum{\sigma\in \Gamma_\infty \backslash\Gamma \\ \cP(\sigma)} F(\sigma)\e(h\phi_\sigma). $$

\subsubsection{Conditions de congruence}

Nous décomposons par le sous-groupe de congruence de Hecke~$\Gamma_0(qd)$ pour obtenir
$$ T_Q(h, \Psi) = \ssumm{\tau \in \Gamma_\infty \backslash\Gamma_0(qd) \\ \sigma \in \Gamma_0(qd) \backslash \Gamma \\ \cP(\tau\sigma)} F(\tau\sigma)\e(h \phi_{\tau\sigma}). $$
Si~$\tau = \left(\begin{smallmatrix} \alpha & \beta \\ \gamma & \delta \end{smallmatrix} \right)\in\Gamma_0(qd)$, alors les relations~\eqref{eq:action-Gamma} ainsi que~$qd|\gamma$ montrent que
$$ \cP(\tau\sigma) \Leftrightarrow \left\{ \begin{aligned} & q \mid C(\sigma Q), \\ & \delta^2 q^{-1} C(\sigma Q) \equiv \lambda \mod{d}, \\ & B(\sigma Q) \equiv \omega \mod{d}. \end{aligned} \right. $$
Puisque~$(\lambda, d)=1$, la seconde condition est détectée par des caractères de Dirichlet, ce qui fournit
\begin{equation}
T_Q(h, \Psi) = \frac1{\vphi(d)}\sum_{\chi \mod{d}} \bar{\chi(\lambda)} \ssum{\sigma \in \Gamma_0(qd) \backslash \Gamma \\ \cP^*(\sigma)}\chi(q^{-1}C(\sigma Q)) U_{Q, \sigma}(h, \Psi),\label{eq:def-TQ}
\end{equation}
avec
$$ U(h, \Psi) = U_{Q, \sigma}(h, \Psi) = \sum_{\tau \in \Gamma_\infty \backslash\Gamma_0(qd)} \bar{\vartheta(\tau)} F(\tau\sigma) \e(h \phi_{\tau\sigma}), $$
où~$\cP^*(\sigma)$ désigne maintenant les conditions
\begin{equation}
\cP^*(\sigma) \Leftrightarrow \left\{\begin{aligned} & q\mid C(\sigma Q) \\ & B(\sigma Q) \equiv \omega \mod{d} \end{aligned} \right.\label{eq:cond-Pstar}
\end{equation}
et~$\vartheta$ dénote le caractère central défini par
$$ \vartheta(\tau) = \bar{\chi}^2(\delta) \qquad (\tau = \left(\begin{smallmatrix} \ast & \ast \\ \ast & \delta \end{smallmatrix} \right) \in \Gamma_0(qd)). $$

\subsubsection{Rappels concernant les sommes de Kloosterman généralisées}

Dans cette section, nous rappelons quelques faits sur les sommes de Kloosterman. Nous référons aux chapitres 2 et 4 de~\cite{Iwaniec-Topics} pour les définitions. Soient deux pointes~$\ca, \cb \in \PP^1(\RR)$ pour l'action de~$\Gamma_0(qd)$, de stabilisateurs~$\Gamma_\ca$ et~$\Gamma_{\cb}$ et matrices d'échelle~$\sigma_\ca, \sigma_\cb \in PSL_2(\RR)$, c'est-à-dire telles que
$$ \Gamma_\ca = \sigma_{\ca}\Gamma_{\infty}\sigma_{\ca}^{-1}, \qquad \Gamma_\cb = \sigma_{\cb}\Gamma_{\infty}\sigma_{\cb}^{-1}. $$
Une pointe est équivalente sous l'action de~$\Gamma_0(qd)$ à une unique pointe~$\ca'=u/v$, avec
$$ v\geq 1, \quad v|qd, \quad(u, v)=1, \quad 1\leq u \leq (v, qd/v). $$
Nous pouvons donc définir la \textit{largeur} de la pointe~$\ca$ comme le nombre
\begin{equation}
w_\ca = \frac{q}{(q, v^2)}.\label{eq:def-largeur-pointe}
\end{equation}

Nous associons à~$(\ca, \cb)$ l'ensemble de modules
$$ \cC(\ca, \cb) := \Big\{c\in\RR_+^* :\ \exists \alpha, \beta, \delta \in\RR, \begin{pmatrix} \alpha & \beta \\ \gamma & \delta \end{pmatrix} \in \sigma_\ca^{-1}\Gamma\sigma_\cb \Big\} .$$
Pour tout~$c\in\cC(\ca, \cb)$ et~$(m, n) \in \ZZ^2$, nous définissons la somme de Kloosterman
\begin{equation}
S_{\ca\cb}(m, n ; \gamma) = \ssum{\delta\in[0, \gamma\mc{[} \ : \\  \left(\begin{smallmatrix}\alpha & \ast \\ \gamma & \delta\end{smallmatrix}\right) \in \sigma_\ca^{-1}\Gamma_0(qd) \sigma_{\cb}} \bar{\vartheta}(\sigma_\ca \big(\begin{smallmatrix}\alpha & \ast \\ \gamma & \delta\end{smallmatrix}\big) \sigma_\cb^{-1})\e\Big(\frac{\alpha m+\delta n}{\gamma}\Big).\label{eq:def-Kloo}
\end{equation}
Nous renvoyons à la section~4.1.1 de~\cite{D-Kuz} pour plus de détails, notamment sur la dépendance de~$S_{\ca\cb}(m, n ; c)$ vis-à-vis des matrices d'échelle~$(\sigma_\ca, \sigma_\cb)$. Nous utiliserons dans ce travail les faits suivants.

\begin{lemme}\label{lemme:pointe}
Soit~$\sigma\in\Gamma_0(qd) \backslash \Gamma$ satisfaisant les conditions~$\cP^\ast(\sigma)$ définies en~\eqref{eq:cond-Pstar}.
\begin{enumerate}
\item Le nombre de telles classes~$\sigma$ est majoré par~$O(d\tau(q))$.
\item Supposons que la pointe~$\ca = \sigma\infty$ soit équivalente à~$u/v$, avec~$v|q$,~$1\leq u<v$ et~$(u, v)=1$. Alors~$v|Q(0, 1)^2$, en particulier~$v=O_Q(1)$, et
\begin{equation}
w_\ca = \frac{qd}{(qd, v^2)} \asymp_Q qd.\label{eq:taille-wca}
\end{equation}
\item L'ensemble de modules~$\cC(\infty, \ca)$ s'écrit
$$ \cC(\infty, \ca) = \big\{w_\ca^{\ud}v m,\ m\in\ZZ:\ (m, qd/v)=1 \big\}. $$
\item Lorsque~$\gamma=w_\ca^{\ud}vm \in \cC(\infty, \ca)$, la somme de Kloosterman~$S_{\infty\ca}(h, n ; \gamma)$ admet l'expression
$$ S_{\infty\ca}(h, n ; \gamma) = \ssum{\alpha\mod{vm} \\ \delta\mod{u[v, v']m} \\ \delta\equiv m \mod{uv'} \\ (\delta-m, um) = u \\ \alpha\delta \equiv u\mod{vm}} \bar{\vartheta}\big(\begin{smallmatrix}\ast & \ast \\ \ast & \delta\end{smallmatrix}\big)\e\Big(\frac{h\alpha}{vm} + \frac{n\delta}{u[v, v']m}\Big), $$
où l'on a noté~$v' = qd/v$. Ici, les matrices d'échelles choisies sont
$$ \sigma_\infty = {\rm Id}, \qquad \sigma_{\ca} = \begin{pmatrix} u\sqrt{w_\ca} & 0 \\ v\sqrt{w} & (u\sqrt{w})^{-1} \end{pmatrix}. $$
\item Nous avons la borne triviale
\begin{equation}
|S_{\infty\ca}(h, n ; \gamma)| \leq \frac{v}{(v, v')}(m, u) m \ll_Q m,\label{eq:majotriv-kloosterman}
\end{equation}
\item Lorsque~$n=0$, nous avons
\begin{equation}\label{eq:majogauss-kloosterman}
|S_{\infty\ca}(h, 0 ; \gamma)| \leq \tau(2m)^{O_{\ca, Q}(1)} (dh, m).
\end{equation}
\end{enumerate}
\end{lemme}

La démonstration de ce lemme, qui est indépendante du reste de la démonstration du Lemme~\ref{lemme:somme-expo}, est reportée à la section~\ref{sec:demo-pointes}.

\subsubsection{Complétion de sommes}

Dans la somme~$U(h, \Psi)$, nous changeons~$\tau$ en~$\tau\sigma^{-1}$, de sorte que
\begin{equation*}
U(h, \Psi) = \sum_{\tau \in \Gamma_\infty \backslash\Gamma_0(qd)\sigma} \vartheta(\tau\sigma^{-1}) F(\tau) \e(h\phi_\tau).
\end{equation*}
La pointe~$\ca = \sigma\infty$ est équivalente à~$u/v$ pour un certain~$v|qd$ et~$(u, v)=1$, et cette écriture est unique si l'on impose~$1\leq u \leq (v, qd/v)$. Nous posons temporairement
$$ \tau_\ca = \begin{pmatrix} w_\ca^{1/2} & 0 \\ 0 & w_\ca^{-1/2}\end{pmatrix}, \qquad \sigma_\ca = \sigma \tau_\ca $$
de sorte que le stabilisateur~$\Gamma_\ca\subset\Gamma_0(qd)$ de~$\ca$ vérifie~$\Gamma_\ca = \sigma_\ca \Gamma_\infty \sigma_\ca^{-1}$. Dans la somme du membre de droite de~\eqref{eq:reecr-Uhpsi}, nous remplaçons encore~$\tau$ par~$\tau\tau_\ca^{-1}$ en remarquant que cela laisse la quantité~$\phi_\tau$ inchangée. Nous obtenons
\begin{equation}
U(h, \Psi) = \sum_{\tau \in \Gamma_\infty \backslash\Gamma_0(qd)\sigma_\ca} \vartheta(\tau\sigma_\ca^{-1}) F(\tau\tau_\ca^{-1}) \e(h\phi_\tau).\label{eq:reecr-Uhpsi}
\end{equation}

À ce stade, nous remarquons que~$(v, qd/v)|q$. En particulier, la pointe~$\ca$ est singulière pour~$\vartheta$, ce qui signifie
$$ \vartheta(\tau) = 1 \qquad (\tau \in \Gamma_\ca). $$
Nous séparons la somme sur~$\tau$ dans le membre de droite de~\eqref{eq:reecr-Uhpsi} suivant les classes à droite modulo~$\Gamma_\infty$. Notons que pour tout~$\omega\in \Gamma_\infty$, nous avons~$\phi_{\tau\omega} \equiv \phi_\tau\mod{1}$, ainsi que
$$ \vartheta(\tau\omega\sigma_\ca^{-1}) = \vartheta(\tau\sigma_\ca^{-1})\vartheta(\sigma_\ca\omega\sigma_\ca^{-1}) = \vartheta(\tau\sigma^{-1}). $$
Nous obtenons
$$ U(h, \Psi) = \sum_{\tau \in \Gamma_\infty \backslash\Gamma_0(qd)\sigma_\ca / \Gamma_\infty} \vartheta(\tau\sigma_\ca^{-1}) \e(h\phi_\tau) \sum_{k\in\ZZ} F\left(\tau\left(\begin{smallmatrix} 1 & k \\ 0 & 1\end{smallmatrix} \right)\tau_\ca^{-1}\right). $$
Étant données les relations~\eqref{eq:def-psi12} et~\eqref{eq:def-Fpsi}, la fonction
$$ t \mapsto F\Big(\tau\begin{pmatrix} 1 & t \\ 0 & 1\end{pmatrix}\tau_\ca^{-1}\Big) $$
est lisse, à support compact, et ne dépend que de la ligne inférieure de~$\tau$. Si~$\tau = \left(\begin{smallmatrix}\ast & \ast \\ \gamma & \delta\end{smallmatrix}\right)$, alors
$$ F\Big(\tau\begin{pmatrix} 1 & t \\ 0 & 1\end{pmatrix}\tau_\ca^{-1}\Big) = F\Big( \begin{pmatrix} \ast & \ast \\ \gamma w_\ca^{-1/2} & \gamma(t+\delta/\gamma) w_\ca^{1/2} \end{pmatrix}\Big). $$
La formule de Poisson fournit
$$ \sum_{k\in\ZZ} F\Big(\tau\begin{pmatrix} 1 & k \\ 0 & 1\end{pmatrix}\tau_\ca^{-1}\Big) = \e\Big(\frac{n\delta}{\gamma}\Big) \sum_{n\in\NN} G(\gamma, n), $$
où
$$ G(\gamma, n) = \int_\RR F\Big( \begin{pmatrix} \ast & \ast \\ \gamma w_\ca^{-\ud} & \gamma t w_\ca^{\ud} \end{pmatrix} \Big) \e(-nt)\dd t. $$
En utilisant la définition~\eqref{eq:def-Kloo}, nous obtenons finalement
\begin{equation}
U(h, \Psi) = \sum_{n\in\ZZ} \sum_{\gamma \in \cC(\infty, \ca)} S_{\infty \ca}(h, n ; \gamma) G(\gamma, n).\label{eq:Uhpsi-poisson}
\end{equation}

\subsubsection{Localisation et préparation des variables}

Nous rappelons que~$w_\ca \asymp_Q qd$. Par définition de~$F = F_{\Psi, Q}$, nous avons
$$ G(\gamma, n) = \int_\RR \Psi(tw_\ca) f\Big(\frac{Q(\gamma, \gamma t w_\ca)}{qw_\ca M}\Big)\e(-nt)\dd t. $$
Lorsque l'intégrant est non-nul, nous avons~$|t|\leq 2w_\ca^{-1}$ et~$\gamma\asymp_{Q, f} q(dM)^{\ud}$.  En intégrant par parties (\textit{cf.} le lemme 5.1 de~\cite{Toth}), il vient
\begin{equation}
G(\gamma, n) \ll_j (qd)^{j-1}n^{-j} \qquad (j\in\NN).\label{eq:majo-G}
\end{equation}
Cette majoration dépend aussi des constantes implicites dans~\eqref{eq:majo-der-f}~; cette dépendance ne sera pas explicitée afin d'alléger la notation.

Posons~$N_1 := qd(Mq)^\eta$. Dans le membre de droite de~\eqref{eq:Uhpsi-poisson}, nous isolons la contribution~$U_0$ (resp. $U_1$) provenant de~$n=0$ (resp. $|n|>N_1$). Les bornes~\eqref{eq:majotriv-kloosterman}, \eqref{eq:majogauss-kloosterman} et~\eqref{eq:majo-G} permettent d'écrire
\begin{equation}\label{majo-U0-U1}
\begin{aligned}
|U_0(h, \Psi)| {}&\ll_{\ee, D} (qd)^{-1}\ssum{\gamma\in\cC(\infty, \ca) \\ \gamma \asymp q(dM)^{1/2}} |S_{\infty\ca}(h, 0 ; \gamma)| \ll (Mqh)^\ee d^{-1}q^{-\ud}M^{\ud}, \\
|U_1(h, \Psi)| {}&\ll_{j, D} (qM)^{3/2}\Big(\frac{qd}N\Big)^{j-1} \ll_{\eta, D} (qM)^{-10}
\end{aligned}
\end{equation}
en choisissant~$j=j(\eta)$ suffisamment grand. Ces deux termes d'erreur sont bien de l'ordre du membre de droite de~\eqref{eq:majo-expo}.

D'autre part, la formule de Fa\`{a} di Bruno montre que la fonction~$G$ vérifie
$$ \frac{\partial^{k+\ell}}{\partial x^{\ell_1} \partial y^{\ell_2}}G(x, y)\Big|_{\substack{x=\gamma\\y=n}} \ll_{\ell_1, \ell_2} \gamma^{-\ell_1} (qd)^{-\ell_2-1}. $$
De même que~\eqref{eq:majo-G}, cette majoration dépend également des constantes implicites dans~\eqref{eq:majo-der-f}.

Nous introduisons une partition de l'unité pour la variable~$n$,
$$ G(\gamma, n) = \ssum{0\leq k \leq K} G_{2^k}(\gamma, n), $$
où~$K\leq 2+\log(N_1)/\log 2$, et pour~$1\leq N \leq N_1$, la fonction~$G_N(\gamma, n)$ est lisse par rapport aux deux variables, nulle en dehors de~$n\in[N/2, 2N]$ et satisfait
\begin{equation}
\frac{\partial^{k+\ell}}{\partial x^k \partial y^\ell}G_N(x, y)\Big|_{\substack{x=\gamma\\y=n}} \ll (qd)^{-1} \gamma^{-k} (\min\{qd, N\})^{-\ell} \ll (qM)^{O(\eta\ell)} (qd)^{-1}\gamma^{-k} N^{-\ell}. \label{eq:majo-GN-der}
\end{equation}
En accord avec cette décomposition, nous écrivons
\begin{equation}
\sum_{0<|n|\leq N_1} \sum_{\gamma\in\cC(\infty, \ca)} S_{\infty\ca}(h, n ; \gamma) G(\gamma, n) = \sum_{0\leq k \leq K} V_{2^k},\label{eq:decomp-unite}
\end{equation}
\begin{equation}
V_N = \sum_{N/2 \leq |n| \leq 2N} \sum_{\gamma\in\cC(\infty, \ca)} S_{\infty\ca}(h, n ; \gamma) G_N(\gamma, n).\label{eq:def-VN}
\end{equation}
Nous posons ensuite
$$ F(x, \xi) = \int_\RR G_N\Big(\frac{4\pi\sqrt{h|y|}}{x}, y\Big)\e(y\xi)\dd y, \quad G_N(\gamma, n) = \int_\RR F\Big(\frac{4\pi \sqrt{h|n|}}{\gamma}, \xi\Big)\e(-n\xi)\dd \xi. $$
L'intégrale définissant~$F$ est à support sur~$y\asymp N$, et lorsque~$F(x, \xi)\neq 0$, nous avons nécessairement~$x\asymp X := (hN/q^2dM)^{\ud}$.
La formule de Fa\`{a} di Bruno implique encore
$$ \partial_{k0}F(x, \xi) \ll_k (qM)^{O(\eta)} X^{-k} \frac{(qd)^{-1}N}{1+(N\xi)^2}. $$
Ici la constante implicite dans~$O(\eta)$ est indépendante de~$k$. Nous posons finalement
$$ \phi_\xi(x) = X(1+(N\xi)^2) qd (xN)^{-1} (qM)^{-\varpi} F(x, \xi) $$
pour un certain réel positif~$\varpi=O(\eta)$, de sorte que la fonction~$x\mapsto \phi_\xi(x)$ soit lisse, à support sur~$x\asymp X$ et satisfasse
\begin{equation}
\sup_{\xi\in\RR} \|\phi_\xi^{(j)}\| \ll_j X^{-j}. \label{eq:majo-derphi}
\end{equation}
Ici encore la borne dépend des constantes implicites dans~\eqref{eq:majo-der-f}. Cela ne sera plus rappelé dans la suite. Nous avons alors
\begin{equation}
V_N = 4\pi (qM)^{\varpi} d^{-\ud}M^{\ud}\int_\RR \frac{N}{1+(N\xi)^2} W_N(\xi) \dd \xi, \label{eq:VN-prep}
\end{equation}
où nous avons posé
\begin{equation}
W_N(\xi) := \sum_{N/2\leq |n| \leq 2N}a_n \sum_{\gamma\in\cC(\infty, \ca)} \frac{S^{(\xi)}_{\infty\ca}(h, n ; \gamma)}{\gamma} \phi_\xi\Big(\frac{4\pi\sqrt{h|n|}}{\gamma}\Big), \label{eq:def-WN}
\end{equation}
ainsi que~$a_n=\sqrt{|n|/N}$, et où nous avons intégré le facteur~$\e(-n\xi)$ dans la matrice d'échelle de~$\infty$ (ce qui est indiqué par la notation~$S^{(\xi)}_{\infty\ca}$).

\subsubsection{Utilisation de la formule de Kuznetsov}\label{sec:kuz}

Nous majorons~$W_N$ pour chaque~$\xi$. Nous omettrons la quantité~$\xi$ de la notation, et n'utiliserons de~$(a_n)$ et~$\phi$ que la borne~$|a_n|\leq 1$, les majorations~\eqref{eq:majo-derphi} et le fait que~$\phi(x)\neq 0$ entraîne~$x\asymp X$, où nous rappelons que~$X \asymp (hN/q^2dM)^{\ud}$.

Pour chaque~$n\in[N/2, 2N]$, nous appliquons la formule de Kuznetsov (lemme 4.5 de~\cite{D-Kuz}, avec~$\kappa=0$). Nous obtenons
\begin{align*}
\sum_{\gamma\in\cC(\infty, \ca)} \frac{S_{\infty\ca}(h, n ; \gamma)}{\gamma} \phi\Big(\frac{4\pi\sqrt{hn}}{\gamma}\Big) {}& = \cH^+_{h, n} + \cE^+_{h, n} + \cM^+_{h, n}, & (n>0)\\
\sum_{\gamma\in\cC(\infty, \ca)} \frac{S_{\infty\ca}(h, n ; \gamma)}{\gamma} \phi\Big(\frac{4\pi\sqrt{h|n|}}{\gamma}\Big) {}& = \cE^-_{h, n} + \cM^-_{h, n}, & (n<0)
\end{align*}
où
$$ \cM^+_{h, n} = \sum_{f\in \cB(q, \chi)} \frac{{\tilde \phi}(t_f)}{\cosh(\pi t_f)} (hn)^{\ud}\bar{\rho_{f\infty}(h)}\rho_{f\ca}(n), $$
et~$\cM^-_{h, n}$,~$\cE^\pm_{h, n}$, $\cH^+_{h, n}$ sont données par des expressions similaires. Ici, l'ensemble~$\cB(q, \chi)$ désigne une base orthonormée de formes de Maass paraboliques~$f$, chacune étant fonction propre du Laplacien hyperbolique, de valeur propre associée~$\lambda_f = \frac14+t_f^2$ et de coefficients de Fourier~$\rho_{f\ca}(n)$. Nous référons à la section 4.1.2 de~\cite{D-Kuz} pour les définitions précises et la normalisation. Nous avons~$t_f\in\RR\cup[-i\theta, i\theta]$, où nous rappelons que~$\theta\leq 7/64$ grâce à Kim et Sarnak~\cite{Kim}, et que la conjecture de Selberg-Ramanujan prédit que~$\theta=0$. La transformée~${\tilde\phi}$ dans l'expression ci-dessus est donnée par
$$ {\tilde\phi}(t) = \frac{2\pi i}{\sinh(\pi t)}\int_0^\infty (J_{2it}(x)-J_{-2it}(x))\phi(x)\frac{\dd x}{x} $$
où~$J_\nu(x)$ désigne la fonction de Bessel. La transformée~${\tilde\phi}$ satisfait les bornes énoncées au lemme~4.4 de~\cite{D-Kuz} (voir le lemme~2.4 de~\cite{Topacogullari} pour des bornes plus précises). Dans le cas présent, nous avons~$X\ll (qM)^{\eta/2}$, donc
$$ |{\tilde\phi}(t)| \ll \begin{cases} (qM)^{2\eta}(1+|t|)^{-3}, & t\in\RR, \\ (Mq)^{\eta/2} (q^2dM/hN)^{|t|}, & t\in[-i/4, i/4]. \end{cases} $$

La quantité~$\cE_{h, n}^{\pm}$ (resp.~$\cH_{h, n}^+$) correspond à la contribution des séries d'Eisenstein non holomorphes (resp. à la contribution des formes holomorphes de poids~$\geq 2$). Nous étudions en détail le cas~$\cM^+$, les autres termes étant traités de façon similaire.

Notre traitement diffère selon que nous prenons la moyenne sur~$h$ ou pas.

\subsubsection{Le cas~$(h, q)$ fixé}\label{sec:final-hfix}

Nous séparons dans~$\cM^+_{h, n}$ la contribution des fonctions~$f\in\cB(q, \chi)$ avec~$t_f\in\RR$, de celles avec~$t_f\in i\RR$. En accord avec cette décomposition, nous écrivons
\begin{equation}
W_N = \sum_{N/2\leq n \leq 2N} a_n \cM^+_{h, n} = \Mrg_{h, N} + \Mex_{h, N},\label{eq:dicho-cM}
\end{equation}
où la notation correspond à ``régulier'' et ``exceptionnel''. L'inégalité de Cauchy-Schwarz fournit
\begin{equation}
|\Mrg_{h, N}|\leq (\Mrg_h \Mrg_N)^{\ud},\label{eq:cauchy-reg}
\end{equation}
avec
\begin{align*}
\Mrg_h {}& := \ssum{f\in\cB(q, \chi) \\ t_f\in\RR}\frac{|{\tilde \phi}(t_f)|}{\cosh(\pi t_f)} h|\rho_{f\infty}(h)|^2, \\
\Mrg_N {}& := \ssum{f\in\cB(q, \chi) \\ t_f\in\RR} \frac{|{\tilde \phi}(t_f)|}{\cosh(\pi t_f)}\Big|\sum_{N/2\leq n \leq 2N} a_n n^{\ud}\rho_{f\ca}(n)\Big|^2.
\end{align*}
Pour majorer~$\Mrg_h$, nous faisons appel au lemme 2.7 de~\cite{Topacogullari}, soit
$$ \Mrg_h \ll_\ee (qhM)^\ee \Big\{1 + (qd, h)^\ud\frac{h^\ud}{qd^\ud}\Big\}. $$
Pour majorer~$\Mrg_N$, nous utilisons l'inégalité de grand crible (proposition 4.7 de~\cite{D-Kuz})
\begin{equation}
\Mrg_N \ll_\ee (qM)^\ee N \Big\{1 + \frac{N}{qd^\ud}\Big\}.\label{eq:majo-Mreg-N}
\end{equation}
Notre hypothèse~$h\ll q$ et~$N\leq N_1$ implique donc
\begin{equation}
\Mrg_{h, N} \ll_\eta (qM)^{O(\eta)} q^\ud d^{\frac34}.\label{eq:contrib-Mreg}
\end{equation}

Pour chaque~$h$, nous avons par l'inégalité de Cauchy-Schwarz
\begin{equation}
|\Mex_{h, N}| \ll (\Mex_h \Mex_N)^{\ud},\label{eq:majo-Mexc-CS}
\end{equation}
\begin{equation}\label{eq:def-CS-exc}
\Mex_h := \ssum{f\in\cB(q, \chi) \\ t_f\in i\RR}|{\tilde \phi}(t_f)|^2 h|\rho_{f\infty}(h)|^2, \quad
\Mex_N := \ssum{f\in\cB(q, \chi) \\ t_f\in i\RR}\Big|\sum_{N/2\leq n \leq 2N} a_n n^{\ud}\rho_{f\ca}(n)\Big|^2 .
\end{equation}
L'inégalité de grand crible fournit encore
\begin{equation}
\Mex_N \ll_\ee (qM)^\ee N \Big\{1 + \frac{N}{qd^\ud}\Big\}.\label{eq:majo-Mexc-N}
\end{equation}
Pour~$\Mex_h$, nous utilisons le lemme 2.9 de~\cite{Topacogullari},
$$ \Mex_h \ll_\eta (qhM)^{O(\eta)} \{(qd, h) M N^{-1}\}^{2\theta}. $$
Notre hypothèse~$N\leq N_1$ implique donc
$$ \Mex_{h, N} \ll_\eta (qhM)^{O(\eta)} d^{\frac14}(qd, h)^\theta M^\theta (qd)^{\ud-\theta}. $$
Le membre de droite de cette inégalité est supérieur à celui obtenu en~\eqref{eq:contrib-Mreg}. Nous obtenons donc au final
$$ \sum_{N/2\leq n \leq 2N} a_n \cM^+_{h, n} \ll_\eta (qhM)^{O(\eta)} d^{\frac14}(qd, h)^\theta M^\theta (qd)^{\ud-\theta}. $$
La même majoration est valable dans le cas de~$\cM^-_{h, n}$, tandis que les termes~$\cE^\pm$ et~$\cH^+$, sont majorés par une quantité de l'ordre du membre de droite de~\eqref{eq:contrib-Mreg}. Nous avons ainsi
$$ W_N \ll_\eta (qM)^{O(\eta)} d^{\frac14}(qd, h)^\theta M^\theta (qd)^{\ud-\theta}. $$

Nous insérons cela dans~\eqref{eq:VN-prep} puis \eqref{eq:decomp-unite}, ce qui fournit, avec les majorations~\eqref{majo-U0-U1} et en choisissant~$\eta>0$ arbitrairement petit,
$$ U(h, \Psi) \ll_\ee (qM)^{\ee} d^{-\frac14}(qd, h)^\theta M^\theta (qd)^{\ud-\theta}. $$
Nous reportons cela dans~\eqref{eq:def-TQ}, en utilisant le point~(i) du Lemme~\ref{lemme:pointe} pour majorer le cardinal de la somme sur~$\sigma$. Nous obtenons
$$ T_Q(h, \Psi) \ll_\ee (qM)^{\ee} d^{\frac34}(qd, h)^\theta M^\theta (qd)^{\ud-\theta}, $$
ce qui fournit la majoration~\eqref{eq:majo-expo} grâce à~\eqref{eq:SQ-TQ},~\eqref{eq:SMq-Shpsi} et~\eqref{eq:mobius} successivement.

\subsubsection{Majoration en moyenne sur~$h$}\label{sec:final-hmoy}

Nous justifions dans cette section la majoration~\eqref{eq:somme-expo-moyh}. Lorsque~$H\leq qd^\ud$, nous nous contentons de prendre la moyenne sur~$h$ de l'estimation~\eqref{eq:majo-expo} établie dans les sections précédentes. Nous supposons donc dorénavant que~$H>qd^\ud$.

Posons~$(c_h) \in \CC^\NN$, $|c_h|\leq1$ une suite telle que
$$ \Big|\sum_{(m, \Omega)\in \cD} f\Big(\frac{m}{M}\Big)\e\Big(\frac{h\Omega}{mq}\Big)\Big| = c_h \sum_{(m, \Omega)\in\cD} f\Big(\frac{m}{M}\Big)\e\Big(\frac{h\Omega}{mq} \Big). $$
Les coefficients~$(c_h)$ dépendent au plus de~$(h, q, d, \lambda, \omega, M, f)$.

Rappelant la définition~\eqref{eq:dicho-cM}, il nous suffira d'établir les majorations
\begin{equation}
\Mrg_{H, N} := \frac1H\sum_{H<h \leq 2H} c_h\Mrg_{h, N} \ll_\eta (qHM)^{O(\eta)} d^{\frac34} q^\ud,\label{eq:majo-Mreg-moyh}
\end{equation}
\begin{equation}
\Mex_{H, N} := \frac1H\sum_{H<h \leq 2H} c_h\Mex_{h, N} \ll_\eta (qHM)^{O(\eta)} d^{\frac14} M^\theta (qd)^{\ud-\theta}.\label{eq:majo-Mexc-moyh}
\end{equation}
Les bornes~\eqref{eq:majo-Mreg-N} et~\eqref{eq:majo-Mexc-N} sont toujours valables en moyenne sur~$h$, puisqu'elles ne dépendent pas de~$h$.

Dans le cas de \eqref{eq:majo-Mreg-moyh}, un raisonnement similaire à~\eqref{eq:cauchy-reg} nous ramène à étudier
$$ \Mrg_H := H^{-2} \ssum{f\in\cB(q, \chi) \\ t_f\in \RR}\frac{|{\tilde \phi}(t_f)|}{\cosh(\pi t_f)} \Big|\sum_{H < h \leq 2H} c_h \sqrt{h}\rho_{f\infty}(h)\Big|^2. $$
L'inégalité de grand crible fournit
$$ \Mrg_H \ll_\ee (qHM)^\ee H^{-1}\Big\{1 + \frac{H}{qd^\ud}\Big\} \ll_\ee (qHM)^\ee, $$
d'où l'on déduit la borne~\eqref{eq:majo-Mreg-moyh}.

Concernant~\eqref{eq:majo-Mexc-moyh}, en raisonnant de façon similaire à~\eqref{eq:majo-Mexc-CS}, nous nous ramenons à étudier
$$ \Mex_{H} := H^{-2} \ssum{f\in\cB(q, \chi) \\ t_f\in i\RR}|{\tilde \phi}(t_f)|^2 \Big|\sum_{H < h \leq 2H} c_h \sqrt{h}\rho_{f\infty}(h)\Big|^2. $$
Nous appliquons l'inégalité de grand crible pour le spectre exceptionnel, lemme 4.8 de~\cite{D-Kuz}, en tirant parti de la borne de Kim-Sarnak (voir la remarque qui précède la section 4.3 de~\cite{D-Kuz}). Nous obtenons
\begin{align*}
\Mex_H {}& \ll_\eta H^{-1}(qM)^{O(\eta)} \Big(1+\Big(\frac{qM}N\Big)^{2\theta}\Big)\Big(1+d^{\ud}\Big(\frac{H}{qd}\Big)^{1-2\theta}\Big) \\
{}& \ll_\eta (qM)^{O(\eta)} (q^2d)^{2\theta-\ud}\Big(\frac MN\Big)^{2\theta}.
\end{align*}
Cela suffit pleinement à démontrer l'estimation~\eqref{eq:majo-Mexc-moyh}. La formule~\eqref{eq:somme-expo-moyh} en est déduite de façon identique au cas~$h$ fixé.

\subsubsection{Majoration en moyenne sur~$h$ et sur~$q$}\label{sec:final-hqmoy}

Supposons maintenant que~$d=1$, et que~$c_h = \e(th)\1_{h\in\cI}$ pour un certain intervalle~$\cI\subset[H, 2H]$. Nous suivons les arguments des sections précédentes, en encodant le facteur~$\e(th)$ dans la matrice d'échelle de~$\infty$, ce qui nous ramène à l'estimation de
$$ \Mex_{H}(q) = H^{-2} \ssum{f\in\cB(q, \1) \\ t_f\in i\RR}|{\tilde \phi}(t_f)|^2 \Big|\sum_{h\in\cI} \sqrt{h}\rho_{f\infty}(h)\Big|^2. $$
Nous sommons cela sur~$q\in[Q, 2Q]$, et utilisons l'inégalité de grand crible pondérée de Deshouillers-Iwaniec, Theorem 7 de~\cite{DI}. Nous obtenons
$$ \frac1Q \sum_{Q<q\leq 2Q} \Mex_{H}(q) \ll_{\ee, \eta} M^{O(\eta)} H^{-1+\ee} \Big\{1 + \frac{H}{Q} + \Big(\frac{M}{N}\Big)^{2\theta}\Big\}. $$
Avec~$2\theta$ remplacé par~$\ud$, cela découle directement du Theorem~7 de~\cite{DI}. L'équation qui précède est aisément justifiée en notant qu'à la conclusion de la preuve du Theorem 7, page 278 de~\cite{DI}, la quantité~$\sqrt{Y/Y_1}$ peut être remplacée par~$(Y/Y_1)^{2\theta}$. La conclusion de la preuve suit de façon identique au cas~$h$ fixé.

\begin{remarques} \hfill
\begin{itemize}
\item Lorsque~$H$ est grand, le terme d'erreur que nous obtenons est légèrement meilleur que celui annoncé en~\eqref{eq:somme-expo-moyh}. Cela n'a pas d'influence pour l'application que nous considérons ici.
\item Les facteurs~$h$ et~$H$ apparaissant aux premiers termes des membres de droite de~\eqref{eq:majo-expo}-\eqref{eq:somme-expo-moyqh} peuvent être améliorés en opérant une intégration par parties au lieu de l'approximation triviale~\eqref{eq:approx-triv-exp}.
\end{itemize}
\end{remarques}

\subsection{Démonstration du \texorpdfstring{Lemme~\ref{lemme:type1}}{Lemme type-1}}

Du Lemme~\ref{lemme:somme-expo}, nous déduisons par une technique standard d'analyse de Fourier l'estimation
\begin{equation}
\begin{aligned}
P_f(M ; q, r, {}& d, \lambda, \omega, \alpha, \beta) \\ & = (\beta-\alpha)P_f(M; q, r, d, \lambda, \omega, 0, 1) + O_{\ee, D, f}((qM)^\ee d^{\frac34} (qd)^{\ud-\theta} M^{\ud+\theta}).
\end{aligned}\label{eq:estim-interm-Pf}
\end{equation}
Nous omettons les détails, qui sont similaires aux pages 179 et 180 de~\cite{Iwaniec}. La seule différence avec notre traitement tient aux termes supplémentaires~$h$ et~$H$ dans les membres de droite de~\eqref{eq:majo-expo} et~\eqref{eq:somme-expo-moyh}, qui imposent le choix~$\Delta = (q+M^\ud)^{-1}$ dans l'argument d'Iwaniec. Cela induit un terme d'erreur supplémentaire de l'ordre de
$$ \Delta^{-1} + M\Delta \ll q + M^\ud \ll q^{\ud-\theta}M^{\ud+\theta}, $$
qui est acceptable.

Nous nous concentrons donc sur le traitement du terme principal. Nous aurons besoin du lemme suivant.
\begin{lemme}\label{lemme:rD}
Soit~$x\in\RR$ avec $x\geq 1$, $D\in\ZZ$ qui n'est pas un carré d'entier, $(q, d)\in\NN^2$ avec~$q\geq 1$, $(q, 2D)=1$, $d|q$ et~$\lambda\mod{d}$ avec~$(\lambda, d)=1$. Notons~$\chi_D = (\frac D\cdot)$ le symbole de Kronecker, et~$\varkappa_D(n) := (1\ast \chi_D)(n)$. Alors
$$ \ssum{n\leq x \\ (n, q)=1 \\ n\equiv \lambda\mod{d}} \varkappa_D(n) = \frac{x}{\vphi(d)}\frac{\vphi(q)}{q}\prod_{p|q}\Big(1-\frac{\chi_D(p)}p\Big) L(1, \chi_D) +O_{\ee, D}(x^{\ud}q^\ee). $$
\end{lemme}
\begin{proof}
Cela découle facilement du principe de l'hyperbole de Dirichlet.
\end{proof}
Rappelons que pour~$p\nmid 2D$, nous avons~$\rhoQ(p) = 1+(\frac Dp) = \varkappa_D(p)$. Nous écrivons~$\rhoQ = \varkappa_D \ast h_D$, de sorte que la fonction~$h_D$ vérifie~$\sum_\ell |h_D(\ell)|\ell^{-\ud-\ee} \ll_{\ee, D} 1$. Lorsque~$M\geq 1$ et~$(\lambda, d)=1$, nous en déduisons, à l'aide du Lemme~\ref{lemme:rD} et d'une intégration par parties,
\begin{align*}
\ssum{(m, q)=1 \\ m\equiv \lambda\mod{d}} f\Big(\frac mM\Big) \rhoQ(m) {}& = \ssum{(\ell, q)=1 \\ \ell \ll M} h_D(\ell) \ssum{(n, q)=1 \\ n\equiv \lambda{\bar{\ell}} \mod{d}} f\Big(\frac{n\ell}M\Big) \varkappa_D(n) \\
{}& = \frac1{\vphi(d)}\frac{\vphi(q)}qL(1, \chi_D) M{\hat f}(0)\ssum{(\ell, q)=1} \frac{h_D(\ell)}{\ell} + O_{\ee, D, f}(q^\ee M^{\ud+\ee}). \numberthis\label{eq:estim-rhom}
\end{align*}
Notons
$$ C_D := L(1, \chi_D)\sum_{\ell\geq 1} \frac{h_D(\ell)}\ell = \sum_{\ell\geq 1} \frac{(\rhoQ \ast \mu)(\ell)}\ell. $$
Nous obtenons
\begin{equation}
L(1, \chi_D) \frac{\vphi(q)}q \sum_{(\ell, q)=1} \frac{h_D(\ell)}{\ell} = C_D \prod_{p|q}\Big(1+\frac1p\Big)^{-1}.\label{eq:cte-q}
\end{equation}

Nous revenons maintenant à l'estimation du terme principal du membre de droite de~\eqref{eq:estim-interm-Pf}. Le théorème des restes chinois et les relations~\eqref{eq:estim-rhom} et~\eqref{eq:cte-q} avec~$q$ remplacé par~$qr$ fournissent
\begin{align*}
P_f(M; q, r, d, \lambda, \omega, 0, 1) {}& = \ssum{(m, qr)=1 \\ m\equiv \lambda\mod{d}} f\Big(\frac mM\Big) \ssum{\Omega \mod{qm} \\ \Omega^2 \equiv D \mod{qm} \\ \Omega \equiv \omega\mod{d}} 1 \\
{}& = \rhoQ_{\omega, d}(q) \ssum{(m, qr)=1 \\ m\equiv \lambda\mod{d}} f\Big(\frac mM\Big) \rhoQ(m) \\
{}& = C_D \prod_{p|qr}\Big(1+\frac1p\Big)^{-1} \frac{\rhoQ_{\omega, d}(q)}{\vphi(d)} M{\hat f}(0) + O_{\ee, D, f}(M^{\ud+\ee}q^\ee),
\end{align*}
où nous avons noté, pour tout~$\omega\mod{d}$ avec~$\omega^2\equiv D\mod{d}$,
$$ \rhoQ_{\omega, d}(q) = \ssum{\Omega \mod{q} \\ \Omega^2 \equiv D \mod{q} \\ \Omega \equiv \omega\mod{d}} 1. $$
Il est aisé de voir que~$\rhoQ_{\omega, d}(q) = \rhoQ(q)$ si~$d=1$, et pour tout~$p\nmid 2D$, $1\leq \delta \leq \nu$, $\rhoQ_{\omega, p^\delta}(p^\nu) = 1$ par le lemme de Hensel. Nous en déduisons que~$\rhoQ_{\omega, d}(q) = \rhoQ(q/(q, d^\infty))$ indépendamment de~$\omega$. Cela conclut la démonstration du Lemme~\ref{lemme:type1}.

\subsection{Démonstration de la \texorpdfstring{Proposition~\ref{prop:equidistrib}}{Proposition equidistrib}}

\subsubsection{Première réduction}\label{sec:premiere-reduction}

Nous remarquons tout d'abord que la borne triviale~$x^{2+\ee}/M$ pour membre de gauche de~\eqref{eq:equidistrib} nous permet de supposer sans perte de généralité que~$x\geq M$.

Dans le but de simplifier la preuve de la Proposition~\ref{prop:equidistrib}, nous nous ramenons à supposer que la suite~$(b_n)$ est à support sur les entiers impairs premiers avec~$D$. Supposons donc dans un premier temps que l'estimation~\eqref{eq:equidistrib} est valable pour de telles suites. Notant
\begin{equation}
r_D(x; q) := \ssum{k\in\NN \\ q | k^2 - D} V\Big(\frac kx\Big) - x{\hat V}(0)\frac{\rhoQ(q)}{q},\label{eq:def-rD}
\end{equation}
nous avons, grâce à l'inégalité de Cauchy-Schwarz et la majoration~$\ssum{1\leq v\leq 2N \\ v|(2D)^\infty} 1 \ll x^\ee$,
\begin{align*}
\sum_{M < m \leq 2M} \Big|\ssum{N< n \leq 2N \\ (n, m)=1} b_n r_D(x ; mn)\Big|^2 
={}& \sum_{M< m \leq 2M} \Big|\ssum{1\leq v\leq 2N \\ v|(2D)^\infty} \ssum{N/v< n \leq 2N/v \\ (n, 2Dm)=1} b_{vn} r_D(x ; vmn)\Big|^2 \\
\ll_\ee{}& x^\ee \sum_{M< m \leq 2M} \ssum{1\leq v\leq 2N \\ v|(2D)^\infty} \Big| \ssum{N/v< n \leq 2N/v \\ (n, 2Dm)=1} b_{vn} r_D(x ; vmn)\Big|^2 \\
\ll_\ee {}& x^\ee \ssum{v\leq 2N \\ v|(2D)^\infty} \sum_{vM< m \leq 2vM} \Big| \ssum{N/v< n \leq 2N/v \\ (n, 2Dm)=1} b_{vn} r_D(x ; mn)\Big|^2.
\end{align*}
La majoration~\eqref{eq:equidistrib} appliquée pour chaque~$v$ au membre de droite fournit l'estimation voulue.

Nous supposons donc dans ce qui suit que la suite~$(b_n)$ est à support sur les entiers~$n$ tels que~$(n, 2D)=1$.

\subsubsection{Interprétation d'une congruence}

Nous suivons les arguments des pages 180-183 de~\cite{Iwaniec}. Pour cela nous devons modifier la construction de la classe~$c\mod{[n_1, n_2]}$, page 183 de~\cite{Iwaniec}, en raison du fait que dans notre situation,~$(b_n)$ n'est pas supposée à support sur les entiers sans facteur carré.
\begin{lemme}\label{lemme:congruence}
Soient~$m, n_1, n_2, \ell_1, \ell_2\geq 1$ donnés, avec~$(2mD, n_1n_2)=1$. Définissons
$$ d=(n_1, n_2)/(n_1, n_2, \ell_1-\ell_2), $$
et supposons que
\begin{equation}
(m(\ell_1-\ell_2))^2 \equiv 4D \mod{d}.\label{eq:cond-congr-m}
\end{equation}
Alors il existe~$c\in\ZZ$, avec~$0\leq c <[n_1, n_2]$, tel que les ensembles
$$ \cD_1 = \Big\{v\in\ZZ\cap[0, m\mc{[} :\ \begin{array}{l} v^2\equiv D\mod{m} \\ (m\ell_j+v)^2 \equiv D \mod{n_j} \quad (j\in\{1, 2\}) \end{array} \Big\} $$
et
$$ \cD_2 = \Big\{\Omega\in\ZZ\cap[cm, (c+1)m\mc{[} :\ \begin{array}{l} \Omega^2\equiv D\mod{m[n_1, n_2]} \\ \Omega \equiv m(c-\ud(\ell_1+\ell_2))\mod{d} \end{array}  \Big\} $$
soient en bijection.
\end{lemme}

\begin{remarque}
Les ensembles~$\cD_1$ et~$\cD_2$ sont vides si la condition~\eqref{eq:cond-congr-m} n'est pas satisfaite.
\end{remarque}

\begin{proof}
Notons
$$ n_j = \prod_p p^{\nu_j(p)} \qquad (j\in\{1, 2\}), $$
Nous définissons~$c\in\ZZ$, $0\leq c < [n_1, n_2]$ comme l'unique entier satisfaisant pour tout~$p$,
$$ c \equiv \begin{cases} \ell_1 \mod{p^{\nu_1(p)}}, & \text{si }\nu_1(p)\geq \nu_2(p), \\ \ell_2 \mod{p^{\nu_2(p)}} & \text{sinon.} \end{cases} $$
À tout~$v\in\ZZ\cap[0, m\mc{[}$, nous associons~$\Omega(v) = cm+v \in [cm, (m+1)c\mc{[}$. Cette application est bijective, et il nous suffit de vérifier que~$\Omega(\cD_1)=\cD_2$. Supposons~$v\in\cD_1$, et soit~$\Omega = \Omega(v)$. Puisque~$(m, [n_1, n_2])=1$, il suffit de vérifier la congruence~$\Omega^2\equiv D$ modulo~$m$ et~$[n_1, n_2]$, séparément. Nous avons~$\Omega \equiv v\mod{m}$, ce qui fournit bien~$\Omega^2\equiv D\mod{m}$. Pour tout~$p$, nous avons
$$ \Omega \equiv \ell_j m + v \mod{p^{\nu_j(p)}}, $$
avec~$j=1$ si~$\nu_1(p)\geq\nu_2(p)$, et~$j=2$ sinon. Nous obtenons dans les deux cas~$\Omega^2\equiv D \mod{p^{\nu_j(p)}}$, donc~$\Omega^2\equiv D\mod{[n_1, n_2]}$. La condition~$\Omega\equiv  m(c-\ud(\ell_1+\ell_2))\mod{d}$ découle facilement du fait que
$$ (m\ell_1+v)^2 \equiv (m\ell_2+v)^2\mod{(n_1, n_2)}. $$
Supposons ensuite~$\Omega\in\cD_2$ donné, et posons~$v = \Omega - mc$. La relation~$v^2\equiv D\mod{m}$ est alors immédiate. Soit ensuite~$p$ fixé, notons~$\nu_j = \nu_j(p)$ et supposons~$\nu_1\geq \nu_2$ (le cas complémentaire étant traité de façon identique). Nous avons alors
$$ c\equiv \ell_1 \mod{p^{\nu_1}}, \qquad \Omega^2 \equiv D \mod{p^{\nu_1}}, $$
ce qui fournit directement la relation~$(m\ell_1+v)^2\equiv D \mod{p^{\nu_1}}$. Nous avons par ailleurs
$$ (m\ell_2 + v)^2 \equiv \Omega^2 - 2m(\ell_1-\ell_2)\Omega + (m(\ell_1-\ell_2))^2 \mod{p^{\nu_2}}. $$
Par hypothèse, $\Omega^2 \equiv D \mod{p^{\nu_2}}$. Ensuite,
$$ \Omega \equiv m(c-\tfrac12(\ell_1+\ell_2)) \equiv \tfrac12 m (\ell_1-\ell_2) \mod{\tfrac{p^{\nu_2}}{(p^{\nu_2}, \ell_1-\ell_2)}}, $$
ce qui fournit
$$ 2m(\ell_1-\ell_2)  \Omega \equiv (m(\ell_1-\ell_2))^2 \mod{p^{\nu_2}}. $$
Nous en déduisons~$(m\ell_2+v)^2\equiv D \mod{p^{\nu_2}}$. Nous avons donc bien obtenu~$v\in\cD_1$.
\end{proof}

\subsubsection{Méthode de dispersion}

Nous développons le carré dans le membre de gauche de~\eqref{eq:equidistrib}. En accord avec~\cite{Iwaniec}, nous posons
$$ Y(m) := \ssum{N< n \leq 2N \\ (n, m)=1} b_n\frac{\rhoQ(n)}{n}. $$
Fixons aussi une fonction lisse~$f:\RR\to\RR$ telle que~$\1_{1\leq t \leq 2} \leq f(t) \leq \1_{1/2 \leq t \leq 3}$. Enfin, nous rappelons la notation~\eqref{eq:def-rD}. Le membre de gauche de~\eqref{eq:equidistrib} est majoré par
\begin{equation}
\begin{aligned}
{}& \sum_m f\Big(\frac mM\Big)\Big|\ssum{N< n \leq 2N \\ (n, m)=1} b_n r_D(x ; mn) \Big|^2 \\
= {}& \sum_m f\Big(\frac mM\Big)\Big|\ssum{N< n \leq 2N \\ (n, m)=1} b_n \ssum{k\in\NN \\ q | k^2 - D} V\Big(\frac kx\Big) - x{\hat V}(0)\frac{\rhoQ(q)}{q}r_D(x ; mn) \Big|^2  \\
= {}& \sum_m f\Big(\frac mM\Big)\Big|\ssum{0\leq v < m \\ v^2 \equiv D \mod{m}} \Big( \ssum{N< n \leq 2N \\ (n, m)=1} b_n \ssum{k\in\NN \\ k \equiv v \mod{m} \\ k^2 \equiv D \mod{n}} V\Big(\frac kx\Big) - x{\hat V}(0)\frac{\rhoQ(n)}{mn} \Big) \Big|^2 \\
\leq {}& \sum_m f\Big(\frac mM\Big) \rhoQ(m) \ssum{0\leq v < m \\ v^2 \equiv D \mod{m}} \Big|\ssum{N< n \leq 2N \\ (n, m)=1} b_n \ssum{k\in\NN \\ k \equiv v \mod{m} \\ k^2 \equiv D \mod{n}} V\Big(\frac kx\Big) - x{\hat V}(0)\frac{\rhoQ(n)}{mn} \Big|^2 \\
\ll_\ee {}&  x^\ee \big( S_1 - 2x\bar{{\hat V}(0)}\Re S_2 + |x{\hat V}(0)|^2 S_3 \big),\label{eq:dispersion}
\end{aligned}
\end{equation}
avec
$$ S_j = \sum_m f\Big(\frac mM\Big) \ssum{0\leq v < m \\ v^2 \equiv D \mod{m}} T_j(m), $$
et
$$ \begin{aligned} &\quad T_1(m) = \ssumm{N < n_1, n_2 \leq 2N \\ (n_1n_2, m)=1} b_{n_1}\bar{b_{n_2}} \ssumm{k_1, k_2 \in\NN \\ k_j \equiv v \mod{m} \\ k_j^2 \equiv D \mod{n_j}} V\Big(\frac{k_1}x\Big)\bar{V\Big(\frac{k_2}x\Big)}, \\
T_2(m) {}&= \frac{\bar{Y(m)}}{m} \ssum{N < n \leq 2N \\ (n, m)=1} b_n \ssum{k\in\NN \\ k\equiv v\mod{m} \\ k^2 \equiv D\mod{n}} V\Big(\frac kx\Big), \qquad\quad T_3(m) = \Big(\frac{Y(m)}{m}\Big)^2. \end{aligned} $$

\subsubsection{Estimation de~$S_3$}

Nous avons
$$ S_3 = \frac1{M^2}\ssumm{N < n_1, n_2 \leq 2N} b_{n_1}\bar{b_{n_2}} \frac{\rhoQ(n_1)\rhoQ(n_2)}{n_1n_2} \ssum{(m, n_1n_2)=1} \frac{M^2}{m^2}f\Big(\frac mM\Big) \rhoQ(m). $$
Notant~$g_1(t) = t^{-2}f(t)$, la somme en~$m$ du membre de droite vaut
$$ P_{g_1}(M ; 1, n_1n_2, 1, 1, 1, 0, 1). $$
Nous obtenons donc
\begin{equation}
S_3 = P_3 + O_{\ee, D}(x^\ee M^{-\frac32+\theta}),\label{eq:estim-S3}
\end{equation}
avec
$$ P_3 = C_D M^{-1}\Big(\int_\RR t^{-2}f(t)\dd t\Big) \ssumm{N < n_1, n_2 \leq 2N} b_{n_1}\bar{b_{n_2}} A(n_1n_2)\frac{\rhoQ(n_1)\rhoQ(n_2)}{n_1n_2}. $$

\subsubsection{Estimation de~$S_2$}

Nous avons
$$ S_2 = \ssumm{N < n_1, n_2 \leq 2N} b_{n_1}\bar{b_{n_2}} \frac{\rhoQ(n_2)}{n_2} \ssum{(m, n_1n_2)=1} \frac1m f\Big(\frac mM\Big) \ssum{0\leq v < m \\ v^2\equiv D\mod{m}} \ssum{k\in\NN \\ k \equiv v\mod{m} \\ k^2\equiv D \mod{n_1}} V\Big(\frac kx\Big). $$
Nous écrivons~$k=m\ell+v$ avec~$\ell\geq 0$ et $\ell\ll x/m$. Nous avons alors
$$ V\Big(\frac{m\ell+v}x\Big) = V\Big(\frac{m\ell}x\Big) + O\Big(\frac{m}{x}\Big), $$
ce qui fournit, de même que dans~\cite[formule~(11)]{Iwaniec}, l'approximation~$S_2 = S_2' + O(x^\ee)$ avec
$$ S_2' = \ssumm{N < n_1, n_2 \leq 2N} b_{n_1}\bar{b_{n_2}} \frac{\rhoQ(n_2)}{n_2} \ssum{(m, n_1n_2)=1} \frac1m f\Big(\frac mM\Big)\!\!\! \ssumm{\ell\geq 0 \\ 0\leq v < m \\ v^2\equiv D\mod{m} \\ (m\ell+v)^2\equiv D\mod{n_1}} \!\!\! V\Big(\frac{m\ell}x\Big). $$
Les supports de~$f$ et~$V$ impliquent que les entiers~$\ell$ fournissant une contribution non nulle à~$S_2'$ proviennent d'un intervalle d'entiers~$I$ tels que~$\ell\asymp x/M$ pour tout~$\ell\in I$. Pour tout~$n_2$ avec~$\rhoQ(n_2)\neq 0$, nous posons~$n_2'=n_2/(n_2, n_1^\infty)$. Soit~$c\in\NN\cap[0,n_1\mc{[}$ l'unique entier satisfaisant~$c\equiv \ell\mod{n_1}$. Nous avons une bijection
\begin{align*}
\Big\{v \in \NN\cap[0, m\mc{[}:\ {}&\begin{array}{l} v^2\equiv D \mod{m} \\ (m\ell+v)^2 \equiv D \mod{n_1} \end{array} \Big\} \\ &\longrightarrow \Big\{\Omega \in\NN\cap[0, mn_1\mc{[}:\ \begin{array}{l} \Omega^2 \equiv D \mod{mn_1} \\ cm \leq \Omega < (c+1)m \end{array} \Big\}
\end{align*}
donnée par~$v \mapsto mc + v$. Ainsi,
$$ S_2' = \frac1M\ssumm{N < n_1, n_2 \leq 2N} b_{n_1}\bar{b_{n_2}} \frac{\rhoQ(n_2)}{n_2\rhoQ(n_2/(n_2, n_1^\infty))}  \sum_{\ell\in I} \ssum{(m, n_1n_2)=1} g_{2, \ell}\Big(\frac mM\Big) \ssum{\Omega \in \NN \\ \Omega^2 \equiv D \mod{mn_1} \\ cm \leq \Omega < (c+1)m}1. $$
avec~$g_{2, \ell}(t) = t^{-1}f(t) V(t\ell M/x)$, qui satisfait l'hypothèse~\eqref{eq:majo-der-f}. La somme sur~$(m, \Omega)$ est exactement~$P_{g_{2, \ell}}(M ; n_1, n_2', 1, 1, 1, \frac{c}{n_1}, \frac{c+1}{n_1})$, le Lemme~\ref{lemme:type1} fournit donc
$$ S_2' = P_2 + O_{\ee, D}(x^\ee N^{-\frac32-\theta}M^{-\ud+\theta}), $$
avec
$$ P_2 = C_D \ssumm{N < n_1, n_2 \leq 2N} b_{n_1}\bar{b_{n_2}} \frac{\rhoQ(n_1)\rhoQ(n_2)}{n_1n_2}A(n_1n_2) \int t^{-1}f(t)\sum_{\ell\in\ZZ} V\Big(\frac{\ell t M}x\Big)\dd t. $$
Uniformément pour~$t\in\supp f$, nous utilisons
$$ \sum_{\ell\in\ZZ} V\Big(\frac{\ell t M}x\Big) = \frac{x}{Mt}{\hat V}(0) + O(1), $$
ce qui fournit~$P_2 = x{\hat V}(0) P_3 + O_{\ee, D}(x^\ee)$, et finalement
\begin{equation}
S_2 = x{\hat V}(0) P_3 + O_{\ee, D}(x^\ee\{1 + N^{-\frac32-\theta}M^{-\ud+\theta}\}).\label{eq:estim-S2}
\end{equation}

\subsubsection{Estimation de~$S_1$ et conclusion}

Dans la somme~$S_1$, nous posons~$k_j = m\ell_j+v$ avec~$\ell_j\geq 0$, ainsi
$$ S_1 = \ssumm{N< n_1, n_2 \leq 2N} b_{n_1}\bar{b_{n_2}}~\ssumm{\ell_1, \ell_2\geq 0} \ssum{(m, n_1n_2)=1} f\Big(\frac mM\Big) \!\!\! \ssum{0\leq v < m \\ v^2\equiv D \mod{m} \\ (m\ell_j+v)^2\equiv D \mod{n_j}} \!\!\! V\Big(\frac{m\ell_1+v}x\Big)\bar{V\Big(\frac{m\ell_2+v}x\Big)}. $$
Nous remplaçons le produit~$V(\dotsc)\bar{V(\dotsc)}$ par~$V(m\ell_1/x)\bar{V(m\ell_2/x)}$. L'erreur induite dans~$S_1$ est~$O_{\ee, D}(x^{1+\ee})$, de sorte que~$S_1=S_1'+O_{\ee, D}(x^{1+\ee})$, avec
\begin{equation}
\begin{aligned}
S_1' = \ssumm{N< n_1, n_2 \leq 2N} {}& b_{n_1}\bar{b_{n_2}}~\ssumm{\ell_1, \ell_2\geq 0} \ssum{(m, n_1n_2)=1}  f\Big(\frac mM\Big) \times \\
& \times V\Big(\frac{m\ell_1}x\Big)\bar{V\Big(\frac{m\ell_2}x\Big)} \!\!\! \ssum{0\leq v < m \\ v^2\equiv D \mod{m} \\ (m\ell_j+v)^2\equiv D \mod{n_j}} \!\!\! 1
\end{aligned}\label{eq:reexpr-S1}
\end{equation}
Pour chaque~$(n_1, n_2, \ell_1, \ell_2)$, la somme sur~$v$ est exprimée au moyen du Lemme~\ref{lemme:congruence}. Nous posons~$q = [n_1, n_2]$, $d=(n_1,n_2)/(n_1, n_2, \ell_1-\ell_2)$, et
$$ \cL = \{\lambda\mod{d}:\ (\lambda(\ell_1-\ell_2))^2 \equiv 4D \mod{d}\}. $$
Puisque~$(d, 2D)=1$, nous avons~$\cL=\varnothing$ si~$(\ell_1-\ell_2, d)>1$, et~$|\cL|=\rhoQ(d)$ sinon. Nous supposons donc que la somme sur~$(\ell_1, \ell_2)$ est restreinte à~$(\ell_1-\ell_2, d)=1$. La somme sur~$(m, v)$ dans le membre de droite de~\eqref{eq:reexpr-S1} vaut
$$ \sum_{\lambda\in \cL} P_{g_3}\Big(M ; q, d, \lambda, \omega_\lambda, \frac cq, \frac{c+1}q\Big) \qquad (\omega_\lambda = \lambda(c-\tfrac12(\ell_1+\ell_2))), $$
avec~$g_3(t) = f(t)V(t\ell_1 M/x)\bar{V(t\ell_2 M/x)}$. Puisque~$|\cL|=\rhoQ(d)$ et~$\rhoQ(d)\rhoQ(q/(q, d^\infty)) = \rhoQ(q)$, le Lemme~\ref{lemme:type1} fournit
$$ S_1' = P_1 + O_{\ee, D}\Big(x^{1+\ee} + x^{2+\ee}\Big(\frac{N^2}M\Big)^{\tfrac32-\theta}\Big), $$
avec
\begin{equation}
P_1 = C_D M \ssumm{N< n_1, n_2 \leq 2N} b_{n_1}\bar{b_{n_2}} \ssumm{\ell_1, \ell_2\geq 0 \\ (\ell_1-\ell_2, d)=1} \frac{\rhoQ(q)}{q} \frac{A(q)}{\vphi(d)} \int_\RR f(t)V\Big(\frac{t\ell_1 M}x\Big)\bar{V\Big(\frac{t\ell_2 M}x\Big)}\dd t.\label{eq:expr-P1-l1l2}
\end{equation}
Notons temporairement~$n_0 = (n_1, n_2)$. Rappelons que~$d=n_0/(n_0, \ell_1-\ell_2)$. Pour~$X\gg 1$, nous avons
\begin{equation}\label{eq:somme-l1l2}
\begin{aligned}
\ssumm{\ell_1, \ell_2 \in \NN \\ (\ell_1-\ell_2, n_0) = n_0/d \\ (\ell_1-\ell_2, d)=1} V\Big(\frac{\ell_1}X\Big)\bar{V\Big(\frac{\ell_2}X\Big)}
= {}& \1_{(d, n_0/d)=1} \sum_{\ell\in\NN} V\Big(\frac\ell X\Big) \ssum{k\in\ZZ \\ (k, d)=1} \bar{V\Big(\frac{\ell+kn_0/d}X\Big)} \\
= {}& \1_{(d, n_0/d)=1} \Big\{\frac{\vphi(d)}{n_0}|X{\hat V}(0)|^2 + O_{\ee, D}(d^\ee X)\Big\}.
\end{aligned}
\end{equation}
Notons que la propriété~$\rhoQ(p^\nu)=\rhoQ(p)\in\{0, 2\}$ (pour~$p\nmid 2D$, $\nu\geq1$) implique
\begin{equation}
\rhoQ([n_1, n_2])\ssum{d|(n_1, n_2) \\ (d, (n_1, n_2)/d)= 1} 1 = \rhoQ([n_1, n_2])2^{\omega((n_1, n_2))} = \rhoQ(n_1)\rhoQ(n_2).\label{eq:rel-rhon1n2}
\end{equation}
Nous insérons l'estimation~\eqref{eq:somme-l1l2} avec~$X=x/(Mt)$ dans le membre de droite de~\eqref{eq:expr-P1-l1l2} (nous rappelons que l'hypothèse supplémentaire~$M\leq x$ a été justifiée à la section~\ref{sec:premiere-reduction}). Les facteurs~$\vphi(d)$ se compensent, et la relation~\eqref{eq:rel-rhon1n2} nous permet de déduire~$P_1 = P_1' + O(x^{1+\ee})$, avec
$$ P_1' = \frac{|x{\hat V}(0)|^2}{M} C_D \ssumm{N < n_1, n_2 \leq 2N} b_{n_1}\bar{b_{n_2}} \frac{\rhoQ(n_1)\rhoQ(n_2)}{n_1n_2} A(n_1n_2) \int_\RR t^{-2} f(t)\dd t .$$
Nous avons donc~$P_1' = |x {\hat V}(0)|^2 P_3$, et finalement
\begin{equation}
S_1 = |x{\hat V}(0)|^2 P_3 + O_{\ee, D}\Big(x^{1+\ee} + x^{2+\ee} \Big(\frac{N^2}M\Big)^{\frac32-\theta}\Big).\label{eq:estim-S1}
\end{equation}
En insérant les estimations~\eqref{eq:estim-S3}, \eqref{eq:estim-S2}, et~\eqref{eq:estim-S1} dans~\eqref{eq:dispersion}, nous obtenons l'estimation annoncée~\eqref{eq:equidistrib}. Cela conclut la preuve de la Proposition~\ref{prop:equidistrib}

\subsection{Démonstration du \texorpdfstring{Théorème~\ref{th:DI-P}}{Théorème DI-P}}\label{sec:DI-P}

Nous déduisons dans cette section le Théorème~\ref{th:DI-P}, à partir de la majoration~\eqref{eq:somme-expo-moyqh}. Nous suivons les arguments et les notations des sections~4 et~5 de~\cite{DI-Pplus}. Nous considérons
$$ R_H(x, P, D) = \sum_{D<d\leq 2D} \lambda_d \sum_{0<|h|\leq H} \sum_{m\equiv 0\mod{d}} \frac{C(m)\log m}{m} \sum_{\nu^2 \equiv D \mod{m}}{\hat b}\Big(\frac hm\Big) \e\Big(-\frac{h\nu}{m}\Big), $$
où~$D\leq x^{\frac12}$, $P\in[x, x^2]$, $\eta>0$ est arbitraire, $H=Px^{-1+\eta}$, $b$ est une fonction lisse à support compact inclus dans~$[x, 2x]$, telle que~$\|b^{(j)}\|_{\infty}\ll_j x^{-j}$, $C$ est une fonction lisse à support compact inclus dans~$[P, 4P]$, telle que~$\|C^{(j)}\|_{\infty}\ll P^{-j}$, et~$(\lambda_d)$ une suite de coefficients avec~$|\lambda_d|\leq 1$. Nous insérons la définition
$$ \frac1m{\hat b}\Big(\frac hm\Big) = \int_{\RR} \e(-ht) b(mt)\dd t. $$
Posant~$M = P/D$ et~$f_{d, t}(v) = C(Mvd)\log(Mvd)b(Mvdt)$, nous obtenons
$$ |R_H(x, P, D)| \ll xP^{-1} \sup_{|t|\in[x/(4P), 2x/P]} \sum_{D<d\leq 2D} \Big| \sum_{0<|h|\leq H} \e(th) \sum_m f_{d, t}\Big(\frac{m}{M}\Big) \sum_{\nu^2 \equiv D \mod{m}} \e\Big(-\frac{h\nu}{md}\Big)\Big|. $$
Nous avons bien~$\|f_{d, t}^{(j)}\|_\infty \ll_j 1$, $D\ll M$ et $H\ll MD$. Nous sommes donc en mesure d'appliquer la majoration~\eqref{eq:somme-expo-moyqh} à chaque sous-somme dyadique~$H_1<h\leq 2H_1$, pour~$\ud\leq H_1 \leq H$. Nous obtenons
\begin{align*}
R_H(x, P, D) {}& \ll x^{1+\ee+O(\eta)}P^{-1}D \sup_{\ud\leq H_1 \leq H} H_1 \big\{H_1 + M^{\ud} + H_1^{-\ud} D^{\ud-\theta} M^{\ud+\theta}\big\} \\
{}& \ll x^{\ee+O(\eta)}\big\{x^{-1}DP + (DP)^{\ud} + x^{\ud} P^\theta D^{1-2\theta}\big\}.
\end{align*}
Cela est~$O(x^{1-\eta})$ si~$D\leq x^{-K\eta} \min\{x^2P^{-1}, x^{1/(2-4\theta)}P^{-\theta/(1-2\theta)}\}$ et~$K$ est une constante suffisamment grande. Cette majoration de~$D$, en conjonction avec les arguments de la section 8 de~\cite{DI-Pplus}, fournit le résultat annoncé.

\subsection{Démonstration du \texorpdfstring{Lemme~\ref{lemme:pointe}}{Lemme pointe}}\label{sec:demo-pointes}

\begin{proof}
Écrivons~$\sigma \equiv \left(\begin{smallmatrix} u & \ast \\ v & r \end{smallmatrix}\right)$ avec~$r\in\ZZ$. Les classes~$\Gamma_0(qd)\backslash\Gamma$ sont en bijection avec~$\PP^1(\ZZ/qd\ZZ)$, la correspondance étant donnée par~$\sigma\mapsto [v:r]$. La condition~$q|C(\sigma Q)$ correspond alors à~$q|Q(v, r)$.

La relation~$q|C(\sigma Q) = Q(v, r)$ implique~$v | Q(v, r)^2$. Cependant, on a la relation~$Q(v, r)\equiv Q(0, 1)r^2 \mod{v}$ et~$(r, v)=1$, de sorte que finalement~$v|Q(0, 1)^2$.

Le calcul explicite de~$\cC(\infty, \ca)$ et de~$S_{\infty\ca}(h, n ; \gamma)$ est un calcul élémentaire similaire à la section 2.2 de Deshouillers-Iwaniec~\cite{DI}. Nous omettons les détails. La majoration~\eqref{eq:majotriv-kloosterman} en découle par l'inégalité triangulaire, et en notant que la condition~$\alpha\delta \equiv u\mod{vm}$ détermine~$\alpha\mod{vm/(u, m)}$.

Pour la preuve de~\eqref{eq:majogauss-kloosterman}, nous utilisons le théorème des restes chinois. Soit~$p$ un nombre premier, et notons
$$ p^\mu \| m, \quad p^\lambda \| u, \quad p^\nu \| v \quad p^{\nu'}\| v', \quad p^\Delta \| d. $$
Nos hypothèses~$(v, u)=(v', m)=1$ impliquent donc
$$ \mu>0\Rightarrow \nu'=0, \quad \lambda>0\Rightarrow \nu=0, \quad \Delta \leq \max\{\nu, \nu'\}. $$
Le théorème des restes chinois montre que l'on peut se ramener à établir la majoration
\begin{equation}
S_p(h) := \ssum{\alpha \mod{p^{\nu+\mu}} \\ \delta \mod{p^{\lambda+\mu+\max\{\nu, \nu'\}}} \\ \delta\equiv m\mod{p^{\lambda+\nu'}} \\ (\delta-m, p^{\lambda+\mu}) = p^\lambda \\ \alpha\delta \equiv u\mod{p^{\nu+\mu}}} \chi_p(\delta) \e\Big(\frac{h\alpha}{p^{\nu+\mu}}\Big) \ll_Q (p^\Delta h, p^\mu),\label{eq:majo-Sgauss-primpow}
\end{equation}
où~$\chi_p$ est un caractère modulo~$p^\Delta$. Le changement de variables~$\delta\gets m+\delta p^{\lambda+\nu'}$ transforme le membre de gauche en
$$ S_p(h) = \ssum{\alpha \mod{p^{\nu+\mu}} \\ \delta \mod{p^{\mu+\max\{\nu-\nu', 0\}}} \\ (\delta, p^{\mu}) = 1 \\ \alpha(m+\delta p^{\lambda+\nu'}) \equiv u\mod{p^{\nu+\mu}}} \chi_p(m+\delta p^{\lambda+\nu'}) \e\Big(\frac{h\alpha}{p^{\nu+\mu}}\Big). $$

Nous écartons d'abord le cas~$\mu\leq\max\{\lambda, \nu\}$, en nous contentant la borne triviale
$$ S_p(h) \ll_Q 1, $$
qui découle du fait que~$u, v \ll_Q 1$.

Supposons ensuite~$\mu>\max\{\lambda, \nu\}\geq 0$, en particulier,~$\nu'=0$. Considérons d'abord le cas~$\nu=0$. Dans ce cas~$\Delta = 0$, donc le caractère est trivial et notre somme se simplifie en
$$ S_p(h) = \ssum{\alpha\mod{p^\mu} \\ (\alpha, p)=1} \e\Big(\frac{h\alpha}{p^\mu}\Big) \ssum{\delta\mod{p^\mu} \\ \alpha\delta\equiv u/p^\lambda \mod{p^{\mu-\lambda}}}1 = p^\lambda c_{p^\mu}(h), $$
où~$c_r(h) = \sum_{b\mod{r}, (b, r)=1}\e(hb/r)$ est la somme de Ramanujan. Nous obtenons donc
$$ |S_p(h)| \leq p^\lambda (h, p^\mu). $$
Considérons ensuite le cas~$\nu>0$. Cela implique~$\lambda=0$ et~$\Delta\leq \nu$, et donc
$$ S_p(h) = \ssum{\alpha\mod{p^{\nu+\mu}} \\ \delta\mod{p^{\nu+\mu}} \\ (\delta, p)=1 \\ \alpha(m+\delta)\equiv u\mod{p^{\nu+\mu}}} \chi(m+\delta)\e\Big(\frac{h\alpha}{p^{\nu+\mu}}\Big) = \chi(u)\ssum{\alpha\mod{p^{\nu+\mu}}\\(\alpha, p)=1} \chi(\bar{\alpha})\e\Big(\frac{h\alpha}{p^{\nu+\mu}}\Big) $$
qui est une somme de Gauss (\textit{c.f.}~\cite[lemme~3.2]{IK}). Nous avons ainsi
$$ |S_p(h)| \leq 2(p^\Delta h, p^{\nu+\mu}). $$
Nous obtenons dans tous les cas de figure la borne~\eqref{eq:majo-Sgauss-primpow}, ce qui conclut la preuve.
\end{proof}

\providecommand{\bysame}{\leavevmode\hbox to3em{\hrulefill}\thinspace}
\providecommand{\MR}{\relax\ifhmode\unskip\space\fi MR }
\providecommand{\MRhref}[2]{%
  \href{http://www.ams.org/mathscinet-getitem?mr=#1}{#2}
}
\providecommand{\href}[2]{#2}


\begin{thebibliography}{DKDL13}

\bibitem[BBDT12]{BBDT}
A.~Balog, V.~Blomer, C.~Dartyge, et G.~Tenenbaum, \emph{Friable values of
  binary forms}, Comment. Math. Helv. \textbf{87} (2012), no.~3, 639--667.

\bibitem[BET90]{BET}
A.~Balog, P.~Erd\H{o}s, et G.~Tenenbaum, \emph{On arithmetic functions
  involving consecutive divisors}, Analytic number theory ({A}llerton {P}ark,
  {IL}, 1989), Progr. Math., vol.~85, Birkh\"auser Boston, Boston, MA, 1990,
  pp.~77--90.

\bibitem[BW98]{BW}
Antal Balog et Trevor~D. Wooley, \emph{On strings of consecutive integers with
  no large prime factors}, J. Austral. Math. Soc. Ser. A \textbf{64} (1998),
  no.~2, 266--276.

\bibitem[Buc37]{Buchstab}
A.~Buchstab, \emph{Asymptotische {A}bsch\"{a}tzung einer allgemeinen
  zahlentheoretischen {F}unktion}, Rec. Math. [Mat. Sbornik] N.S. \textbf{2}
  (1937), no.~44, 1239--1246.

\bibitem[Dar15]{Dartyge}
C.~Dartyge, \emph{Le probl\`eme de {T}ch\'ebychev pour le douzi\`eme polyn\^ome
  cyclotomique}, Proc. Lond. Math. Soc. (3) \textbf{111} (2015), no.~1, 1--62.

\bibitem[DMT01]{DMT}
C.~Dartyge, G.~Martin, et G.~Tenenbaum, \emph{Polynomial values free of large
  prime factors}, Period. Math. Hungar. \textbf{43} (2001), no.~1-2, 111--119.

\bibitem[DKDL13]{DKDL}
J.-M. De~Koninck, N.~Doyon, et F.~Luca, \emph{Consecutive integers divisible by
  the square of their largest prime factors}, J. Comb. Number Theory \textbf{5}
  (2013), no.~2, 81--93.

\bibitem[dlB98]{LB98}
R.~de~la Bret\`{e}che, \emph{Sommes d'exponentielles et entiers sans grand
  facteur premier}, Proc. London Math. Soc. \textbf{77} (1998), no.~1, 39--78.

\bibitem[dlBT05]{BT05}
R.~de~la Bret{\`e}che et G.~Tenenbaum, \emph{Propri\'et\'es statistiques des
  entiers friables}, Ramanujan J. \textbf{9} (2005), no.~1-2, 139--202.

\bibitem[DI82]{DI-Pplus}
J.-M. Deshouillers et H.~Iwaniec, \emph{On the greatest prime factor of
  {$n^{2}+1$}}, Ann. Inst. Fourier (Grenoble) \textbf{32} (1982), no.~4, 1--11.

\bibitem[DI83]{DI}
\bysame, \emph{Kloosterman sums and {F}ourier coefficients of cusp forms},
  Invent. Math. \textbf{70} (1982/83), no.~2, 219--288.

\bibitem[Dra14]{D-fr}
S.~Drappeau, \emph{Propri\'et\'es multiplicatives des entiers friables
  translat\'es}, Colloq. Math. \textbf{137} (2014), no.~2, 149--164.

\bibitem[Dra17]{D-Kuz}
\bysame, \emph{Sums of {Kloosterman} sums in arithmetic progressions, and the
  error term in the dispersion method},  Proc. Lond. Math. Soc. (3) \textbf{114} (2017), no. 4, 684--732.

\bibitem[DFI95]{DFIWeyl}
W.~Duke, J.~B. Friedlander, et H.~Iwaniec, \emph{Equidistribution of roots of a
  quadratic congruence to prime moduli}, Ann. of Math. (2) \textbf{141} (1995),
  no.~2, 423--441.

\bibitem[FI83]{FoIw}
{\'E}.~Fouvry et H.~Iwaniec, \emph{Primes in arithmetic progressions}, Acta
  Arith. \textbf{42} (1983), no.~2, 197--218.

\bibitem[FT91]{FT}
{\'E}.~Fouvry et G.~Tenenbaum, \emph{Entiers sans grand facteur premier en
  progressions arithmetiques}, Proc. London Math. Soc. (3) \textbf{63} (1991),
  no.~3, 449--494.

\bibitem[FI78]{FI}
J.~Friedlander et H.~Iwaniec, \emph{On {B}ombieri's asymptotic sieve}, Ann.
  Scuola Norm. Sup. Pisa Cl. Sci. (4) \textbf{5} (1978), no.~4, 719--756.

\bibitem[FI10]{FI-OC} J.~Friedlander et H.~Iwaniec, \emph{Opera de cribro}, American Mathematical Society, Providence, RI, 2010, 57, xx+527.

\bibitem[Gou17]{G16}
\'E. Goudout, \emph{Lois locales de la fonction $\omega$ dans presque tous les
  petits intervalles}, Proc. Lond. Math. Soc. (3) \textbf{115} (2017), no.~3, 599--637.

\bibitem[Gra08]{Granville}
A.~Granville, \emph{Smooth numbers: computational number theory and beyond},
  Algorithmic number theory: lattices, number fields, curves and cryptography,
  Math. Sci. Res. Inst. Publ., vol.~44, Cambridge Univ. Press, Cambridge, 2008,
  pp.~267--323.

\bibitem[GoY06]{GY-Sunits}
K.~Gy\H{o}ry et K.~Yu, \emph{Bounds for the solutions of {$S$}-unit equations
  and decomposable form equations}, Acta Arith. \textbf{123} (2006), no.~1,
  9--41.

\bibitem[HR74]{HR} H.~Halberstam et H.-E.~Richert, \emph{Sieve methods}, Academic Press, London-New York, 1974, xiv+364 pp.

\bibitem[Har12]{Harper}
A.~J. Harper, \emph{{Bombieri-Vinogradov} and {Barban-Davenport-Halberstam}
  type theorems for smooth numbers}, pr\'epublication, 2012.

\bibitem[Hen12]{Henriot}
K.~Henriot, \emph{Nair-{T}enenbaum bounds uniform with respect to the
  discriminant}, Math. Proc. Cambridge Philos. Soc. \textbf{152} (2012), no.~3,
  405--424.

\bibitem[Hil85a]{Hildebrand-Short}
A.~Hildebrand, \emph{Integers free of large prime divisors in short intervals},
  Quart. J. Math. Oxford Ser. (2) \textbf{36} (1985), no.~141, 57--69.

\bibitem[Hil85b]{Hild-Balogs}
\bysame, \emph{On a conjecture of {B}alog}, Proc. Amer. Math. Soc. \textbf{95}
  (1985), no.~4, 517--523.

\bibitem[Hil86]{Hild86}
\bysame, \emph{On the number of positive integers $\leq x$ and free of prime
  factors $>y$}, J. Number Theory \textbf{22} (1986), no.~3, 289--307.

\bibitem[HT86]{HT}
A.~Hildebrand et G.~Tenenbaum, \emph{On integers free of large prime factors},
  Trans. Amer. Math. Soc. \textbf{296} (1986), no.~1, 265--290.

\bibitem[Hoo63]{HooleyQuad}
C.~Hooley, \emph{On the number of divisors of a quadratic polynomial}, Acta
  Math. \textbf{110} (1963), 97--114.

\bibitem[Hoo67]{Hooley-Pplus}
\bysame, \emph{On the greatest prime factor of a quadratic polynomial}, Acta
  Math. \textbf{117} (1967), 281--299.

\bibitem[Iwa78]{Iwaniec}
H.~Iwaniec, \emph{Almost-primes represented by quadratic polynomials}, Invent.
  Math. \textbf{47} (1978), no.~2, 171--188.

\bibitem[Iwa97]{Iwaniec-Topics}
\bysame, \emph{Topics in classical automorphic forms}, Graduate Studies in
  Mathematics, vol.~17, American Mathematical Society, Providence, RI, 1997.

\bibitem[IK04]{IK}
H.~Iwaniec et E.~Kowalski, \emph{Analytic number theory}, American Mathematical
  Society Colloquium Publications, vol.~53, American Mathematical Society,
  Providence, RI, 2004.

\bibitem[Khm64]{Khmyrova}
N.~A. Khmyrova, \emph{On polynomials with small prime divisors}, Dokl. Akad.
  Nauk SSSR \textbf{155} (1964), 1268--1271.

\bibitem[Kim03]{Kim}
H.~H. Kim, \emph{Functoriality for the exterior square of {${\rm GL}_4$} and
  the symmetric fourth of {${\rm GL}_2$}}, J. Amer. Math. Soc. \textbf{16}
  (2003), no.~1, 139--183, With appendix 1 by D. Ramakrishnan and appendix 2 by
  K. and P. Sarnak.

\bibitem[Kow04]{Kow}
E.~Kowalski, \emph{Un cours de th\'{e}orie analytique des nombres}, Cours
  sp\'{e}cialis\'{e}s, collection SMF, 2004.

\bibitem[Lac15]{Lachand2}
\bysame, \emph{Valeurs friables d'une forme quadratique et d'une forme
  lin\'eaire}, Q. J. Math. \textbf{66} (2015), no.~1, 225--244.

\bibitem[Lac18]{Lachand3}
A.~Lachand, \emph{{Fonctions arithm{\'e}tiques et formes binaires
  irr{\'e}ductibles de degr{\'e} $3$}},  Ann. Inst. Fourier (Grenoble) \textbf{68} (2018), no. 3, 1297--1363.

\bibitem[LV18]{LV}
J.~D. Lee et R.~Venkatesan, \emph{Rigorous analysis of a randomised number
  field sieve}, J. Number Theory \textbf{187} (2018), 92--159.

\bibitem[LO12]{RJLO}
R.~J. Lemke~Oliver, \emph{Almost-primes represented by quadratic polynomials},
  Acta Arith. \textbf{151} (2012), no.~3, 241--261.

\bibitem[Mar02]{Martin}
G.~Martin, \emph{An asymptotic formula for the number of smooth values of a
  polynomial}, J. Number Theory \textbf{93} (2002), no.~2, 108--182.

\bibitem[Sai89]{Saias}
{\'E}.~Saias, \emph{Sur le nombre des entiers sans grand facteur premier}, J.
  Number Theory \textbf{32} (1989), no.~1, 78--99.

\bibitem[Sch68]{Schinzel}
A.~Schinzel, \emph{On two theorems of {G}elfond and some of their
  applications}, Acta Arith. \textbf{13} (1967/1968), 177--236.

\bibitem[Ten90a]{Tenenbaum-ES}
G.~Tenenbaum, \emph{Sur une question d'{E}rd{\H o}s et {S}chinzel}, A tribute
  to {P}aul {E}rd{\H o}s, Cambridge Univ. Press, Cambridge, 1990, pp.~405--443.

\bibitem[Ten90b]{Tenenbaum-ES2} G.~Tenenbaum, \emph{Sur une question d'Erd\H{o}s et Schinzel. II}, Invent. Math. (1990), no. 99, 215--224.

\bibitem[Ten15]{ITAN}
\bysame, \emph{Introduction \`{a} la th\'{e}orie analytique et probabiliste des
  nombres}, 4\ieme{} ed., \'Echelles, Belin, 2015, (English translation:
  Graduate Studies in Mathematics 163, Amer. Math. Soc. 2015).

\bibitem[Tim77]{Timofeev}
N.~M. Timofeev, \emph{Polynomials with small prime divisors}, Ta\v skent. Gos.
  Univ. Nau\v cn. Trudy (1977), no.~548 Voprosy Mat., 87--91, 145.

\bibitem[Top17]{Topacogullari}
B.~Topacogullari, \emph{On a certain additive divisor problem}, Acta Arith. \textbf{181} (2017), no. 2, 143--172.

\bibitem[T{\'o}t00]{Toth}
{\'A}.~T{\'o}th, \emph{Roots of quadratic congruences}, Internat. Math. Res.
  Notices (2000), no.~14, 719--739.

\bibitem[Vau89]{Vaughan}
R.~C. Vaughan, \emph{A new iterative method in {W}aring's problem}, Acta Math.
  \textbf{162} (1989), no.~1-2, 1--71.

\end{thebibliography}
\end{document}